\documentclass[english]{amsproc}
\usepackage[T2A]{fontenc}
\usepackage[english]{babel}
\usepackage{graphics}
\usepackage{amsfonts, amssymb, amscd}
\usepackage[dvips]{graphicx}
\usepackage{latexsym}

\oddsidemargin=30pt \topmargin=0pt \textwidth=14cm

\newcommand{\ve}[1]{\bar{#1}}

\DeclareMathOperator{\link}{link}
 \DeclareMathOperator{\pt}{pt}

 \DeclareMathOperator{\Ob}{Ob}
\DeclareMathOperator{\colim}{colim}
\DeclareMathOperator{\hocolim}{hocolim}
\DeclareMathOperator{\Tor}{Tor} \DeclareMathOperator{\rank}{rank}
\DeclareMathOperator{\Hilb}{Hilb}

\newcommand{\ST}[1]{\mbox{\upshape\footnotesize #1}}

\newcommand{\Zo}{\mathbb{Z}}
\newcommand{\Ro}{\mathbb{R}}

\newcommand{\ko}{\Bbbk}
\newcommand{\Rg}{\Ro_{\geqslant}}

\newcommand{\bigast}{\divideontimes}
\newcommand{\Z}{\mathcal{Z}}
\newcommand{\F}{\mathcal{F}}
\newcommand{\zer}{\hat{\sigma}}
\newcommand{\Ks}{\underline{K_{\alpha}}}
\newcommand{\Ps}{\underline{P_{\alpha}}}

\newcommand{\Hr}{\tilde{H}}
\newcommand{\bet}{b}
\newcommand{\bett}{\tilde{b}}
\newcommand{\betar}{\tilde{\beta}}
\newcommand{\ts}{\bar{t}}

\newcommand{\cat}{\ST{CAT}}
\newcommand{\Top}{\ST{TOP}}

\newcounter{stmcounter}[section]

\newcounter{defcounter}[section]

\numberwithin{equation}{section}

\renewcommand{\thestmcounter}{\thesection.\arabic{stmcounter}}

\newcommand{\ex}{\par\vspace{0.5 cm}\noindent\refstepcounter{stmcounter}\textsc{Example \thestmcounter.}\quad}
\newcommand{\rem}{\par\vspace{0.5 cm}\noindent\refstepcounter{stmcounter}\textsc{Remark \thestmcounter.}\quad}

\newtheorem{obser}[stmcounter]{Observation}
\newtheorem{cor}[stmcounter]{Corollary}
\newtheorem{stm}[stmcounter]{Statement}
\newtheorem{thm}[stmcounter]{Theorem}
\newtheorem{prop}[stmcounter]{Proposition}
\newtheorem{lemma}[stmcounter]{Lemma}
\newtheorem{defin}[stmcounter]{Definition}

\begin{document}

\title{Composition of simplicial complexes, polytopes and multigraded Betti numbers\quad}
\author{Ayzenberg Anton}
%\address{Механико-математический факультет, МГУ им. М. В. Ломоносова, Москва, Россия}
\email{ayzenberga@gmail.com}

\begin{abstract}
For a simplicial complex $K$ on $m$ vertices and simplicial
complexes $K_1,\ldots,K_m$ a composed simplicial complex
$K(K_1,\ldots,K_m)$ is introduced. This construction generalizes
an iterated simplicial wedge construction studied by A. Bahri, M.
Bendersky, F. R. Cohen and S. Gitler and allows to describe the
combinatorics of generalized joins of polytopes
$P(P_1,\ldots,P_m)$ defined by G. Agnarsson in most important
cases. The composition defines a structure of an operad on a set
of finite simplicial complexes, in which a complex on $m$ vertices
is viewed as an $m$-adic operation. We prove the following: (1) a
composed complex $K(K_1,\ldots,K_m)$ is a simplicial sphere iff
$K$ is a simplicial sphere and $K_i$ are the boundaries of
simplices; (2) a class of spherical nerve-complexes is closed
under the operation of composition (3) finally, we express
multigraded Betti numbers of $K(K_1,\ldots,K_m)$ in terms of
multigraded Betti numbers of $K, K_1,\ldots,K_m$ using a
composition of generating functions.
\end{abstract}

\maketitle

\section{Introduction}\label{sectIntro}

In toric topology multiple connections between convex polytopes,
simplicial complexes, topological spaces and Stanley--Reisner
algebras are studied. Starting with a simple polytope $P$ one
constructs a moment-angle manifold $\Z_P$ with a torus action such
that its orbit space is the polytope $P$ itself. On the other
hand, a simplicial complex $\partial P^*$ gives rise to a
moment-angle complex $\Z_{\partial P^*}(D^2,S^1)$. This complex is
homeomorphic to $\Z_P$ and possesses a natural cellular structure
which allows to describe its cohomology ring:
$H^*(\Z_P;\ko)\cong\Tor^{*,*}_{\ko[m]}(\ko[\partial P^*],\ko)$.
This consideration can be used to translate topological problems
to the language of Stanley--Reisner algebras and vice-versa.
Moreover, the cohomology ring $H^*(\Z_P;\ko)$ carries an
information about the combinatorics of the polytope $P$ from which
we started.

With some modifications this setting can be generalized to
nonsimple polytopes. If $P$ is a convex polytope (possibly
nonsimple), then the moment-angle space $\Z_P$ is defined as an
intersection of real quadrics (but in nonsimple case $\Z_P$ is not
a manifold). A simplicial complex $K_P$, called the nerve-complex
\cite{AB}, is associated to each polytope (in nonsimple case $K_P$
is not a simplicial sphere). The complex $K_P$ carries a complete
information on the combinatorics of $P$ and its properties are
similar to simplicial spheres. Generally there is a homotopy
equivalence $\Z_P\simeq \Z_{K_P}(D^2,S^1)$. An open question is to
describe the properties of Stanley--Reisner algebras $\ko[K_P]$
and cohomology rings
$H^*(\Z_P;\ko)\cong\Tor^{*,*}_{\ko[m]}(\ko[K_P],\ko)$ for
nonsimple convex polytopes.

In the work of A.\,Bahri, M.\,Bendersky, F.\,R.\,Cohen and
S.\,Gitler \cite{BBCGit} a new construction is described, which
allows to build a simple polytope $P(l_1,\ldots,l_m)$ from a given
simple polytope $P$ with $m$ facets and an array
$(l_1,\ldots,l_m)$ of natural numbers. A simplicial complex
$\partial P(l_1,\ldots,l_m)^*$ can be described combinatorially in
terms of missing faces. Such description gives a representation of
$\Z_{\partial P(l_1,\ldots,l_m)^*}(D^2,S^1)$ as a polyhedral
product $\Z_{P^*}(\underline{(D^{2l_i},S^{2l_i-1})})$ which leads,
in particular, to alternative representation of the cohomology
ring $H^*(\Z_{P(l_1,\ldots,l_m)})$.

The idea of treating nonsimple polytopes can be used to capture a
wider class of examples and find more general constructions. One
of the constructions is known in convex geometry (we refer to the
work of Geir Agnarsson \cite{Agn}). Given a polytope $P\subset
\Rg^m$ and polytopes $P_1,\ldots,P_m$ a new polytope
$P(P_1,\ldots,P_m)$ is constructed. This polytope generally
depends on geometrical representation of $P\subset \Rg^m$, but
under some restrictions the construction can be made
combinatorial. In particular cases this construction gives the
iterated polytope $P(l_1,\ldots,l_m)$ from the work \cite{BBCGit}.
Note, that the polytope $P(P_1,\ldots,P_m)$ may be nonsimple even
in the case when all the polytopes $P,P_1,\ldots,P_m$ are simple.

In this work we introduce a new operation on the set of abstract
simplicial complexes $K,K_1,\ldots,K_m \mapsto K(K_1,\ldots,K_m)$.
This operation corresponds to the operation $P(P_1,\ldots,P_m)$ on
convex polytopes and generalizes the constructions of
\cite{BBCGit}. The work is organized as follows:
\begin{enumerate}
\item We review the construction of $K_P$ and the definition of
abstract spherical nerve-complex from the work \cite{AB}. Section
\ref{sectPrelim}.

\item The construction of $P(P_1,\ldots,P_m)$. We give a few
equivalent descriptions of this polytope and specialize the
conditions under which $P(P_1,\ldots,P_m)$ is well defined on
combinatorial polytopes. Section \ref{sectCompPolytopes}.

\item Given a simplicial complex $K$ on $m$ vertices and
simplicial complexes $K_1,\ldots,K_m$ we define a composed
simplicial complex $K(K_1,\ldots,K_m)$, which is a central object
of the work. Two equivalent definitions are provided: one is
combinatorial, another describes $K(K_1,\ldots,K_m)$ as an
analogue of polyhedral product called polyhedral join. It is shown
that $K(\partial\Delta_{[l_1]},\ldots,\partial\Delta_{[l_m]}) =
K(l_1,\ldots,l_m)$ --- an iterated simplicial wedge construction
from the work \cite{BBCGit}. We prove that $K_{P(P_1,\ldots,P_m)}
= K_P(K_{P_1},\ldots,K_{P_m})$. Section \ref{sectCompSimpComp}.

\item Polyhedral products defined by composed simplicial
complexes. In section \ref{sectPolyProducts} we review and
generalize some results from \cite{BBCGit}.

\item In section \ref{sectCompHomotopySpheres} the
structure of composed simplicial complexes is studied. At first we
describe the homotopy type of $K(K_1,\ldots,K_m)$. It happens that
$K(K_1,\ldots,K_m)\simeq K\ast K_1\ast\ldots\ast K_m$. The
problem: for which choice of $K,K_1,\ldots,K_m$ the complex
$K(K_1,\ldots,K_m)$ is a sphere? The answer: only in the case,
when $K$ is a sphere and $K_i =
\partial\Delta_{[l_i]}$. Thus the class of simplicial spheres is
not closed under the composition. Nevertheless, if
$K,K_1,\ldots,K_m$ are spherical nerve-complexes, then so is
$K(K_1,\ldots,K_m)$.

\item In section \ref{sectMultBettiNums} we describe the
multigraded Betti numbers of $K(K_1,\ldots,K_m)$. There is a
simple formula which expresses these numbers in terms of
multigraded Betti numbers of $K, K_1,\ldots,K_m$. Applying this
formula to $\partial\Delta_{[2]}(K_1,K_2)$ and $o^2(K_1,K_2)$,
where $o^2$ is the complex with $2$ ghost vertices, gives the
result of \cite{AB}. Using the connection between bigraded Betti
numbers and $h$-polynomial, found by V.M.Buch\-sta\-ber and
T.E.Panov \cite{BP}, in section \ref{sectEnumPolynomials} we
provide formulas for $h$-polynomials of compositions in some
particular cases. Some of these formulas were found earlier by
Yu.Ustonovsky \cite{Ust}.

\end{enumerate}

The following notation and conventions are used. The simplicial
complex $K$ on a set of vertices $[m]$ is the system of subsets
$K\subseteq 2^{[m]}$, such that $I\in K$ and $J\subset I$ implies
$J\in K$. A vertex $i\in[m]$ such that $\{i\}\notin K$ is called
ghost vertex. If $I\in K$, then $\link_KI$ is the simplicial
complex on a set $[m]\setminus I$ such that $J\in \link_KI
\Leftrightarrow J\sqcup I\in K$. Note that a link may have ghost
vertices even if $K$ does not have them. From the geometrical
point of view the complex does not change when ghost vertices are
omitted. We use the same symbol for the simplicial complex $K$ and
its geometrical realization. The complex $K$ is called a
simplicial sphere if it is PL-homeomorphic to the boundary of a
simplex (we omit ghost vertices if necessary). Simplicial complex
$K$ is called a generalized homological sphere (or Gorenstein*
complex) if $K$ and all its links have homology of spheres of
corresponding dimensions. If $K$ is a simplicial sphere (resp.
Gorenstein* complex) then so is $\link_KI$ for each $I\in K$.

If $A\subset [m]$, then full subcomplex $K_A$ is the complex on
$A$ such that $J\in K_A \Leftrightarrow J\in K$. We denote the
full simplex on the set $[m]$ by $\Delta_{[m]}$, it has dimension
$m-1$. Its boundary $\partial \Delta_{[m]}$ --- is complex on
$[m]$, consisting of all proper subsets of $[m]$.

The notation $\ve{x} = (x_1,\ldots,x_m)\in \Ro^m$ is used for
arrays of numbers, and $\langle \ve{x},\ve{y}\rangle$ denotes the
sum $x_1y_1+x_2y_2+\ldots+x_my_m$. Sometimes double arrays will be
used: $\ve{x} = (\ve{x}_1,\ldots,\ve{x}_m) =
(x_{11},\ldots,x_{1l_1},\ldots,x_{m1},\ldots,x_{ml_m})$.

I wish to thank Anthony Bahri for the private discussion in which
he explained the geometrical meaning of the simplicial wedge
construction and for his comments on the subject of this work. I
am also grateful to Nickolai Erokhovets for paying my attention to
the work of Geir Agnarsson \cite{Agn}.

\section{Polytopes and nerve-complexes}\label{sectPrelim}

Let $P$ be an $n$-dimensional polytope and let
$\{\F_1,\ldots,\F_m\}$ be the set of all its facets. Consider a
simplicial complex $K_P$ on the set $[m]=\{1,\ldots,m\}$ called
the nerve-complex of a polytope $P$, defined by the condition
$I=\{i_1,\ldots,i_k\}\in K_P$ whenever
$\F_{i_1}\cap\ldots\cap\F_{i_k}\neq\varnothing$. The complex $K_P$
is thus the nerve of the closed covering of the boundary $\partial
P$ by facets.

\ex\label{examplPsimpleKPsphere} If $P$ is simple, then $K_P$
coincides with a boundary of a dual simplicial polytope: $K_P =
\partial P^*$. In this case $K_P$ is a simplicial sphere. It can
be shown that $K_P$ is not a sphere if $P$ is not simple.
\\
\\
As shown in \cite{AB} nerve-complexes are nice substitutes for
nonsimple polytopes. In particular, the moment-angle space $\Z_P$
of any convex polytope $P$ is homotopy equivalent to the
moment-angle complex $\Z_{K_P}(D^2,S^1)$, the Buchstaber numbers
$s(P)$ and $s(K_P)$ are equal, etc.

There are necessary conditions on the complex $K$ to be the
nerve-complex of some convex polytope. These conditions are
gathered in the notion of a \textbf{spherical nerve-complex}.

Let $K$ be a simplicial complex, $M(K)$ --- the set of its maximal
(under inclusion) simplices. Let $F(K) = \{I\in K\mid I=\cap J_i$,
where $J_i\in M(K)\}$. The set $F(K)$ is partially ordered by
inclusion. It can be shown (see \cite{AB}) that for each simplex
$I\notin F(K)$ the complex $\link_KI$ is contractible.

\begin{defin}[Spherical nerve-complex]
Simplicial complex $K$ is called a spherical nerve-complex of rank
$n$ if the following conditions hold:

\begin{itemize}
\item $\varnothing\in F(K)$, i.e. intersection of all maximal
simplices of $K$ is empty;

\item $F(K)$ is a graded poset of rank $n$ (it means that all its
saturated chains have the cardinality $n+1$). In this case the
rank function $\rank\colon F(K)\to \Zo_{\geqslant}$ is defined,
such that $\rank(I) =$ the cardinality of saturated chain from
$\varnothing$ to $I$ minus $1$.

\item For any simplex $I\in F(K)$ the simplicial complex
$\link_K I$ is homotopy equivalent to a sphere $S^{n-\rank(I)-1}$.
Here, by definition, $\link_K\varnothing=K$ and
$S^{-1}=\varnothing$.
\end{itemize}
\end{defin}

\begin{stm}
If $P$ is an $n$-dimensional polytope, then $K_P$ is a spherical
nerve-complex of rank $n$ and, moreover, the poset $F(K_P)$ is
isomorphic to the poset of faces of $P$ ordered by reverse
inclusion.
\end{stm}

As a corollary, the poset of faces of $P$ can be restored from
$K_P$, thus $K_P$ is a complete invariant of a combinatorial
polytope $P$.

\section{Composition of polytopes}\label{sectCompPolytopes}

%Notation:???$\Rg$, to set everywhere the notation for coordinates --- upper index, and lower index --- for enumeration

Let $[m]=\{1,\ldots,m\}$ be a finite set and $\triangle_{[m]}$ be
a standard $(m-1)$-dimensional simplex in $\Ro^m$ given by
$\{\ve{x}=(x_1,\ldots,x_m)\in\Ro^m\mid x_i\geqslant 0; \sum
x_i=1\}$. The convex polytope $P\subset\Ro^m$ will be called
\textbf{stochastic} if $P\subseteq \triangle_{[m]}$. The following
definition is due to \cite[def.4.5]{Agn}.

\begin{defin}\label{definComposedPolytope}
Let $P\subseteq \Ro^m$ and $P_i\subseteq \Ro^{l_i}$ for $i\in[m]$
be stochastic polytopes. The polytope
\begin{multline}
P(P_1,\ldots,P_m)=\{(t_1\ve{x}_1,t_2\ve{x}_2,\ldots,t_m\ve{x}_m)\in
\Ro^{\sum l_i}\mid \\\mid \ve{t}=(t_1,\ldots,t_m)\in P,
\ve{x}_i\in P_i \mbox{ for each } i\}
\end{multline}
is called the composition of polytopes $P$ and $\{P_i\}$.
\end{defin}

In \cite{Agn} this operation is called the action of $P$.

\ex\label{examplDeltaPolyIsJoin} $\triangle_{[m]}(P_1,\ldots,P_m)
= P_1\ast\ldots\ast P_m$
--- the join of polytopes.
\\
\\
The original motivation of definition \ref{definComposedPolytope}
was to extend the notion of the join to more general convex sets
of parameters $t_i$.

\rem Definition \ref{definComposedPolytope} depends crucially on
the geometrical representation of polytopes, not only their
combinatorial type.

\begin{defin}\label{definNatStochPoly}
Let $L\subseteq \Ro^m$ be an affine $n$-dimensional subspace such
that $P = L\cap \Rg^m$ is a nonempty bounded set (thus a
polytope). If $P$ is a stochastic polytope and every facet
$\F_i\subset P$ is defined uniquely as $\F_i=P\cap \{x_i=0\}$ we
call $P$ a \textbf{natural (stochastic) polytope}.
\end{defin}

\rem A natural stochastic polytope $P$ in $\Ro^m$ has exactly $m$
facets.
\\
\\
For a point $\ve{x}\in \Rg^m$ define $\zer(\ve{x}) = \{i\in[m]\mid
x_i=0\}$.

\rem\label{remarkNerveStochasticDescr} For a natural stochastic
polytope $P\subseteq \Rg^m$ the nerve-complex can be defined by
the condition: $I\in K_P$, whenever there exists a point
$\ve{x}\in P$ such that $I\subseteq \zer(\ve{x})$. Indeed, $I\in
K_P$ implies that $\bigcap_{i\in I}\F_i\neq\varnothing$. Let
$\ve{x}\in \bigcap_{i\in I}\F_i$. Then $x_i=0$ for each $i\in I$
therefore $I\in\zer(\ve{x})$.

\begin{obser}
Any polytope $P$ is affine equivalent to a natural stochastic
polytope.
\end{obser}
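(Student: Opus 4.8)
The plan is to use the $m$ defining inequalities of $P$ as new coordinate functions and then to rescale them so that their sum is identically $1$. Fix an irredundant description $P=\{\ve{y}\in\Ro^n\mid\langle\ve{a}_i,\ve{y}\rangle\leqslant b_i,\ i\in[m]\}$, where $m$ is the number of facets and $\F_i=P\cap\{\langle\ve{a}_i,\ve{y}\rangle=b_i\}$. The slack functions $\ve{y}\mapsto b_i-\langle\ve{a}_i,\ve{y}\rangle$ are nonnegative on $P$ and vanish exactly on $\F_i$, so assembling them gives an affine map $A\colon\Ro^n\to\Ro^m$, $A(\ve{y})_i=b_i-\langle\ve{a}_i,\ve{y}\rangle$, with $A(P)\subseteq\Rg^m$ and $A(P)\cap\{x_i=0\}=A(\F_i)$ for every $i$. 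Since $P$ is bounded and full-dimensional, the vectors $\ve{a}_i$ span $\Ro^n$ (otherwise $P$ would be invariant under translation along a nonzero vector orthogonal to their span, contradicting boundedness), hence $A$ is injective. Thus $A$ restricts to an affine isomorphism of $P$ onto the polytope $P'=A(P)$, which lies in the $n$-dimensional affine subspace $L=A(\Ro^n)$; one checks directly that $P'=L\cap\Rg^m$ and that the sets $P'\cap\{x_i=0\}$, $i\in[m]$, are exactly the pairwise distinct facets of $P'$.

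It remains to arrange that $P'$ is stochastic. Rescaling the $i$-th inequality to $\lambda_i\langle\ve{a}_i,\ve{y}\rangle\leqslant\lambda_i b_i$ with $\lambda_i>0$ changes neither $P$ nor the facet labelling, but multiplies the $i$-th slack function by $\lambda_i$; so it is enough to find $\lambda_i>0$ with $\sum_i\lambda_i\ve{a}_i=0$, since then $\sum_i\lambda_i(b_i-\langle\ve{a}_i,\ve{y}\rangle)=\sum_i\lambda_i b_i$ is a constant $c$, which is positive because the slacks are all positive at an interior point of $P$, and dividing the $\lambda_i$ by $c$ gives sum $1$. This existence statement is essentially the only real point, and I would derive it from boundedness: by Farkas' lemma (equivalently, because every linear functional is bounded above on $P$) the conical hull of $\{\ve{a}_i\}$ is all of $\Ro^n$, so each $-\ve{a}_j$ is a nonnegative combination $\sum_i\mu_i^{(j)}\ve{a}_i$; summing these $m$ relations together with $\sum_j\ve{a}_j$ gives $\sum_i\bigl(1+\sum_j\mu_i^{(j)}\bigr)\ve{a}_i=0$, a vanishing combination with all coefficients $\geqslant1$.

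Assembling the pieces, the rescaled and normalized map $A$ is an affine equivalence from $P$ onto $P'=L\cap\Rg^m$, with $P'\subseteq\triangle_{[m]}$ by the normalization $\sum_i x_i=1$ and with each facet of $P'$ equal to $P'\cap\{x_i=0\}$ for a unique $i\in[m]$; hence $P'$ is a natural stochastic polytope affinely equivalent to $P$, as claimed.
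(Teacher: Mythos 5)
Your proof is correct, and it follows the same skeleton as the paper's: assemble the slack functions of an irredundant inequality description into an affine embedding $\Ro^n\hookrightarrow\Ro^m$ landing in $\Rg^m$, observe that facets go to the coordinate hyperplane slices, and then rescale coordinates so that the coordinate sum becomes identically $1$. The one substantive difference is where the positive linear dependence $\sum_i\lambda_i\ve{a}_i=0$ comes from. The paper invokes Minkowski's theorem: the facet volumes $S_i>0$ satisfy $\sum_iS_i\ve{a}_i=0$. You instead derive the existence of positive coefficients directly from boundedness via LP duality / Farkas: every direction, in particular each $-\ve{a}_j$, lies in the conical hull of $\{\ve{a}_i\}$, and summing the resulting representations with $\sum_j\ve{a}_j$ produces a vanishing combination with coefficients $\geqslant 1$. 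Your route is more elementary (no appeal to volume), which is a modest gain; the paper's Minkowski route, on the other hand, is what lets the author simultaneously establish Observation~\ref{obserNonnegCoeffs} (positivity of \emph{all} coefficients in a defining system for $L$, by adding a large multiple of the Minkowski relation to each of the other relations), which your argument does not address — but that observation was not part of the statement you were asked to prove. Both proofs are sound.
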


\begin{obser}\label{obserNonnegCoeffs}
The space $L\subseteq \Ro^m$ in the definition
\ref{definNatStochPoly} can be defined by the system of affine
relations $L = \{\ve{x}\in \Ro^m\mid \sum_j c_i^jx_j+d_i=0$ for
$i=1,\ldots, m-n\}$ where all the coefficients $c_i^j$ are
positive and $d_i=-1$.
\end{obser}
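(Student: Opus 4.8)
The plan is to split the data carried by $L$ into its linear part $L_0=L-p$ (for any $p\in L$) and an affine normalization, and to produce the required equations as a suitably normalized basis of the annihilator $L_0^{\perp}\subseteq\Ro^m$, which has dimension $m-n=\codim L$. Only two features of $P$ will be used: $P$ is bounded, and $P\subseteq\triangle_{[m]}$, so $P$ avoids the origin and $\triangle_{[m]}$ is compact.

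First I would use boundedness to restrict $L_0$: if some $v\in L_0\cap\Rg^m$ were nonzero, then for any $p\in P$ the whole ray $\{p+tv\mid t\ge0\}$ would lie in $L\cap\Rg^m=P$, contradicting boundedness; hence $L_0\cap\Rg^m=\{0\}$. In particular $L_0$ is disjoint from the compact convex set $\triangle_{[m]}$, whose points all have coordinate sum $1$. A strict separation theorem applied to the linear subspace $L_0$ and to $\triangle_{[m]}$ then produces a functional $\langle c,\cdot\rangle$ bounded above on $L_0$; being bounded above on a subspace, it must vanish on it, so $c\in L_0^{\perp}$, and the separating inequality evaluated at the vertices $e_1,\dots,e_m$ of $\triangle_{[m]}$ shows $c_j>0$ for all $j$. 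Thus $L_0^{\perp}$ contains a strictly positive vector.

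Next I would carry out a routine linear-algebra step: take such a vector $c$, complete it to a basis $c_1=c,c_2,\dots,c_{m-n}$ of $L_0^{\perp}$, and replace each $c_i$ with $i\ge2$ by $c_i+\lambda_i c_1$ for $\lambda_i$ large enough to make all coordinates positive; this transformation is unipotent, so the family is still a basis of $L_0^{\perp}$, now consisting of strictly positive vectors. For any $p\in P$ we have $p\in\Rg^m\setminus\{0\}$, hence $d_i:=\langle c_i,p\rangle=\sum_j c_i^j p_j>0$, and this number does not depend on the choice of $p\in L$ because $c_i\perp L_0$. Rescaling $c_i\mapsto c_i/d_i$ preserves strict positivity and linear independence and makes $\langle c_i,\ve{x}\rangle=1$ hold identically on $L$. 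Finally $L$ is contained in the solution set of the system $\langle c_i,\ve{x}\rangle=1$, $i=1,\dots,m-n$; since the $c_i$ are linearly independent, this solution set is an affine subspace of dimension $m-(m-n)=n$, and an $n$-dimensional affine subspace containing the $n$-dimensional $L$ must equal $L$. Rewriting the relations as $\sum_j c_i^j x_j+d_i=0$ with $d_i=-1$ gives the statement.

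The only step that is not pure bookkeeping is the separation argument producing a single strictly positive functional vanishing on $L_0$; its essential input is precisely the boundedness of $P$, which is what forces $L_0$ to meet $\Rg^m$ only at $0$ and hence to be strictly separable from the simplex. Everything after that — extracting a positive basis of $L_0^{\perp}$ and normalizing the constants to $-1$ — is elementary.
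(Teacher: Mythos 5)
Your proof is correct, but the key step — producing a strictly positive vector in $L_0^{\perp}$ — is obtained by a genuinely different argument than the paper's. The paper constructs the ambient embedding $j_P(\ve{y}) = (\langle\ve{a}_1,\ve{y}\rangle+b_1,\ldots,\langle\ve{a}_m,\ve{y}\rangle+b_m)$ and then invokes Minkowski's theorem (the relation $\sum S_i\ve{a}_i=0$ among inner facet normals weighted by facet volumes) to exhibit a relation $\sum_i S_i x_i+d=0$ with all $S_i>0$ among the defining equations of $L=j_P(\Ro^n)$. You instead argue intrinsically: boundedness of $P=L\cap\Rg^m$ forces the linear part $L_0$ to meet $\Rg^m$ only at the origin, hence $L_0$ is disjoint from the compact simplex $\triangle_{[m]}$, and strict separation of a subspace from a compact convex set yields a functional that vanishes on $L_0$ and is strictly positive on the vertices $e_1,\ldots,e_m$, i.e.\ a strictly positive element of $L_0^{\perp}$. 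The remaining steps (completing to a basis of $L_0^{\perp}$, making the other basis vectors positive via a unipotent change of basis by adding large multiples of the first, rescaling so that the affine constants are $1$, and the dimension count showing the system cuts out exactly $L$) parallel the corresponding steps in the paper. Your separation argument is arguably more self-contained — it needs nothing beyond the boundedness of $P$ and standard convexity, whereas Minkowski's theorem is a less elementary input — and it works directly with any natural stochastic presentation rather than with one manufactured from a half-space description; on the other hand, the paper's route through $j_P$ does double duty, proving simultaneously that every polytope is affinely equivalent to a natural stochastic one.
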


\begin{proof}[Proof of both observations]
Let
\begin{equation}\label{equatPolyHalfspaces}
P = \{\ve{y}\in \Ro^n\mid \langle \ve{a}_i,
\ve{y}\rangle+b_i\geqslant 0, i\in [m]\} \end{equation} be a
representation of $P$ as an intersection of halfspaces, where
$\ve{a}_i$ is the inner normal vector to the $i$-th facet (we
suppose that there are no excess inequalities in
\eqref{equatPolyHalfspaces} and $|\ve{a}_i|=1$).

Consider an affine embedding $j_P\colon \Ro^n\to \Ro^m$, given by
$j_P(\ve{y}) = (\langle \ve{a}_1, \ve{y}\rangle+b_1,\ldots,
\langle \ve{a}_m, \ve{y}\rangle+b_m)$. Obviously, $j_P(P)\subseteq
\Rg^m$ and, moreover, $j_P(P)=j_P(\Ro^n)\cap\Rg^m$. Denote the
affine subspace $j_P(\Ro^n)$ by $L$. This subspace is given by the
system of affine relations $L = \{\ve{x}\in \Ro^m\mid
\langle\ve{c}_i,\ve{x}\rangle+d_i=0$ for $i=1,\ldots, m-n\}$. The
facets of $j_P(P)$ are given by $j_P(P)\cap\{x_i=0\}$.

Notice that there is a relation $\sum S_i\ve{a}_i=0$ by Minkowski
theorem, where $S_i>0$ are the $(n-1)$-volumes of facets. Then one
of the affine relations for $L$ has the form $\sum_i S_ix_i+d = 0$
with all the coefficients $S_i$ strictly positive. Adding this
relation multiplied by large enough number to other relations
leads to a system of relations with positive coefficients.

Now divide each relation by $d_i$ to get the relations of the form
$\sum c_i^jx_j=1$. Set new variables $x_j'=c_1^jx_j$ to transform
one of the relations to the form $\sum x_j=1$. This gives a
stochastic polytope in $\Ro^m$. Observations proved.
\end{proof}

\begin{prop}\label{propDescrCompPolyInRelations}
Let $P\in \Ro^m$ be a natural stochastic polytope given by
$P=\Rg^m\cap \{\langle \ve{c}_i,\ve{x}\rangle = 1, i=1,\ldots,
m-n\}$, $\ve{c}_i = (c_i^{1},\ldots, c_i^{m})$ and for each $i\in
[m]$ a natural stochastic polytope $P_i\in \Ro^{l_i}$ is given by
$P_i=\Rg^{l_i}\cap \{\langle \ve{c}_{ij_i},\ve{x_i}\rangle = 1,
j_i=1,\ldots, l_i-n_i\}$, $\ve{c}_{ij_i} = (c_{ij_i}^{1},\ldots,
c_{ij_i}^{l_i})$. Then the polytope $P(P_1,\ldots,P_m)$ is a
natural stochastic polytope described by the system
\begin{multline}\label{equatComposedRelation}
P(P_1,\ldots,P_m) = \{(\ve{x}_1,\ldots,\ve{x}_m)\in
\Rg^{l_1}\times\ldots\times\Rg^{l_m}=\Rg^{\sum l_i}\mid\\\mid
c_i^{1}\langle \ve{c}_{1j_1},\ve{x}_1\rangle+c_i^{2}\langle
\ve{c}_{2j_2},\ve{x}_2\rangle+\ldots+c_i^{m}\langle
\ve{c}_{mj_m},\ve{x}_m\rangle = 1\}
\end{multline}
\end{prop}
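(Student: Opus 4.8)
The plan is to unwind Definition \ref{definComposedPolytope} coordinate-wise and verify that the defining relations of the composition are exactly those in \eqref{equatComposedRelation}. First I would take a point $(t_1\ve{x}_1,\ldots,t_m\ve{x}_m)$ with $\ve{t}=(t_1,\ldots,t_m)\in P$ and $\ve{x}_i\in P_i$, and substitute the block $t_i\ve{x}_i$ into the left-hand side of \eqref{equatComposedRelation}. Using bilinearity of $\langle -,-\rangle$, the $i$-th relation becomes $\sum_{k=1}^m c_i^k t_k \langle \ve{c}_{kj_k},\ve{x}_k\rangle$; since $\ve{x}_k\in P_k$ means $\langle \ve{c}_{kj_k},\ve{x}_k\rangle=1$ for every $j_k$, this simplifies to $\sum_k c_i^k t_k = \langle\ve{c}_i,\ve{t}\rangle$, which equals $1$ precisely because $\ve{t}\in P$. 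Also each block $t_i\ve{x}_i$ lies in $\Rg^{l_i}$ since $t_i\geqslant 0$ (as $\ve{t}\in P\subseteq\triangle_{[m]}$) and $\ve{x}_i\in\Rg^{l_i}$. This shows the composition is contained in the polytope defined by \eqref{equatComposedRelation}.

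For the reverse inclusion I would start from a point $(\ve{x}_1,\ldots,\ve{x}_m)\in\Rg^{\sum l_i}$ satisfying \eqref{equatComposedRelation} and recover the parameters. Set $t_k = \langle \ve{s}_k,\ve{x}_k\rangle$, where $\ve{s}_k$ is the vector of all $1$'s in $\Ro^{l_k}$ — equivalently $t_k$ is the sum of the coordinates of $\ve{x}_k$; note that the relation $\sum x_j = 1$ defining the stochastic polytope $P_k$ is one of the relations $\langle\ve{c}_{kj_k},\ve{x}_k\rangle=1$, so on the locus where $\ve{x}_k/t_k\in P_k$ one automatically has $t_k$ as claimed. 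If $t_k>0$ put $\ve{x}_k' = \ve{x}_k/t_k$; one checks $\ve{x}_k'\in\Rg^{l_k}$ and $\langle \ve{c}_{kj_k},\ve{x}_k'\rangle = t_k^{-1}\langle\ve{c}_{kj_k},\ve{x}_k\rangle$, which should equal $1$ — this is the step needing care, because a priori \eqref{equatComposedRelation} only constrains the \emph{combinations} $c_i^k\langle\ve{c}_{kj_k},\ve{x}_k\rangle$ summed over $k$, not each $\langle\ve{c}_{kj_k},\ve{x}_k\rangle$ individually. I would resolve this by noting that within each block the relations $\langle\ve{c}_{kj_k},\ve{x}_k\rangle$ for varying $j_k$ are independent of the ambient relations, so the natural/stochastic hypothesis on $P_k$ forces $\ve{x}_k$ to be a nonnegative multiple of a point of $P_k$; then $\ve{t}=(t_1,\ldots,t_m)$ satisfies $\langle\ve{c}_i,\ve{t}\rangle=1$ by the same bilinearity computation run backwards, and $\ve{t}\in\Rg^m$, so $\ve{t}\in P$. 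The case $t_k=0$ (so $\ve{x}_k=0$) is handled by a limiting/continuity argument or by observing it corresponds to a face.

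Finally I would check that the resulting polytope is itself natural stochastic: it sits in $\Rg^{\sum l_i}$ by construction, it is stochastic because summing the appropriate relations (the ones coming from $\sum x_j=1$ in $P$ composed with those in the $P_i$) yields $\sum_{i,j}x_{ij}=1$, it is bounded since it is a continuous image of the bounded set $P\times\prod P_i$ under the map $(\ve{t},\ve{x}_1,\ldots,\ve{x}_m)\mapsto(t_1\ve{x}_1,\ldots,t_m\ve{x}_m)$, and each facet is cut out by a single coordinate hyperplane $\{x_{ij}=0\}$ by the analogous property inherited from $P$ and the $P_i$. The main obstacle is the bookkeeping in the reverse inclusion: disentangling the block structure of the combined linear system to show that satisfying the $m-n$ "composed" relations, together with nonnegativity, actually pins down a valid $(\ve{t},\ve{x}_i)$ rather than some spurious solution; once the indexing is set up carefully this is a direct linear-algebra verification.
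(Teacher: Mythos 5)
Your forward inclusion and your framing of the problem are correct, but the reverse inclusion has a genuine gap exactly where you yourself flag it as "the step needing care." You correctly observe that a point $\ve{x}$ satisfying \eqref{equatComposedRelation} only provides constraints on the \emph{aggregated} quantities $\sum_k c_i^k\langle\ve{c}_{kj_k},\ve{x}_k\rangle$, not on each $\langle\ve{c}_{kj_k},\ve{x}_k\rangle$ individually. But your proposed resolution — that "the natural/stochastic hypothesis on $P_k$ forces $\ve{x}_k$ to be a nonnegative multiple of a point of $P_k$" — is precisely the conclusion you need to reach, not a reason; invoking it here is circular. The fact that $\langle\ve{c}_{kj_k},\ve{x}_k\rangle$ takes the same value for all $j_k$ is a nontrivial claim that must be extracted from the redundancy in the composed system, and your proposal never extracts it.

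The mechanism the paper uses is a cross-comparison: fix $i$ and fix all indices $j_\ell$ for $\ell\neq k$, and subtract the relation with index $j_k$ from the one with index $j_k'$. Everything cancels except the $k$-th block, giving $c_i^k\bigl(\langle\ve{c}_{kj_k},\ve{x}_k\rangle-\langle\ve{c}_{kj_k'},\ve{x}_k\rangle\bigr)=0$, and since one may assume $c_i^k>0$ (Observation \ref{obserNonnegCoeffs}), this forces $\langle\ve{c}_{kj_k},\ve{x}_k\rangle=\langle\ve{c}_{kj_k'},\ve{x}_k\rangle$. Only after this does one obtain a well-defined $t_k$ with $\ve{x}_k/t_k\in P_k$ (for $t_k>0$), and then the computation that $\ve{t}\in P$ goes through. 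Your alternative choice $t_k=\sum_j x_{kj}$ also has a smaller problem: the relation $\sum_j x_{kj}=1$ is a consequence of the listed relations $\langle\ve{c}_{kj_k},\ve{x}_k\rangle=1$ on the affine span of $P_k$ but is not necessarily one of them literally, so there is no shortcut to the identity $t_k=\langle\ve{c}_{kj_k},\ve{x}_k\rangle$ without the subtraction argument. Once that argument is in place the rest of your outline (handling $t_k=0$, nonnegativity, boundedness, stochasticity) is fine and essentially matches the paper.
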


\begin{proof}
By direct substitution $P(P_1,\ldots,P_m)$ as defined in
\ref{definComposedPolytope} satisfies all the specified affine
relations. On the contrary let $\ve{x} =
(\ve{x}_1,\ldots,\ve{x}_m)\in \Rg^{\sum l_i}$ satisfies relations
\eqref{equatComposedRelation} for all $i,j_1,\ldots,j_m$. Denote
$\langle \ve{c}_{ij_i},\ve{x}_i\rangle\in \Ro$ by $t_i(j_i)$. Then
$t_i(j_i)\geqslant 0$ (by nonnegativity of coefficients in affine
relations) and
$c_i^1t_1(j_1)+c_i^2t_2(j_2)+\ldots+c_i^mt_m(j_m)=1$ for each $i$,
therefore $\ve{t}(\ve{j}) = (t_1(j_1),\ldots,t_m(j_m))\in P$.

Let us show that $t_i(j_i)$ does not actually depend on $j_i$.
Consider first entry $j_1$ for simplicity. Let $j_1$ and $j_1'$ be
different indices. The point $\ve{x}$ satisfies the relations
$$
c_i^{1}\langle \ve{c}_{1j_1},\ve{x}_1\rangle+c_i^{2}\langle
\ve{c}_{2j_2},\ve{x}_2\rangle+\ldots+c_i^{m}\langle
\ve{c}_{mj_m},\ve{x}_m\rangle = 1
$$
and
$$
c_i^{1}\langle \ve{c}_{1j_1'},\ve{x}_1\rangle+c_i^{2}\langle
\ve{c}_{2j_2},\ve{x}_2\rangle+\ldots+c_i^{m}\langle
\ve{c}_{mj_m},\ve{x}_m\rangle = 1
$$
Subtracting we get $c_i^1t_1(j_1)=c_i^1\langle
\ve{c}_{1j_1},\ve{x}_1\rangle = c_i^1\langle
\ve{c}_{1j_1},\ve{x}_1\rangle = c_i^1t_1(j_1')$. Since $c_i^1\neq
0$ (at least for one $i$) we get $t_1(j_1)=t_1(j_1')$.

Thus far we can simply write $t_i$ instead of $t_i(j_i)$. Then
$\ve{t}=(t_1,\ldots,t_m)\in P$. As a consequence, $\langle
\ve{c}_{ij_i},\frac{\ve{x}_i}{t_i}\rangle=1$ for each $i$ and
$j_i$. Then
$\ve{x}=(t_1\frac{\ve{x}_1}{t_1},t_2\frac{\ve{x}_2}{t_2},\ldots,
t_m\frac{\ve{x}_m}{t_m})$ where $\ve{t}\in P$ and
$\frac{\ve{x}_i}{t_i}\in P_i$. This means $\ve{x}\in
P(P_1,\ldots,P_m)$ by definition.
\end{proof}

\ex\label{examplIterJsPoly} Let $P\subseteq \Rg^m$ be a natural
stochastic polytope (with $m$ facets) defined by relations
$\{\langle \ve{c}_i,\ve{x}\rangle=1\}$ and
$\Delta_{[l_i]}\subseteq \Ro^{l_i}$ a standard simplex given by
$\{x_1+\ldots+x_{l_i}=1\}$. The polytope
$P(l_1,\ldots,l_m)=P(\triangle_{[l_1]},\ldots,\triangle_{[l_m]})\subseteq
\Ro^{\sum l_i}$ is called the \textbf{iteration} of the polytope
$P$. It is given in $\Rg^{\sum l_i}$ by the system of affine
relations

\begin{equation}
c_i^1(x_{11}+\ldots+x_{1l_1})+c_i^2(x_{21}+\ldots+x_{2l_2})+\ldots+
c_i^m(x_{m1}+\ldots+x_{ml_m})=1.
\end{equation}

If $P$ is simple then $P(l_1,\ldots,l_m)$ is simple as well (see
section \ref{sectCompHomotopySpheres} or the work \cite{BBCGit}).
Such polytopes, their quasitoric manifolds and moment-angle
complexes were studied in \cite{BBCGit}. For the particular case
$P(l,\ldots,l)$, $l>0$ we use the notation $lP$.

\rem In section \ref{sectCompHomotopySpheres} we will show that
for natural stochastic polytopes the operation $P(P_1,\ldots,P_m)$
depends up to combinatorial equivalence only on the combinatorial
type of polytopes. Since each polytope has a natural stochastic
representation we can view the composition as the operation on
combinatorial polytopes.

\begin{prop}[Associativity law for the composition of
polytopes]\label{propAssocPolyIter} Let $P$ be a stochastic
polytope in $\Rg^m$, $P_1,\ldots,P_m$ be stochastic polytopes in
$\Rg^{l_1},\ldots,\Rg^{l_m}$ respectively and
$$P_{11},\ldots,P_{1l_1}, P_{21},\ldots,P_{2l_2}, \ldots,
P_{m1},\ldots,P_{ml_m}$$ --- stochastic polytopes as well. Then
\begin{multline}P(P_1(P_{11},\ldots,P_{1l_1}),\ldots,P_m(P_{m1},\ldots,P_{ml_m}))
=\\
P(P_1,\ldots,P_m)(P_{11},\ldots,P_{1l_1},\ldots,P_{m1},\ldots,P_{ml_m}).
\end{multline}
\end{prop}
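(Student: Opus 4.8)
The plan is to verify the associativity law by directly computing both sides as subsets of $\Rg^{\sum_{i,j}l_{ij}}$ (here I write $l_{ij}$ for the dimension of the ambient space of $P_{ij}$), using Definition \ref{definComposedPolytope} applied twice. Recall that a point of $P(Q_1,\ldots,Q_m)$ is of the form $(t_1\ve{x}_1,\ldots,t_m\ve{x}_m)$ with $\ve{t}\in P$ and $\ve{x}_i\in Q_i$. First I would unwind the left-hand side: a point of $P(P_1(P_{11},\ldots,P_{1l_1}),\ldots,P_m(P_{m1},\ldots,P_{ml_m}))$ has the form $(t_1\ve{z}_1,\ldots,t_m\ve{z}_m)$ with $\ve{t}=(t_1,\ldots,t_m)\in P$ and $\ve{z}_i\in P_i(P_{i1},\ldots,P_{il_i})$; in turn $\ve{z}_i=(s_{i1}\ve{y}_{i1},\ldots,s_{il_i}\ve{y}_{il_i})$ with $\ve{s}_i=(s_{i1},\ldots,s_{il_i})\in P_i$ and $\ve{y}_{ij}\in P_{ij}$. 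So the generic point of the left-hand side is $(t_is_{ij}\ve{y}_{ij})_{i\in[m],\,j\in[l_i]}$.

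Then I would unwind the right-hand side the same way: a point of $P(P_1,\ldots,P_m)(P_{11},\ldots,P_{ml_m})$ has the form $(u_{ij}\ve{y}_{ij})_{i,j}$, where the scalars $(u_{ij})_{i,j}$ form a point of $P(P_1,\ldots,P_m)$ and $\ve{y}_{ij}\in P_{ij}$. But a point of $P(P_1,\ldots,P_m)$ is by definition $(\ve{t}\text{-scaled blocks})$, i.e. of the form $(t_i\ve{s}_i)_{i}$ with $\ve{t}\in P$, $\ve{s}_i\in P_i$; writing $\ve{s}_i=(s_{i1},\ldots,s_{il_i})$ this says $u_{ij}=t_is_{ij}$. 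Hence the generic point of the right-hand side is also $(t_is_{ij}\ve{y}_{ij})_{i,j}$ with $\ve{t}\in P$, $(s_{i1},\ldots,s_{il_i})\in P_i$, $\ve{y}_{ij}\in P_{ij}$ — literally the same parametrized set. So the two polytopes coincide as subsets of Euclidean space, which is the claim.

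The only point requiring a little care — and the place I would expect to be the mild obstacle — is the bookkeeping in the inner scaling: in the right-hand side one must check that the condition "$(u_{ij})_{i,j}\in P(P_1,\ldots,P_m)$" unpacks exactly to "there exist $\ve{t}\in P$ and $\ve{s}_i\in P_i$ with $u_{ij}=t_is_{ij}$", with no hidden nonuniqueness issue; this is immediate from Definition \ref{definComposedPolytope} but one should note that when $t_i=0$ the block $t_i\ve{x}_i$ is the zero block regardless of $\ve{x}_i$, so the correspondence between the two descriptions is at the level of \emph{sets of points}, not parametrizations, and one simply observes that the set $\{(t_is_{ij}\ve{y}_{ij}) : \ve{t}\in P,\ \ve{s}_i\in P_i,\ \ve{y}_{ij}\in P_{ij}\}$ is manifestly symmetric in how it is built up. Alternatively, if one prefers an algebraic verification, one can invoke Proposition \ref{propDescrCompPolyInRelations}: write down the defining affine relations of each composed polytope by iterating \eqref{equatComposedRelation}, and check that substituting the relations of $P(P_1,\ldots,P_m)$ into the outer relations produces the same system of relations as substituting the relations of each $P_i(P_{i1},\ldots,P_{il_i})$ into the relations of $P$ — both yield $\sum_{i}c_i^{\,p}\sum_{j}c_{ij}^{\,q}\langle\ve{c}_{ij k},\ve{x}_{ij}\rangle=1$ after expansion, using bilinearity of the pairing. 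I would present the point-set argument as the main proof since it is cleaner, and mention the relations computation as a remark.
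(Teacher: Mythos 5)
Your proof is correct and is essentially the argument the paper has in mind: the paper simply remarks that the associativity ``follows easily from the definition,'' and your point-set unwinding of Definition~\ref{definComposedPolytope} on both sides---showing each yields the set of points $(t_is_{ij}\ve{y}_{ij})_{i,j}$ with $\ve{t}\in P$, $\ve{s}_i\in P_i$, $\ve{y}_{ij}\in P_{ij}$---is exactly that easy verification, spelled out. Your cautionary remark about nonuniqueness of the parametrization when some $t_i=0$ is a sensible observation but harmless, since both sides are defined as sets of points rather than as parametrized families.
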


The proof follows easily from the definition
\ref{definComposedPolytope}.

\rem It can be seen that $P(\pt,\ldots,\pt) = \pt(P) = P$, where
$\pt = \triangle_{[1]}$ is a point. Thus far the set of all
stochastic polytopes carries the structure of an operad, where the
polytope in $\Rg^m$ is viewed as $m$-adic operation and the
composition is given by the composition of polytopes described
above. Proposition \ref{propAssocPolyIter} expresses the
associativity condition for the operad and the polytope $\pt$ is
the ``identity'' element. Natural stochastic polytopes form a
suboperad by proposition \ref{propDescrCompPolyInRelations}.

\section{Composition of simplicial
complexes}\label{sectCompSimpComp}

%? Polyhedral products and joins, composition
Consider a simplicial complex $K$ on $m$ vertices and a set of
topological pairs $\{(X_i,A_i)\}_{i\in[m]}$, $A_i\subseteq X_i$.
For a simplex $I\in K$ let $V_I$ be the subset of
$X_1\times\ldots\times X_m$ given by $V_I = C_1\times\ldots\times
C_m$, where $C_i = X_i$ if $i\in I$ and $C_i = A_i$ if $i\notin
I$. The space
$$
\Z_K(\underline{(X_i,A_i)}) = \bigcup\limits_{I\in K}V_I\subseteq
\prod\limits_iX_i
$$
is called the \textbf{polyhedral product} of pairs $(X_i,A_i)$
defined by $K$.

\ex The motivating examples of polyhedral products are
moment-angle complexes $\mathcal{Z}_K(D^2,S^1)$, real moment-angle
complexes $\Z_K(D^1,S^0)$ and Davis--Januszkiewicz spaces $DJ(K) =
\Z_K(CP^\infty,\pt)$ (see \cite{BP2}). Another series of examples
is given by wedges $\bigvee_{\alpha} X_{\alpha}\cong
\Z_{\Delta_{[m]}^{(0)}}(\underline{(X_\alpha,\pt)})$, fat wedges
$\Z_{{\partial\Delta_{[m]}}}(\underline{(X_\alpha,\pt)})$ and
generalized fat wedges
$\Z_{\Delta_{[m]}^{(k)}}(\underline{(X_\alpha,\pt)})$. The spaces
of the form $\Z_K(\underline{(X_\alpha,\pt)})$ were studied in
\cite{An}. The most general situation
$\Z_K(\underline{(X_i,A_i)})$ was defined and studied by A. Bahri,
M. Bendersky, F. R. Cohen and S. Gitler in \cite{BBCG} from the
homotopy point of view.
\\
\\
The very natural thing is to substitute the topological product in
the definition of a polyhedral product by any other operation on
topological spaces. Thus far we can get \textbf{polyhedral smash
product} $\Z_K^{\wedge}(\underline{(X_i,A_i)})$ \cite{BBCG} and
\textbf{polyhedral join} $\Z_K^{\ast}(\underline{(X_i,A_i)})$ as
defined below.

\begin{defin}\label{definPolyhedralJoin}
Let $\{(X_i,A_i)\}_{i\in[m]}$ be topological pairs and $K$ a
simplicial complex on $[m]$. For each simplex $I\in K$ consider a
subset $V_I\subseteq X_1\ast\ldots\ast X_m$ of the form $V_I =
C_1\ast\ldots\ast C_m$, where $C_i = X_i$ if $i\in I$ and $C_i =
A_i$ if $i\notin I$. The space
$$
\Z_K^{\ast}(\underline{(X_i,A_i)}) = \bigcup\limits_{I\in
K}V_I\subseteq \divideontimes X_i
$$
is called the polyhedral join of pairs $(X_i,A_i)$.
\end{defin}

\begin{obser}
If $X_i$ is a simplicial complex and $A_i$ its simplicial
subcomplex, the space $\Z_K^{\ast}(\underline{(X_i,A_i)})$ has a
canonical simplicial structure. So far the polyhedral join is well
defined on the category of simplicial complexes as opposed to
polyhedral product.
\end{obser}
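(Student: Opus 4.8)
The plan is to exhibit $\Z_K^{\ast}(\underline{(X_i,A_i)})$ explicitly as a subcomplex of the join $\mathbf{X}=X_1\ast\ldots\ast X_m$ and, along the way, to read off a purely combinatorial description of its simplices. Recall that when $X_1,\ldots,X_m$ are abstract simplicial complexes on pairwise disjoint vertex sets $W_1,\ldots,W_m$, the join $\mathbf{X}$ is the simplicial complex on $W_1\sqcup\ldots\sqcup W_m$ whose simplices are exactly the disjoint unions $\sigma=\sigma_1\sqcup\ldots\sqcup\sigma_m$ with each $\sigma_i\in X_i$ (the empty simplex allowed in each factor). If $A_i\subseteq X_i$ is a subcomplex, then for any choice $C_i\in\{X_i,A_i\}$ the join $C_1\ast\ldots\ast C_m$ is again a subcomplex of $\mathbf{X}$, since membership in a join is tested coordinatewise.

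First I would fix a simplex $\sigma=\sigma_1\sqcup\ldots\sqcup\sigma_m\in\mathbf{X}$ and ask for which $I\in K$ one has $\sigma\in V_I$. By Definition \ref{definPolyhedralJoin}, $\sigma\in V_I$ exactly when $\sigma_i\in A_i$ for every $i\notin I$; equivalently, setting $s(\sigma)=\{i\in[m]\mid \sigma_i\notin A_i\}$, exactly when $s(\sigma)\subseteq I$. Hence $\sigma$ lies in $\Z_K^{\ast}(\underline{(X_i,A_i)})=\bigcup_{I\in K}V_I$ if and only if there is some $I\in K$ with $s(\sigma)\subseteq I$, which, because $K$ is closed under passing to subsets, is equivalent to $s(\sigma)\in K$.

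This yields the canonical simplicial structure directly: $\Z_K^{\ast}(\underline{(X_i,A_i)})$ is the family of all $\sigma_1\sqcup\ldots\sqcup\sigma_m$ with $\sigma_i\in X_i$ for all $i$ and $\{i\mid\sigma_i\notin A_i\}\in K$. That this family is a simplicial complex follows at once, either from the above criterion (if $\tau\subseteq\sigma$ then $s(\tau)\subseteq s(\sigma)$, so $s(\tau)\in K$) or from the general fact that each $V_I$ is a subcomplex of $\mathbf{X}$ and a union of subcomplexes is a subcomplex. The contrasting remark in the last sentence then needs no separate argument: the categorical product of two simplicial complexes is not itself a simplicial complex --- its realization is a product of polytopes, not canonically triangulated --- so $\Z_K(\underline{(X_i,A_i)})$ carries in general only a CW structure, whereas $\Z_K^{\ast}(\underline{(X_i,A_i)})$ carries a genuine simplicial one. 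I do not expect any real obstacle here; the only point requiring care is the bookkeeping of empty simplices in the factors (so that, e.g., a vertex of $X_j$ is the simplex of $\mathbf{X}$ with $\sigma_j$ that vertex and all other $\sigma_i=\varnothing$) together with the convention $\varnothing\in A_i$, which is what makes $V_I$ contain all simplices supported on the factors indexed by $I$.
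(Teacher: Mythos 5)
Your proof is correct and matches the paper's implicit reasoning: the paper states the observation without proof and immediately follows it with a purely combinatorial description of $K(K_1,\ldots,K_m)$, namely that $I_1\sqcup\ldots\sqcup I_m$ is a simplex iff $\{i\in[m]\mid I_i\notin K_i\}\in K$, which is exactly the criterion $s(\sigma)\in K$ you derive (specialized to $X_i=\Delta_{[l_i]}$, $A_i=K_i$). Your argument supplies the short justification the paper leaves to the reader, and the closing remark about why the polyhedral \emph{product} lacks a canonical triangulation is exactly the intended contrast.
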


Let $K$ be a simplicial complex on the set $[m]$. It can be
considered as a subcomplex of $\Delta_{[m]}$ --- the simplex on
the set $[m]$, so far there is a pair $(\Delta_{[m]},K)$.

\begin{defin}\label{definComposedSimpComp}
Let $K$ be a simplicial complex on the set $[m]$ and $K_i$ a
simplicial complex on the set $[l_i]$ for each $i\in [m]$. The
simplicial complex $K(K_1,\ldots,K_m) =
\Z_K^{\ast}(\underline{(\Delta_{[l_i]},K_i)})$ will be called the
\textbf{composition} of $K$ with $K_i, i\in [m]$.
\end{defin}

Now we define the composition of simplicial complexes in purely
combinatorial terms. Let $K$ be a simplicial complex on $m$
vertices, which are possibly ghost. Let $K_1,\ldots,K_m$ be
simplicial complexes on the sets $[l_1],\ldots,[l_m]$ (ghost
vertices are allowed as well). Then $K(\underline{K_i})$ is a
simplicial complex on the set $[l_1]\sqcup\ldots\sqcup[l_m]$
defined by the following condition: the set
$I=I_1\sqcup\ldots\sqcup I_m, I_i\subseteq [l_i]$ is the simplex
of $K(K_1,\ldots,K_m)$ whenever $\{i\in[m]\mid I_i\notin K_i\}\in
K$.

\begin{figure}[h]
\begin{center}
\includegraphics[scale=0.3]{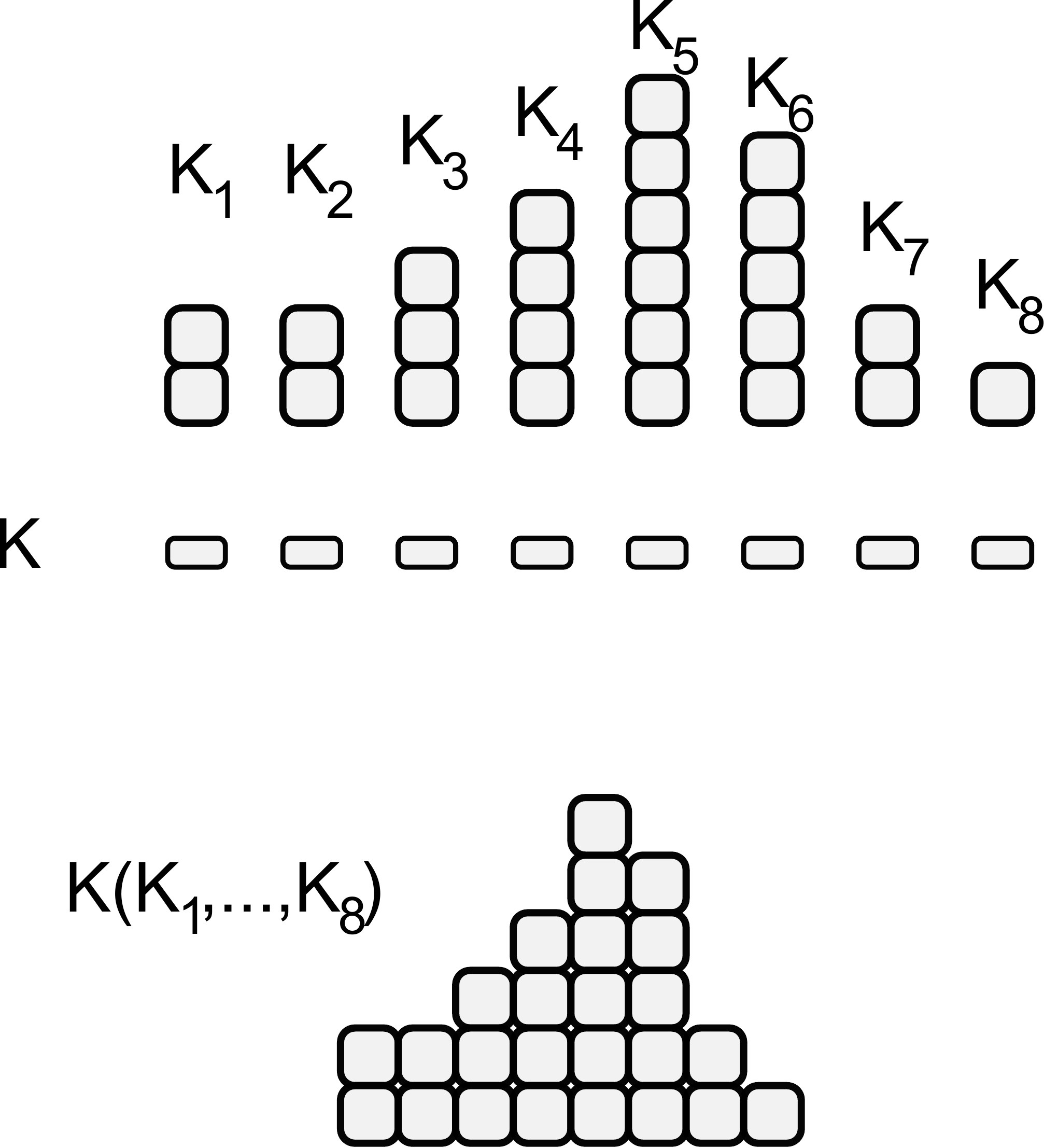}
\end{center}
\caption{Vertices of $K(K_1,\ldots,K_m)$}\label{FigComposedVert}
\end{figure}

The process of constructing the complex $K(\Ks)=K(K_1,\ldots,K_m)$
is depicted on figures \ref{FigComposedVert} and
\ref{FigComposedSimp}. The set of vertices of $K(\Ks)$ is the
union of vertices of $K_i$, which can be depicted by a simple
diagram (fig. \ref{FigComposedVert}). To construct the simplex of
$K(\Ks)$ we fix any simplex $J\in K$ and take full subcomplex
$\Delta_{[l_i]}$ (or any of its faces) for $i\in J$ and any
simplex $I_i\in K_i$ for each $i\notin J$. The union of these sets
gives a simplex of $K(\Ks)$ (fig. \ref{FigComposedSimp}). All
simplices $I\in K(\Ks)$ can be constructed by such procedure. This
approach to the construction of $K(\Ks)$ will be discussed in
section \ref{sectCompHomotopySpheres} in more detail.

\begin{figure}[h]
\begin{center}
\includegraphics[scale=0.3]{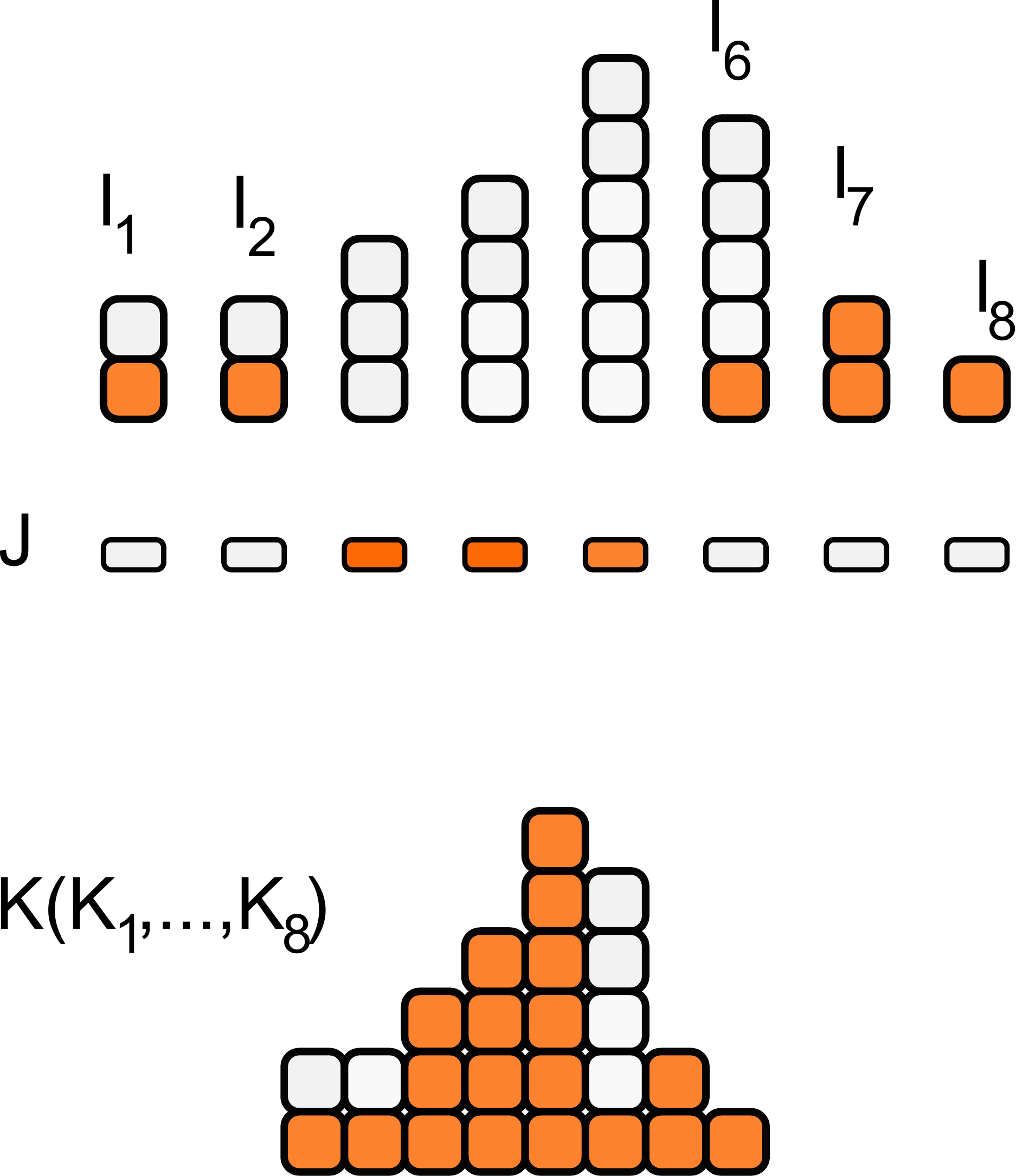}
\end{center}
\caption{Simplex of $K(K_1,\ldots,K_m)$}\label{FigComposedSimp}
\end{figure}

Let $o^{l}$ be the simplicial complex on $l>0$ vertices which has
no nonempty simplices. It means that all its vertices are ghost.
We formally set $\partial\Delta_{[1]}=o^1$.

By remark \ref{remarkNerveStochasticDescr} we can set $K_{\pt} =
o^1$ since the polytope $\pt=\triangle_{[1]}$ is defined by
$\Rg\cap\{x\in\Ro\mid x=1\}$ and does not intersect the hyperplane
$\{x=0\}$.

\ex\label{examplO1} We have by definition $K(o^1,\ldots,o^1) = K$
and $o^1(K) = K$.

\ex\label{examplOl} The complex $K(o^l,o^1,\ldots,o^1)$. Let $v_1$
be the first vertex of $K$. Then $K(o^l,o,\ldots,o)$ can be
described by the following procedure: the vertex $v_1$ is replaced
by a simplex $I_1=\{v_1^{1},\ldots,v_1^{l}\}$ and simplices $I\in
K$, containing $v_1$ are blown up to simplices
$(I\setminus\{v_1\})\sqcup I_1$. Therefore, $K(o^l,o,\ldots,o) =
K_{[m]\setminus v_1}\cup (\link_Kv_1)\ast I_1$.
\ex\label{examplOofKs} $o^m(K_1,\ldots,K_m) = K_1\ast\ldots\ast
K_m$.
\\
\\
Next statement provides a connection between the composition of
polytopes (in the natural stochastic case) and the composition of
simplicial complexes.

\begin{prop}\label{propFunctorOperad}
Let $P,P_1,\ldots, P_m$ be natural stochastic polytopes. Then
$$K_{P(P_1,\ldots,P_m)} = K_P(K_{P_1},\ldots,K_{P_m}).$$
\end{prop}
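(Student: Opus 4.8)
The plan is to compute both sides of the claimed identity directly using the combinatorial descriptions of the nerve-complex and of the composition, and to check that a subset of the vertex set $[l_1]\sqcup\ldots\sqcup[l_m]$ belongs to one complex if and only if it belongs to the other. By Proposition~\ref{propDescrCompPolyInRelations} the polytope $Q := P(P_1,\ldots,P_m)$ is itself a natural stochastic polytope in $\Rg^{\sum l_i}$, so by Remark~\ref{remarkNerveStochasticDescr} its nerve-complex is $K_Q = \{I \subseteq [l_1]\sqcup\ldots\sqcup[l_m] \mid \exists\, \ve{x}\in Q\text{ with } I \subseteq \zer(\ve{x})\}$. Thus the left-hand side is understood once we understand which coordinate-zero-sets occur for points of $Q$. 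On the right-hand side, by the combinatorial definition of composition, a set $I = I_1\sqcup\ldots\sqcup I_m$ with $I_i\subseteq[l_i]$ lies in $K_P(K_{P_1},\ldots,K_{P_m})$ exactly when $\{i\in[m]\mid I_i\notin K_{P_i}\}\in K_P$.

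First I would unwind the description of points of $Q$ from Definition~\ref{definComposedPolytope}: a point of $Q$ is $\ve{x}=(t_1\ve{x}_1,\ldots,t_m\ve{x}_m)$ with $\ve{t}=(t_1,\ldots,t_m)\in P$ and $\ve{x}_i\in P_i$. For such a point, the zero set in block $i$ is: all of $[l_i]$ if $t_i=0$, and equal to $\zer(\ve{x}_i)$ if $t_i>0$. Next I would translate the condition ``$I_i\notin K_{P_i}$'' using Remark~\ref{remarkNerveStochasticDescr} applied to $P_i$: $I_i\in K_{P_i}$ iff there is $\ve{x}_i\in P_i$ with $I_i\subseteq\zer(\ve{x}_i)$; so $I_i\notin K_{P_i}$ means no point of $P_i$ vanishes on all of $I_i$. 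The key bookkeeping observation is then: given a candidate $I = I_1\sqcup\ldots\sqcup I_m$, set $A := \{i\in[m]\mid I_i\neq\varnothing\}$ and $B := \{i\in[m]\mid I_i\notin K_{P_i}\}$; note $B\subseteq A$ since $\varnothing\in K_{P_i}$ always.

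The heart of the argument is the equivalence: $I\in K_Q$ $\iff$ $B\in K_P$. For the direction ($\Rightarrow$), given $\ve{x}\in Q$ with $I\subseteq\zer(\ve{x})$, write $\ve{x}=(t_1\ve{x}_1,\ldots,t_m\ve{x}_m)$; for each $i\in B$ we cannot have $t_i>0$ (else $I_i\subseteq\zer(\ve{x}_i)$ would force $I_i\in K_{P_i}$), hence $t_i=0$, so $B\subseteq\zer(\ve{t})$, and since $\ve{t}\in P$ this gives $B\in K_P$. For the direction ($\Leftarrow$), suppose $B\in K_P$; choose $\ve{t}\in P$ with $B\subseteq\zer(\ve{t})$; for each $i\in B$ pick any $\ve{x}_i\in P_i$ (block $i$ of the resulting point is then $\ve{0}$, which contains $I_i$); for each $i\notin B$ we have $I_i\in K_{P_i}$, so pick $\ve{x}_i\in P_i$ with $I_i\subseteq\zer(\ve{x}_i)$; then $\ve{x}=(t_1\ve{x}_1,\ldots,t_m\ve{x}_m)\in Q$ satisfies $I\subseteq\zer(\ve{x})$, so $I\in K_Q$. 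Combining with the right-hand side description, which says $I\in K_P(K_{P_1},\ldots,K_{P_m})$ iff $B\in K_P$, completes the proof.

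I expect the main obstacle to be purely notational rather than conceptual: one must be careful that Remark~\ref{remarkNerveStochasticDescr} requires the polytope in question to be natural stochastic (which holds for $Q$ by Proposition~\ref{propDescrCompPolyInRelations}, and for each $P_i$ and $P$ by hypothesis), and one must handle the case $t_i>0$ versus $t_i=0$ cleanly, keeping track of the fact that empty $I_i$ automatically lies in every $K_{P_i}$ so contributes nothing to $B$. A minor subtlety worth stating explicitly is that the point $\ve{x}$ produced in the ($\Leftarrow$) direction genuinely lies in $Q$ in the sense of Definition~\ref{definComposedPolytope} (this is immediate since we exhibited it in the parametrized form $(t_i\ve{x}_i)_i$ with $\ve{t}\in P$, $\ve{x}_i\in P_i$), so no appeal to the relation-description of $Q$ is even needed there.
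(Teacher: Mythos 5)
Your proof is correct, and it takes a genuinely different (and somewhat cleaner) route than the paper's. The paper works entirely through the \emph{relation} description of $Q = P(P_1,\ldots,P_m)$ supplied by Proposition~\ref{propDescrCompPolyInRelations}: it introduces an auxiliary lemma (Lemma~\ref{lemmaSolutionForGivenY}) asserting that a nonnegative solution $\ve{x}$ of $\sum_j c_i^j x_j = y$ with all $c_i^j>0$ satisfies either $\zer(\ve{x})\in K_Q$ when $y>0$ or $\ve{x}=\ve{0}$ when $y=0$, and then carries out a case analysis on $t_s = \langle\ve{c}_{sj_s},\ve{x}_s\rangle$ being zero or positive. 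You instead work directly with the \emph{parametric} description of $Q$ from Definition~\ref{definComposedPolytope}, invoking Proposition~\ref{propDescrCompPolyInRelations} only for the fact that $Q$ is a natural stochastic polytope, so that Remark~\ref{remarkNerveStochasticDescr} characterizes $K_Q$ by zero-sets. This bypasses the auxiliary lemma entirely: the dichotomy ``block $i$ is $\ve{0}$ if $t_i=0$, and equals $\zer(\ve{x}_i)$ if $t_i>0$'' is read off directly from $\ve{x}=(t_1\ve{x}_1,\ldots,t_m\ve{x}_m)$. What the paper's approach buys is a self-contained verification that stays inside the language of affine relations used throughout Section~\ref{sectCompPolytopes}; what yours buys is brevity and conceptual transparency, making the role of the set $B=\{i\mid I_i\notin K_{P_i}\}$ and the downward-closure of $K_P$ explicit. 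One small point worth keeping in the write-up: in the forward direction you should explicitly note that $B\subseteq\zer(\ve{t})\in K_P$ and then appeal to downward closure of the simplicial complex $K_P$ to get $B\in K_P$; you state the conclusion but the downward-closure step deserves a word.
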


\begin{proof}
We need a technical lemma. Recall from section
\ref{sectCompPolytopes} that for $\ve{x}\in \Rg^m$,
$\ts(\ve{x})=\{i\in [m]\mid x_i=0\}$.

\begin{lemma}\label{lemmaSolutionForGivenY}
Let $Q$ be a polytope given by $\Rg^m\cap \{\sum c_i^jx_j=1,
i=1,\ldots,m-n\}$ with $c_i^j>0$. Fix $y\in\Ro$. If $\ve{x}\in
\Rg^m$ is the solution to the system of equations $\sum
c_i^jx_j=y$, then $y\geqslant 0$ and either $\zer(\ve{x})\in K_Q$
if $y>0$ or $\ve{x}=\ve{0}$ if $y=0$.
\end{lemma}

\begin{proof}
If $y=0$, the statement is evident since $c_i^j>0$ and $\ve{x}$
should be nonnegative. If $y>0$ consider the point $\ve{x}/y$. It
can be seen that $\ve{x}/y\in Q$ and
$\zer(\ve{x}/y)=\zer(\ve{x})$. Therefore $\zer(\ve{x})\in K_Q$.
\end{proof}

Let $\ve{x}=(x_1,\ldots,x_l)\in\Ro^l$. If $\ve{x}\in P$, then
$\zer(\ve{x})\in K_P$. Vice-versa, if $I\in K_P$ then there exists
$\ve{x}\in P$ such that $I\subseteq\zer(\ve{x})$ (remark
\ref{remarkNerveStochasticDescr}).

It can be seen that both complexes $K_{P(P_1,\ldots,P_m)}$ and
$K_P(K_{P_1},\ldots,K_{P_m})$ have the same set of vertices
$[l_1]\sqcup\ldots\sqcup[l_m]$. Denote $l_1+\ldots+l_m$ by
$\Sigma$. Let
\begin{equation*}
\ve{x}=(x_{11},\ldots,x_{1l_1},x_{21},\ldots,x_{2l_2},\ldots,x_{m1},\ldots,x_{ml_m})\in\Rg^{\Sigma}=
(\ve{x}_1,\ldots,\ve{x}_m)
\end{equation*}
be the point of $P(P_1,\ldots,P_m)$. Then for the point $\ve{x}$
of $P(P_1,\ldots,P_m)$ we have
$$\left\langle\ve{c}_i,(\langle\ve{c}_{1j_1}, \ve{x}_1\rangle,
\langle\ve{c}_{2j_2}, \ve{x}_2\rangle,\ldots,\langle\ve{c}_{mj_m},
\ve{x}_m\rangle)\right\rangle=1.$$ Denote $\langle\ve{c}_{sj_s},
\ve{x}_s \rangle$ by $t_s$ (it does not depend on $j_1,\ldots,
j_m$ --- see proof of proposition
\ref{propDescrCompPolyInRelations}) and set
$\ve{t}=(t_1,\ldots,t_m)$. By observation \ref{obserNonnegCoeffs}
we may assume $t_s\geqslant 0$. Therefore, $\zer(\ve{t})\in K_P$.
For all $s\in[m]$ we have an alternative:
\begin{itemize}
\item If $s\in \zer(\ve{t})$, then $t_s=0$ and
$\langle \ve{c}_{sj_s},\ve{x}_s\rangle = 0$. Then
$\ve{x}_s=\ve{0}$ by lemma \ref{lemmaSolutionForGivenY}.

\item If $s\notin\zer(\ve{t})$, then $t_s\neq 0$ and $\langle \ve{c}_{sj_s},\ve{x}_s\rangle =
t_s>0$. Then by lemma \ref{lemmaSolutionForGivenY}
$\zer(\ve{x}_s)\in K_{P_s}$.
\end{itemize}

Therefore, $\zer(\ve{x})\in K_P(K_{P_1},\ldots,K_{P_m})$.
Preceding arguments show that if $I\in K_{P(P_1,\ldots,P_m)}$,
then $I\in K_P(K_{P_1},\ldots,K_{P_m})$. Now let $J\in
K_P(K_{P_1},\ldots,K_{P_m})$, $J=A_1\sqcup\ldots\sqcup A_m$, where
$A_s\subseteq[l_s]$. We need to show that there exists a point
$\ve{x}\in P(P_1,\ldots,P_m)$ such that $J\in\zer(\ve{x})$.

By definition there exists a simplex $I\in K_P$ such that $s\notin
I$ implies $A_s\in K_{P_s}$. There exists a point
$\ve{t}=(t_1,\ldots,t_m)\in P$ such that $I\subseteq\zer(\ve{t})$.
Also for each $s$ there exist solutions to the system of equations
$\{\langle \ve{c}_{s,j_s},\ve{x}_s \rangle =
t_s\}_{j_s=1,\ldots,l_s}$ such that $A_s\subseteq\zer(\ve{x}_s)$
if $t_s\neq 0$ and $\ve{x}_s=\ve{0}$ if $t_s=0$ (equiv.
$\zer(\ve{x}_s)=[l_s]\supseteq A_s$). Then the nonnegative
solution to the system of equations

\begin{equation*}\left\langle\ve{c}_i,(\langle\ve{c}_{1j_1},
\ve{x}_1\rangle, \langle\ve{c}_{2j_2},
\ve{x}_2\rangle),\ldots,\langle\ve{c}_{mj_m},
\ve{x}_m\rangle)\right\rangle=1
\end{equation*}
is given by $\ve{x}=(\ve{x}_1,\ve{x}_2,\ldots,\ve{x}_m)$, where
$\zer(\ve{x}) =
\zer(\ve{x}_1)\sqcup\ldots\sqcup\zer(\ve{x}_m)\supseteq J$. This
concludes the proof.
\end{proof}

\begin{cor}
If $P, P_1,\ldots,P_m$ are combinatorially equivalent to
$Q,Q_1,\ldots,Q_m$ respectively and all the polytopes are natural
stochastic, then $P(P_1,\ldots,P_m)$ is combinatorially equivalent
to $Q(Q_1,\ldots,Q_m)$. Therefore, $P(P_1,\ldots,P_m)$ can be
viewed as a well-defined operation on combinatorial polytopes.
\end{cor}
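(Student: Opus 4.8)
The plan is to transport the assertion from polytopes to their nerve-complexes via Proposition~\ref{propFunctorOperad}, and then to check that the combinatorial composition of simplicial complexes is invariant under simplicial isomorphisms of its inputs. Recall from Section~\ref{sectPrelim} that the nerve-complex is a complete invariant of the combinatorial type of a polytope: the Statement there identifies $F(K_P)$ with the face poset of $P$ ordered by reverse inclusion, and the face poset, hence the whole nerve-complex, is reconstructible from $K_P$ functorially. Consequently, for natural stochastic polytopes, $P\cong Q$ combinatorially if and only if there is an isomorphism of simplicial complexes $K_P\to K_Q$, and likewise $P_i\cong Q_i$ if and only if $K_{P_i}\cong K_{Q_i}$. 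Note also that a natural stochastic polytope in $\Ro^m$ has exactly $m$ facets, all of them nonempty, so $K_P$ is a complex on the full vertex set $[m]$ with no ghost vertices; this keeps the vertex-set bookkeeping below clean.

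The key step I would isolate as a lemma is the functoriality of the composition with respect to isomorphisms. Starting from the purely combinatorial description --- that $I=I_1\sqcup\ldots\sqcup I_m$ is a simplex of $K(K_1,\ldots,K_m)$ exactly when $\{\,i\in[m]\mid I_i\notin K_i\,\}\in K$ --- one sees that if $f\colon K\to K'$ is a simplicial isomorphism induced by a bijection $\sigma$ of $[m]$, and $g_i\colon K_i\to K'_{\sigma(i)}$ are simplicial isomorphisms, then the $\sigma$-permuted disjoint union of the maps $g_i$ carries simplices of $K(K_1,\ldots,K_m)$ to simplices of $K'(K'_1,\ldots,K'_m)$ and back, because the defining condition is transported verbatim by $\sigma$ together with the $g_i$. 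Applying this with $f$ an isomorphism $K_P\to K_Q$ and the $g_i$ isomorphisms $K_{P_i}\to K_{Q_i}$ chosen compatibly with the facet relabeling $\sigma$, we obtain $K_P(K_{P_1},\ldots,K_{P_m})\cong K_Q(K_{Q_1},\ldots,K_{Q_m})$.

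Finally, Proposition~\ref{propFunctorOperad} rewrites both sides as $K_{P(P_1,\ldots,P_m)}\cong K_{Q(Q_1,\ldots,Q_m)}$, and since the nerve-complex is a complete combinatorial invariant, $P(P_1,\ldots,P_m)$ and $Q(Q_1,\ldots,Q_m)$ are combinatorially equivalent; the second assertion of the corollary is then a restatement. The step that requires the most care is exactly the interplay of labelings: a combinatorial equivalence $P\cong Q$ need not be the identity on the labeling $[m]$ of the facets, so the inducing permutation $\sigma$ must be applied simultaneously to the outer complex and to the tuple $(K_{P_i})_{i\in[m]}$, i.e. the combinatorial equivalences $P\cong Q$ and $P_i\cong Q_i$ must be part of one compatible system. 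In the cases of interest (for instance the iterations $P(l_1,\ldots,l_m)$, where each $K_{P_i}=\partial\Delta_{[l_i]}$) such a compatible system is automatically available, and then the argument goes through as above.
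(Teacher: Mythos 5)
Your approach is correct and is the one the paper intends: rewrite $K_{P(P_1,\ldots,P_m)}$ as $K_P(K_{P_1},\ldots,K_{P_m})$ via Proposition~\ref{propFunctorOperad}, observe that the combinatorial definition of $K(K_1,\ldots,K_m)$ is manifestly natural in $(K;K_1,\ldots,K_m)$ with respect to simplicial isomorphisms carried by a bijection of $[m]$, and then pass back to polytopes using the fact from Section~\ref{sectPrelim} that $K_P$ is a complete invariant of the combinatorial type of $P$. The paper states this as an unproved corollary, so there is no alternative argument to compare against; yours fills the gap in the expected way.

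The one point where your write-up goes slightly astray is the closing discussion of labelings. You rightly notice that a combinatorial equivalence $P\cong Q$ induces a permutation $\sigma$ of $[m]$, and that the composition needs the inner complexes to line up with $\sigma$. But your resolution --- that a compatible system is ``automatically available'' in cases of interest such as $K_{P_i}=\partial\Delta_{[l_i]}$ --- misplaces the fix and gives the impression that the corollary might fail in general. The compatibility is not a feature of favorable examples; it is part of the hypothesis. The clause ``$P, P_1,\ldots,P_m$ are combinatorially equivalent to $Q,Q_1,\ldots,Q_m$ respectively'' must be read as a single compatible system in which the equivalence $P\to Q$ matches the $i$-th facet $P\cap\{x_i=0\}$ of $P$ with the $i$-th facet of $Q$ for every $i\in[m]$; otherwise the shared index $i$ on $P_i$ and $Q_i$ carries no meaning, and the final assertion --- that composition descends to a well-defined operation on combinatorial polytopes --- would not even be well-posed, since the operation inherently pairs each facet label with an inner polytope. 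Under that reading your naturality lemma applies with $\sigma=\id$ and the argument closes in full generality, with no need to restrict to special cases.
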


\ex\label{examplSimplicialWedge} A nontrivial example of the
composition is the \textbf{iterated simplicial wedge} construction
as defined in \cite{BBCGit}. Let $K$ be a simplicial complex on
$m$ vertices and $(l_1,\ldots,l_m)$
--- an array of natural numbers. Consider the simplicial complex
$K(l_1,\ldots,l_m) =
K(\partial\Delta_{[l_1]},\ldots,\partial\Delta_{[l_m]})$.
\\
\\
If $P$ is a polytope, then $K_P(l_1,\ldots,l_m) =
K_P(\partial\Delta_{[l_1]},\ldots,\partial\Delta_{[l_m]}) =
K_P(K_{\triangle_{[l_1]}},\ldots,K_{\triangle_{[l_m]}}) =
K_{P(\triangle_{[l_1]},\ldots, \triangle_{[l_m]})} =
K_{P(l_1,\ldots,l_m)}$ by proposition \ref{propFunctorOperad}. In
section \ref{sectCompHomotopySpheres} we will show that for every
$m$-tuple $(l_1,\ldots,l_m)$ simplicial complex $K$ is a
combinatorial sphere whenever $K(l_1,\ldots,l_m)$ is a
combinatorial sphere. Then $P$ is simple whenever
$P(l_1,\ldots,l_m)$ is simple (see example
\ref{examplPsimpleKPsphere}).

\begin{prop}[Associativity law for the composition of simplicial
complexes]\label{propAssocSimpIter} Let $K$ be a simplicial
complex on $m$ vertices, $K_1,\ldots,K_m$ be simplicial complexes
on $l_1,\ldots,l_m$ vertices respectively and
$K_{11},\ldots,K_{1l_1},K_{21},\ldots,K_{2l_2},\ldots,
K_{m1},\ldots,K_{ml_m}$ --- simplicial complexes on sets
$[r_{sj_s}]$ of vertices. Then
\begin{multline}K(K_1(K_{11},\ldots,K_{1l_1}),\ldots,K_m(K_{m1},\ldots,K_{ml_m}))
=\\
K(K_1,\ldots,K_m)(K_{11},\ldots,K_{1l_1},\ldots,K_{m1},\ldots,K_{ml_m})
\end{multline}
as the complexes on the set $\bigsqcup_{s,j_s}[r_{sj_s}]$.
\end{prop}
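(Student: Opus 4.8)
The cleanest route is the combinatorial one: unwind the explicit description of the composition "$I = I_1\sqcup\dots\sqcup I_m$ is a simplex of $K(K_1,\dots,K_m)$ iff $\{i : I_i\notin K_i\}\in K$" and show that both sides of the claimed identity yield the same system of subsets of $\bigsqcup_{s,j_s}[r_{sj_s}]$. Both complexes are complexes on the same vertex set, so it suffices to check that they have the same simplices.

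First I would fix a subset $I$ of $\bigsqcup_{s,j_s}[r_{sj_s}]$ and write it uniformly as $I = \bigsqcup_{s=1}^{m}\bigsqcup_{j_s=1}^{l_s} I_{sj_s}$ with $I_{sj_s}\subseteq [r_{sj_s}]$. For the right-hand side, by definition of composition applied to the outer complex $K(K_1,\dots,K_m)$ on $\bigsqcup_s [l_s]$ vertices with inner complexes $\underline{K_{sj_s}}$, the set $I$ is a simplex iff the set $\{(s,j_s) : I_{sj_s}\notin K_{sj_s}\}$ is a simplex of $K(K_1,\dots,K_m)$. Now apply the defining condition of $K(K_1,\dots,K_m)$ once more: grouping $\{(s,j_s):I_{sj_s}\notin K_{sj_s}\}$ into its $m$ blocks, its $s$-th block is $B_s := \{j_s : I_{sj_s}\notin K_{sj_s}\}\subseteq [l_s]$, and the set is a simplex of $K(K_1,\dots,K_m)$ iff $\{s : B_s\notin K_s\}\in K$. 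So the RHS simplex condition reads: $\{\,s : \{j_s : I_{sj_s}\notin K_{sj_s}\}\notin K_s\,\}\in K$.

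Then I would compute the left-hand side the same way. The LHS is the composition of $K$ with the inner complexes $\widehat K_s := K_s(K_{s1},\dots,K_{sl_s})$ on $\bigsqcup_{j_s}[r_{sj_s}]$. By definition, $I$ is a simplex of the LHS iff $\{s : I^{(s)}\notin \widehat K_s\}\in K$, where $I^{(s)} := \bigsqcup_{j_s} I_{sj_s}$. Unwinding $I^{(s)}\in \widehat K_s = K_s(K_{s1},\dots,K_{sl_s})$ by the defining condition gives $I^{(s)}\in\widehat K_s \iff \{j_s : I_{sj_s}\notin K_{sj_s}\}\in K_s$. Substituting, $I$ is a simplex of the LHS iff $\{\,s : \{j_s : I_{sj_s}\notin K_{sj_s}\}\notin K_s\,\}\in K$, which is exactly the condition obtained for the RHS. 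Hence the two complexes coincide.

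The argument is essentially a bookkeeping exercise in nested membership conditions, so there is no serious obstacle — the only thing requiring care is the double-index notation and making sure the "block decomposition" of a subset of a disjoint union is used consistently at each of the two nesting levels. (One may alternatively note that the statement is forced by the associativity of the polyhedral-join operation $\Z_K^{\ast}(-)$ together with Definition \ref{definComposedSimpComp}, i.e. by the general fact that $\Z_K^{\ast}(\underline{\Z_{K_s}^{\ast}(\underline{(X_{sj_s},A_{sj_s})})}) = \Z_{K(\underline{K_s})}^{\ast}(\underline{(X_{sj_s},A_{sj_s})})$ applied to the pairs $(\Delta_{[r_{sj_s}]},K_{sj_s})$, after checking that the inner joins $\Z_{K_s}^{\ast}(\underline{(\Delta_{[r_{sj_s}]},K_{sj_s})})$ sit inside $\Delta_{\bigsqcup_{j_s}[r_{sj_s}]}$ as the subcomplex $K_s(K_{s1},\dots,K_{sl_s})$; but the direct combinatorial check above is shorter.)
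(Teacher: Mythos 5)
Your proof is correct and takes essentially the same approach as the paper: a direct combinatorial unwinding of the nested membership conditions for the composition. The only cosmetic difference is that you work with the exact set $\{i : A_i\notin K_i\}\in K$ throughout, whereas the paper phrases the same condition existentially ($\exists I\in K$ such that $A_i\in K_i$ for all $i\notin I$); the two are equivalent because $K$ is closed under passing to subsets.
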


\begin{proof}
Both complexes have the same set of vertices
$V=([r_{11}]\sqcup\ldots\sqcup[r_{1l_1}])\sqcup\ldots\sqcup([r_{m1}]\sqcup\ldots\sqcup[r_{ml_m}])$
Let $A$ be the subset of $V$ so $A = (A_{11}\sqcup\ldots\sqcup
A_{1l_1})\sqcup\ldots\sqcup(A_{m1}\sqcup\ldots\sqcup A_{ml_m})$,
where $A_{sj_s}\subseteq [r_{sj_s}]$. The chain of equivalent
conditions is written below.
\begin{multline}
A\in K(K_1(K_{11},\ldots,K_{1l_1}),\ldots,K_m(K_{m1},\ldots,K_{ml_m})) \Leftrightarrow\\
\exists I\in K \forall s\notin I \colon (A_{s1}\sqcup\ldots\sqcup
A_{sl_s})\in
K_s(K_{s1},\ldots,K_{sl_s})\Leftrightarrow\\
\exists I\in K \forall s\notin I \exists I_s\in K_s \forall
i_s\notin I_s \colon A_{si_s}\in K_{si_s}
\Leftrightarrow\\
\exists J\in K(K_1,\ldots,K_m) \forall s \forall i_s\in
[l_s]\setminus J \colon A_{si_s}\in K_{si_s} \Leftrightarrow\\
A\in
K(K_1,\ldots,K_m)(K_{11},\ldots,K_{1l_1},\ldots,K_{m1},\ldots,K_{ml_m}).
\end{multline}

This finishes the proof.
\end{proof}

\rem As in the case of polytopes simplicial complexes form an
operad. The simplicial complex $K$ on $m$ vertices can be viewed
as an $m$-adic operation. The ``identity operation'' is given by
the complex $o^1$ (see example \ref{examplOl}) since
$K(o^1,\ldots,o^1) = o^1(K) = K$.

\begin{cor}
The composition can be constructed by steps. More precisely, let
$K_i$ be the complex on $[l_i]$, then $$K(K_1,\ldots,K_m) =
K(o^1,\ldots, K_i,\ldots,o^1)(K_1,\ldots, K_{i-1},
\underbrace{o^1,\ldots,o^1}_{l_i},K_{i+1},\ldots,K_m).$$
\end{cor}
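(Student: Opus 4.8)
The statement is a corollary of the associativity law (Proposition \ref{propAssocSimpIter}), so the plan is simply to plug the right arguments into that law and simplify using the identity $K(o^1,\ldots,o^1)=o^1(K)=K$ from Example \ref{examplO1}.  Concretely, I would apply Proposition \ref{propAssocSimpIter} with outer complex $K' := K(o^1,\ldots,K_i,\ldots,o^1)$ — that is, $K$ with $o^1$ in every slot except the $i$-th, where we put $K_i$ — and with inner complexes chosen as follows: in the slots of $K'$ coming from the $j$-th copy of $o^1$ ($j\neq i$) there is exactly one vertex, and I feed it the complex $K_j$; in the $l_i$ slots of $K'$ coming from $K_i$ I feed $o^1$ each time.

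First I would record that $K'$ has vertex set $[1]\sqcup\cdots\sqcup[l_i]\sqcup\cdots\sqcup[1]$ (one vertex for each $j\neq i$, and $l_i$ vertices in the $i$-th block), which after discarding the singleton blocks' labels is naturally indexed so that the inner composition on the right-hand side has the arguments $K_1,\ldots,K_{i-1},\underbrace{o^1,\ldots,o^1}_{l_i},K_{i+1},\ldots,K_m$ in exactly the stated order.  Then the left-hand side of the associativity identity becomes $K'$ composed with, for each $j\neq i$, the complex $o^1(K_j)=K_j$ (since the $j$-th block of $K'$ is a single vertex, and $o^1(K_j)=K_j$ by Example \ref{examplO1}), and for the $i$-th block, the $l_i$-fold list $K_i(o^1,\ldots,o^1)=K_i$.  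So the left-hand side is just $K$ with $K_j$ in each slot, i.e. $K(K_1,\ldots,K_m)$.  The right-hand side is, by definition, $K'(K_1,\ldots,K_{i-1},o^1,\ldots,o^1,K_{i+1},\ldots,K_m)$, which is exactly the right-hand side of the corollary.  Equating the two gives the claim.

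There is no real obstacle here; the only thing requiring a moment's care is bookkeeping of the vertex sets — making sure that the relabeling implicit in "$K$ on $m$ vertices with $o^1$'s substituted still has $l_i$ vertices in the $i$-th block and one vertex elsewhere" lines up with the ordering of arguments written in the corollary.  I would state this bookkeeping explicitly and then cite Proposition \ref{propAssocSimpIter} and Example \ref{examplO1} to conclude.

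\begin{prooff}
Apply the associativity law (Proposition \ref{propAssocSimpIter}) to the simplicial complex $K' = K(o^1,\ldots,o^1,K_i,o^1,\ldots,o^1)$ (with $K_i$ in the $i$-th slot), viewed as a complex on the vertex set $[1]\sqcup\cdots\sqcup[1]\sqcup[l_i]\sqcup[1]\sqcup\cdots\sqcup[1]$, and to the inner complexes $K_1,\ldots,K_{i-1}$ assigned to the one-vertex blocks with indices $<i$, the complexes $o^1,\ldots,o^1$ ($l_i$ of them) assigned to the $i$-th block, and $K_{i+1},\ldots,K_m$ assigned to the one-vertex blocks with indices $>i$. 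The right-hand side of the associativity identity is then, by definition, precisely
$$K(o^1,\ldots, K_i,\ldots,o^1)(K_1,\ldots, K_{i-1},
\underbrace{o^1,\ldots,o^1}_{l_i},K_{i+1},\ldots,K_m).$$
On the other hand, by Example \ref{examplO1} we have $o^1(K_j) = K_j$ for each $j\neq i$, and $K_i(o^1,\ldots,o^1) = K_i$; hence the left-hand side of the associativity identity equals $K(K_1,\ldots,K_m)$. Equating the two sides gives the claim.
\end{prooff}
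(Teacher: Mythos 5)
Your proposal is correct and follows exactly the route the paper intends: the corollary is stated without proof immediately after Proposition~\ref{propAssocSimpIter} and Example~\ref{examplO1}, and your argument — instantiate the associativity law with $K$ outside, $o^1$ in every middle slot except the $i$-th (where $K_i$ sits), then feed $K_j$ into the singleton blocks and $o^1$'s into the $i$-th block, and simplify with $o^1(K_j)=K_j$ and $K_i(o^1,\ldots,o^1)=K_i$ — is precisely that derivation. The brief remark on relabeling the vertex set $[1]\sqcup\cdots\sqcup[1]$ ($l_i$ copies) back to $[l_i]$ is the only bookkeeping point, and you handle it appropriately.
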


\begin{cor}[{\cite[sect.2]{BBCGit}}]\label{corIterSphereIsSphere}
Let $l_i$ be natural numbers. Then $$K(l_1,\ldots,l_m) =
K(1,\ldots, l_i,\ldots,1)(l_1,\ldots, l_{i-1},
\underbrace{1,\ldots,1}_{l_i},l_{i+1},\ldots,l_m).$$
\end{cor}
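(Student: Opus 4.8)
The plan is to obtain this identity as a direct specialization of the preceding corollary (the step-by-step construction of compositions), using two bookkeeping conventions recorded above: $\partial\Delta_{[1]} = o^1$, and $K(l_1,\ldots,l_m) = K(\partial\Delta_{[l_1]},\ldots,\partial\Delta_{[l_m]})$ from Example \ref{examplSimplicialWedge}. In fact the whole statement is just the preceding corollary read off for a particular list of arguments, so no new combinatorics is needed; equivalently one could invoke the associativity law, Proposition \ref{propAssocSimpIter}, directly.

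First I would translate both sides of the asserted equality into the language of $\partial\Delta$'s. The left-hand side is $K(\partial\Delta_{[l_1]},\ldots,\partial\Delta_{[l_m]})$ by definition. On the right-hand side, since $1$ abbreviates $\partial\Delta_{[1]} = o^1$, the outer complex $K(1,\ldots,l_i,\ldots,1)$ is $K(o^1,\ldots,o^1,\partial\Delta_{[l_i]},o^1,\ldots,o^1)$ with $\partial\Delta_{[l_i]}$ placed in the $i$-th slot; this is a complex on the ground set $[1]\sqcup\cdots\sqcup[1]\sqcup[l_i]\sqcup[1]\sqcup\cdots\sqcup[1]$, i.e. on $m-1+l_i$ vertices, of which exactly $l_i$ come from the $i$-th slot. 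Likewise the argument tuple $(l_1,\ldots,l_{i-1},\underbrace{1,\ldots,1}_{l_i},l_{i+1},\ldots,l_m)$ corresponds to the list of complexes $\partial\Delta_{[l_1]},\ldots,\partial\Delta_{[l_{i-1}]},\underbrace{o^1,\ldots,o^1}_{l_i},\partial\Delta_{[l_{i+1}]},\ldots,\partial\Delta_{[l_m]}$, matching those $m-1+l_i$ vertices slot by slot.

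Next I would apply the preceding corollary with $K_j := \partial\Delta_{[l_j]}$ for every $j\in[m]$. Its conclusion, written for this choice, reads
\[
K(\partial\Delta_{[l_1]},\ldots,\partial\Delta_{[l_m]}) = K(o^1,\ldots,\partial\Delta_{[l_i]},\ldots,o^1)\bigl(\partial\Delta_{[l_1]},\ldots,\partial\Delta_{[l_{i-1}]},\underbrace{o^1,\ldots,o^1}_{l_i},\partial\Delta_{[l_{i+1}]},\ldots,\partial\Delta_{[l_m]}\bigr),
\]
and substituting back the identifications of the previous paragraph turns the right-hand side into $K(1,\ldots,l_i,\ldots,1)(l_1,\ldots,l_{i-1},\underbrace{1,\ldots,1}_{l_i},l_{i+1},\ldots,l_m)$, which is the desired formula. (Alternatively, apply Proposition \ref{propAssocSimpIter} to $K(\partial\Delta_{[l_1]},\ldots,\partial\Delta_{[l_m]})$ after rewriting $\partial\Delta_{[l_j]} = o^1(\partial\Delta_{[l_j]})$ for $j\neq i$ and $\partial\Delta_{[l_i]} = \partial\Delta_{[l_i]}(o^1,\ldots,o^1)$ in slot $i$, using Example \ref{examplO1}.)

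There is essentially no obstacle here; the only point requiring care is the index bookkeeping — verifying that both sides are simplicial complexes on the common ground set $[l_1]\sqcup\cdots\sqcup[l_m]$, and that the $l_i$ inserted copies of $o^1$ fill precisely the vertices that the outer complex $K(1,\ldots,l_i,\ldots,1)$ allotted to its $i$-th slot. This is immediate from the combinatorial definition of composition, but I would spell it out so that the matching of vertex sets is transparent.
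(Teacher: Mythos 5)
Your proposal is correct and is precisely the argument the paper intends: the corollary is presented without proof as an immediate specialization of the preceding (unnumbered) corollary, obtained by setting $K_j = \partial\Delta_{[l_j]}$ and using the conventions $\partial\Delta_{[1]} = o^1$ and $K(l_1,\ldots,l_m) = K(\partial\Delta_{[l_1]},\ldots,\partial\Delta_{[l_m]})$. The alternative route you mention via Proposition \ref{propAssocSimpIter} and the identities $o^1(L) = L$, $L(o^1,\ldots,o^1) = L$ is equally valid and is in fact how the preceding corollary itself is justified.
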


One can ``blow up'' vertices step by step. The operation
$K(l_1,1,1,\ldots,1)$ can be described geometrically
\cite{BBCGit}, \cite{PB}:
$$
K(l,1,1,\ldots,1) = K_{[m]\setminus \{1\}}\ast \partial
\Delta_{[l]}\cup (\link_K\{1\})\ast\Delta_{[l]}.
$$
The figure \ref{FigPentagon} illustrates the situation when $K$ is
the boundary of 5-gon and $l=2$.

\begin{figure}[h]
\begin{center}
\includegraphics[scale=0.25]{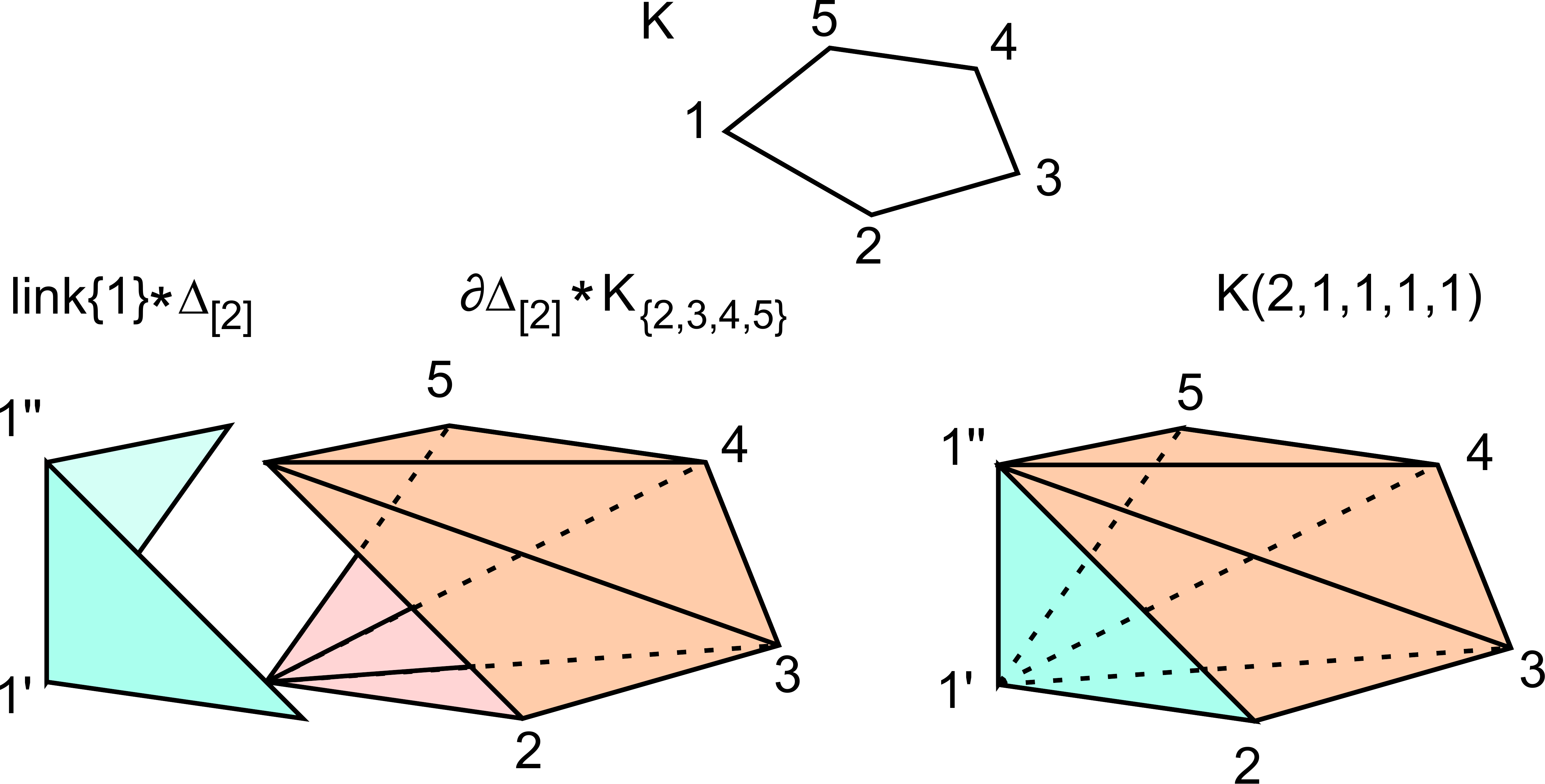}
\end{center}
\caption{Doubling a vertex in a boundary of a
pentagon}\label{FigPentagon}
\end{figure}

It can be directly checked that
$K(l,1,1,\ldots,1)\cong_{PL}\Sigma^{l-1}K\cong_{PL} K\ast
\partial\Delta_{[l]}$. In the case when $K$ is the boundary of
simplicial polytope the complex $K(l,1,1,\ldots,1)$ is also the
boundary of a polytope \cite[Th.2.3]{BBCGit}. Indeed, if
$K=\partial Q$, then $K=K_{Q^*}$ for dual simple polytope $Q^*$,
then $K(l_1,\ldots,l_m)=K_{Q^*(l_1,\ldots,l_m)} = \partial
(Q^*(l_1,\ldots,l_m))^*$. Then, using corollary
\ref{corIterSphereIsSphere} inductively, we get the following.

\begin{cor}
If $K$ is a simplicial sphere (boundary of a simplicial polytope,
triangulated topological sphere, homological sphere) then so is
$K(l_1,\ldots,l_m)$ for each array $l_1,\ldots,l_m$ of natural
numbers.
\end{cor}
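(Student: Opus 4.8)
The plan is to run Corollary \ref{corIterSphereIsSphere} inductively, reducing the general composition $K(l_1,\ldots,l_m)$ to a sequence of single‑vertex blow‑ups $K\mapsto K(1,\ldots,l_i,\ldots,1)$, each of which, by the geometric description recalled just before the statement, only changes the PL‑homeomorphism type by taking a join with $\partial\Delta_{[l_i]}$. The case ``$K$ is the boundary of a simplicial polytope'' is immediate from Proposition \ref{propFunctorOperad}: if $K=\partial Q$ then $K=K_{Q^*}$ with $Q^*$ a simple polytope, so by Example \ref{examplSimplicialWedge} and Proposition \ref{propFunctorOperad}
\[
K(l_1,\ldots,l_m)=K_{Q^*}(l_1,\ldots,l_m)=K_{Q^*(l_1,\ldots,l_m)};
\]
the polytope $Q^*(l_1,\ldots,l_m)$ is simple by Example \ref{examplIterJsPoly} (see also \cite[Th.2.3]{BBCGit}), hence its nerve‑complex is the boundary of the dual simplicial polytope by Example \ref{examplPsimpleKPsphere}.

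For the three remaining notions I would prove, by induction on $N=\sum_{i=1}^m(l_i-1)$, the PL statement
\[
K(l_1,\ldots,l_m)\cong_{PL}K\ast\partial\Delta_{[l_1]}\ast\cdots\ast\partial\Delta_{[l_m]},
\]
where $\partial\Delta_{[1]}=o^1=S^{-1}$ contributes nothing to the join, so that for $N=0$ both sides equal $K$ (Example \ref{examplO1}). If $N>0$, pick $i$ with $l_i>1$; by Corollary \ref{corIterSphereIsSphere}
\[
K(l_1,\ldots,l_m)=K(1,\ldots,l_i,\ldots,1)\bigl(l_1,\ldots,l_{i-1},\underbrace{1,\ldots,1}_{l_i},l_{i+1},\ldots,l_m\bigr),
\]
and $K(1,\ldots,l_i,\ldots,1)\cong_{PL}K\ast\partial\Delta_{[l_i]}$ by the displayed description of a single blow‑up (written there for the first vertex, but relabelling the vertex set changes nothing). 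The array governing the outer composition has invariant $N-(l_i-1)$, so the induction hypothesis applies and gives the claim.

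It then remains to observe that a join of spheres is again a sphere of the expected dimension $d-1+\sum(l_i-1)$: since $\partial\Delta_{[l_i]}\cong_{PL}S^{l_i-2}$, this follows from the fact that the classes of simplicial (PL) spheres, of triangulated topological spheres, and of homological spheres are each closed under the join operation and under PL homeomorphism. Closure under PL homeomorphism is built into the definition for the first two notions; for homological spheres it is the standard fact that the Gorenstein* condition depends only on the (PL) homeomorphism type of the underlying space, via the identification of reduced homology of links with local homology. Closure under joins for homological spheres uses the Künneth‑type formula for the reduced homology of a join together with the identity $\link_{A\ast B}(\sigma\sqcup\tau)=\link_A\sigma\ast\link_B\tau$, so that every link of $A\ast B$ is itself a join of links of $A$ and of $B$. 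This last point is the one that needs care — one must verify the sphere‑homology condition for \emph{all} links, not just for the whole complex, and invoke the topological invariance of Gorenstein*; the polytope case and the inductive reduction itself are routine on top of Corollary \ref{corIterSphereIsSphere} and the already‑recorded PL description of a single blow‑up.
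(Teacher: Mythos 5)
Your proposal follows the paper's proof essentially verbatim: the paper likewise reduces via Corollary \ref{corIterSphereIsSphere} to a single-vertex blow-up, records that $K(l,1,\ldots,1)\cong_{PL}\Sigma^{l-1}K\cong_{PL}K\ast\partial\Delta_{[l]}$, handles the simplicial-polytope case through $K_{Q^*(l_1,\ldots,l_m)}$, and concludes by induction. You spell out a bit more explicitly why the join/suspension and PL homeomorphism preserve each of the three sphere notions (especially the Gorenstein* link condition), but the argument and the decomposition are the same.
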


There is the converse question: for which $K,K_1,\ldots,K_m$ the
composed complex $K(K_1, \ldots, K_m)$ is a sphere (in any sense)?
We postpone this question till section
\ref{sectCompHomotopySpheres}.

\section{Polyhedral products given by composed
complexes}\label{sectPolyProducts}

%? Везде записать K(\underline{K_i})
%? Write composed simplicial complex and polytope everywhere
%? Add K(l_1,\ldots,l_m)
%? Change P(j_1,\ldots,j_m) on P(l_1,\ldots,l_m) add connection with corresponding complexes
%? Change 1,\ldots,m to [m] everywhere

In this section we describe the polyhedral products given by the
composed simplicial complexes.

\begin{prop}\label{propPolyProdOfComposed}
Let $K$ be a complex with $m$ vertices and let $\{K_i\}_{i\in[m]}$
be simplicial complexes with $l_i$ vertices. Consider topological
pairs, indexed by the elements of the set $\bigsqcup_{i}[l_i]$:
$$(X_{11},A_{11}), \ldots,(X_{1l_1},A_{1l_1}), \ldots,
(X_{m1},A_{m1}), \ldots,(X_{ml_m},A_{ml_m}).$$ For each $i\in[m]$
let $Y_i=X_{i1}\times X_{i2}\times\ldots\times X_{il_i}$ and $Z_i
= \Z_{K_i}(\underline{(X_{ij},A_{ij})}_{j\in[l_i]})\subseteq Y_i$.
Then
$$
\Z_{{K(\Ks)}}(\underline{(X_{ij},A_{ij})}_{\substack{i\in[m]\\j\in[l_i]}})
= \Z_K(\underline{(Y_i,Z_i)}_{i\in[m]})
$$
as subsets of $\prod_{i,j}X_{ij} = \prod_iY_i$.
\end{prop}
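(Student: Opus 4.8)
The plan is to prove the set-theoretic equality directly, unwinding the definitions of both polyhedral products on both sides and showing that a point $(\ve{x}_{ij})\in\prod_{i,j}X_{ij}=\prod_i Y_i$ lies in one side exactly when it lies in the other. Write a point as $(y_1,\ldots,y_m)$ with $y_i=(x_{i1},\ldots,x_{il_i})\in Y_i$. By definition, $(y_1,\ldots,y_m)\in \Z_K(\underline{(Y_i,Z_i)})$ iff there is a simplex $J\in K$ with $y_i\in Y_i$ for $i\in J$ (no condition) and $y_i\in Z_i$ for $i\notin J$; equivalently, the set $\{i\in[m]\mid y_i\notin Z_i\}$ is a simplex of $K$. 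Now $y_i\in Z_i=\Z_{K_i}(\underline{(X_{ij},A_{ij})})$ means exactly that $\{j\in[l_i]\mid x_{ij}\notin A_{ij}\}\in K_i$. So set $I_i=\{j\in[l_i]\mid x_{ij}\notin A_{ij}\}$; then membership in the right-hand side is equivalent to $\{i\in[m]\mid I_i\notin K_i\}\in K$.

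Next I would run the same computation for the left-hand side using the purely combinatorial description of $K(\Ks)$ given after Definition \ref{definComposedSimpComp}. By definition $\Z_{K(\Ks)}(\underline{(X_{ij},A_{ij})})$ is the union over simplices $I\in K(\Ks)$ of the subsets $V_I=\prod_{ij}C_{ij}$ with $C_{ij}=X_{ij}$ for $(ij)\in I$ and $C_{ij}=A_{ij}$ otherwise; thus $(\ve{x}_{ij})$ lies in it iff $\{(ij)\mid x_{ij}\notin A_{ij}\}\in K(\Ks)$. Writing this candidate simplex as $I_1\sqcup\cdots\sqcup I_m$ with $I_i=\{j\in[l_i]\mid x_{ij}\notin A_{ij}\}$ as above, the combinatorial description of the composition says precisely that $I_1\sqcup\cdots\sqcup I_m\in K(\Ks)$ iff $\{i\in[m]\mid I_i\notin K_i\}\in K$. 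This is literally the same condition obtained for the right-hand side, so the two subsets of $\prod_{i,j}X_{ij}$ coincide.

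The only genuine point requiring care — and the step I would expect to be the main (modest) obstacle — is the bookkeeping that identifies the join/product ambient spaces correctly and checks that "$x_{ij}\notin A_{ij}$" is the right invariant to extract on both sides. On the right, a point of $\Z_K(\underline{(Y_i,Z_i)})$ is a tuple $(y_1,\ldots,y_m)$ and the relevant predicate is "$y_i\in Z_i$ or not", which I must translate down to the coordinates $x_{ij}$ via the definition of $Z_i$ as a polyhedral product inside $Y_i$; this translation is exactly the equivalence $y_i\in Z_i \Leftrightarrow \{j\mid x_{ij}\notin A_{ij}\}\in K_i$, which itself is just the defining property of a polyhedral product applied to $K_i$. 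Once one observes that the "non-membership set" $I_i$ is the common combinatorial datum computed from the point in both descriptions, the equality is forced, and no topology beyond the definitions is needed. I would therefore present the proof as a short chain of equivalences, in the style of the proof of Proposition \ref{propAssocSimpIter}, starting from "$(\ve{x}_{ij})\in\Z_{K(\Ks)}(\underline{(X_{ij},A_{ij})})$" and ending at "$(\ve{x}_{ij})\in\Z_K(\underline{(Y_i,Z_i)})$", with the combinatorial description of $K(\Ks)$ and the definition of the polyhedral product as the two justifications invoked along the way.
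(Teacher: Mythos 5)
Your proof is correct and is exactly the chain-of-equivalences argument the paper has in mind: the paper declines to spell it out, saying only that it is ``similar to Proposition~\ref{propAssocSimpIter} and rather straightforward,'' and your unwinding of both sides to the common condition $\{i\in[m]\mid I_i\notin K_i\}\in K$ with $I_i=\{j\in[l_i]\mid x_{ij}\notin A_{ij}\}$ is precisely that argument. (One tiny slip in your commentary: you mention ``join/product ambient spaces,'' but Proposition~\ref{propPolyProdOfComposed} involves only polyhedral \emph{products}, not joins; this does not affect the proof itself.)
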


The proof is similar to the proposition \ref{propAssocSimpIter}
and rather straightforward.

\ex In the case $K_i=\partial\Delta_{[l_i]}$ the proposition
\ref{propPolyProdOfComposed} coincides with \cite[Theorem
7.2]{BBCGit}.

\ex Let $K, K_1,\ldots,K_m$ be as before. Consider the set of
pairs $\{(X_{ij},A_{ij})\}_{\substack{i\in[m]\\j\in[l_i]}}$, where
$(X_{ij},A_{ij}) = \{(D^2,S^1)\}$ for each $i\in[m]$ and $j\in
[l_i]$. Then
$$\Z_{K(\Ks)}(D^2,S^1)=\Z_K(\underline{(D^{2l_i},
\Z_{K_i})}_{i\in[m]}).$$

\ex As the particular case of the previous example consider $K_i =
\partial \Delta_{[l_i]}$. Then
$$
\Z_{K(l_1,\ldots,l_m)}(D^2,S^1)=\Z_K(\underline{(D^{2l_i},S^{2l_i-1})}_{i\in
[m]})
$$
since $\Z_{\partial\Delta_{[l_i]}}(D^2,S^1) = S^{2l_i-1}$ which
coincides with \cite[Corollary 7.3]{BBCGit}.

The similar statement holds for real moment-angle complexes
$$
\Z_{K(l_1,\ldots,l_m)}(D^1,S^0)=\Z_K(\underline{(D^{l_i},S^{l_i-1})}_{i\in
[m]})
$$

In particular
$$
\Z_{2K}(D^1,S^0) = \Z_{K(2,\ldots,2)}(D^1,S^0)=\Z_K(D^2,S^1).
$$

This idea was used by Yu.Ustinovsky in his work \cite{UstTR} to
prove the toral rank conjecture for moment-angle spaces.

\section{Combinatorial and topological properties of composed complexes}\label{sectCompHomotopySpheres}

In \cite{BBCG} was proved that
$\Z^{\wedge}_K(\underline{(X_i,A_i)})\simeq \Sigma(K\wedge
A_1\wedge\ldots\wedge A_m)$ if $X_i\simeq\pt$ for each $i\in[m]$.
The following statement can be proved by the similar argument.

\begin{prop}
Let $(X_i,A_i)$ be topological pairs for $i\in[m]$ with $X_i$
contractible. Then for every $K$ on $[m]$ the space
$\Z^{\ast}_K(\underline{(X_i,A_i)})$ is homotopy equivalent to
$A_1\ast\ldots\ast A_m\ast K$.
\end{prop}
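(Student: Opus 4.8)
The plan is to mimic the known polyhedral smash product computation from \cite{BBCG}, transferring it to the polyhedral join via a comparison of the join with the smash of suspensions. First I would recall the standard homotopy equivalence relating joins and smashes: for well-pointed spaces one has $X\ast Y\simeq \Sigma(X\wedge Y)$, and more generally $X_1\ast\cdots\ast X_m\simeq \Sigma^{m-1}(X_1\wedge\cdots\wedge X_m)$ after choosing basepoints (which we may do by adding a disjoint basepoint if necessary, since every $A_i\subseteq X_i$ is nonempty when $X_i$ is contractible — if some $A_i=\varnothing$ one treats it separately, noting $\Z^\ast_K$ then only sees simplices $I$ with $i\in I$). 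The key observation is that the polyhedral join $\Z^\ast_K(\underline{(X_i,A_i)})$ is built out of the subspaces $V_I=C_1\ast\cdots\ast C_m$ in exactly the same diagrammatic way that the polyhedral smash product $\Z^\wedge_K(\underline{(X_i,A_i)})$ is built out of smash products; so I would first establish a homotopy equivalence $\Z^\ast_K(\underline{(X_i,A_i)})\simeq \Sigma^{m-1}\Z^\wedge_K(\underline{(X_i,A_i)})$ compatible with the defining unions.

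The cleanest route to that comparison is to filter both spaces by the skeleta of $K$ (or by the poset of simplices of $K$) and argue by induction, using that the join and the smash-after-suspension functors both turn the relevant pushout squares of cofibrations into pushout squares, and that on each stratum $V_I$ the map $C_1\ast\cdots\ast C_m\to\Sigma^{m-1}(C_1\wedge\cdots\wedge C_m)$ is a homotopy equivalence natural in the $C_i$. Concretely I would present $\Z^\ast_K$ as an (iterated) homotopy colimit over the face poset of $K$ of the diagram $I\mapsto V_I$, do the same for $\Z^\wedge_K$, and observe the diagrams are related by an objectwise equivalence; hocolim preserves objectwise equivalences, giving $\Z^\ast_K(\underline{(X_i,A_i)})\simeq \Sigma^{m-1}\Z^\wedge_K(\underline{(X_i,A_i)})$. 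Then I invoke the cited result from \cite{BBCG}: when all $X_i$ are contractible, $\Z^\wedge_K(\underline{(X_i,A_i)})\simeq \Sigma(K\wedge A_1\wedge\cdots\wedge A_m)$, hence
\[
\Z^\ast_K(\underline{(X_i,A_i)})\simeq \Sigma^{m-1}\Sigma(K\wedge A_1\wedge\cdots\wedge A_m)=\Sigma^m(K\wedge A_1\wedge\cdots\wedge A_m),
\]
and finally I rewrite the right-hand side back in join form: $\Sigma^m(K\wedge A_1\wedge\cdots\wedge A_m)\simeq K\ast A_1\ast\cdots\ast A_m$ again by the join--smash formula (with $m+1$ factors). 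Note that here one should be a little careful about which space gets suspended how many times, but since all that matters is the total number of smash factors ($m$ pairs plus $K$) versus the total number of suspensions, the bookkeeping works out: $m+1$ join factors correspond to $m$ suspensions of the $(m+1)$-fold smash.

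An alternative, more hands-on approach avoids homotopy colimits: replace each contractible $X_i$ by the cone $CA_i$ up to homotopy — more precisely, since $X_i$ is contractible and $A_i\hookrightarrow X_i$ is a cofibration, the pair $(X_i,A_i)$ is homotopy equivalent as a pair to $(CA_i,A_i)$ (this uses that a cofibration into a contractible space is a based map up to homotopy whose mapping cylinder retracts appropriately). Then $V_I=C_1\ast\cdots\ast C_m$ with $C_i\in\{CA_i,A_i\}$, and one computes directly: $CA_i\ast Z\simeq C(A_i\ast Z)$ is contractible, so the union defining $\Z^\ast_K$ collapses stratum by stratum; chasing through, the only "essential" part surviving is indexed by $K$ itself, and the classical identity $\operatorname{colim}_{I\in K}(C_1\ast\cdots\ast C_m)$ with cones on the $A_i$ outside $I$ yields $A_1\ast\cdots\ast A_m\ast K$ after identifying $K$ as $\Z^\ast_K$ of the pairs $(CA_i,A_i)$ contracted. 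The main obstacle in either route is the careful point-set bookkeeping with basepoints and cofibrancy — making the "join equals suspension of smash" equivalences natural enough to push through the colimit/union, and handling degenerate cases (empty $A_i$, or $X_i$ contractible but not a cone) without circularity. I would flag that as the place where "by the similar argument" in the statement hides the real work, and would either cite the precise lemmas from \cite{BBCG} about polyhedral smash products of cofibration pairs or reprove the one gluing lemma needed.
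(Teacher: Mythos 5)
The paper's proof avoids polyhedral smash products entirely. It writes $\Z^{\ast}_K(\underline{(X_i,A_i)})$ as $\colim\Psi$ for the diagram $\Psi\colon\cat(K)\to\Top$ over the face poset, $\Psi(I)=B_1\ast\cdots\ast B_m$ with $B_i=X_i$ for $i\in I$ and $B_i=A_i$ otherwise; it uses the projection lemma (all maps closed cofibrations) to replace $\colim$ by $\hocolim$; it then observes that $\Psi(I)$ is contractible for $I\neq\varnothing$ (one factor is the contractible $X_i$) and replaces $\Psi$ objectwise by the diagram $\Phi$ with $\Phi(\varnothing)=A_1\ast\cdots\ast A_m$ and $\Phi(I)=\pt$ otherwise; finally it identifies $\hocolim\Phi\simeq\Phi(\varnothing)\ast|K|$ by a lemma of Welker--Ziegler--\v{Z}ivaljevi\'{c}. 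Your Route 2 (cone off $X_i\simeq CA_i$, notice every stratum with $I\neq\varnothing$ is a cone, ``the only essential part is indexed by $K$'') is exactly this argument in spirit, but the words ``chasing through'' conceal precisely the step where the paper has to invoke the $\hocolim\Phi\simeq\Phi(\varnothing)\ast|K|$ lemma; that identification is the real content and should be stated, not waved at.

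Route 1 is a genuinely different strategy, and it has a real gap. The asserted intermediate equivalence $\Z^{\ast}_K(\underline{(X_i,A_i)})\simeq\Sigma^{m-1}\Z^{\wedge}_K(\underline{(X_i,A_i)})$ is not available off the shelf, and the ``hocolim preserves objectwise equivalences'' sentence does not deliver it: the polyhedral smash product is defined as a quotient (equivalently, a colimit of smash products over the face poset), so to compare you would need a \emph{coherent} family of equivalences $C_1\ast\cdots\ast C_m\to\Sigma^{m-1}(C_1\wedge\cdots\wedge C_m)$ commuting with the inclusion maps of the diagram, and such a family only exists after choosing compatible basepoints $a_i\in A_i$ with all pairs $(X_i,a_i)$, $(A_i,a_i)$ well-pointed. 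The proposition is stated for arbitrary pairs (no basepoints; $A_i$ may even be empty, in which case the smash product is meaningless), so Route 1 proves a weaker statement, and even under the added hypotheses the coherence lemma it relies on has roughly the same difficulty as the direct hocolim comparison it was meant to shortcut. The paper's direct route is both more general and cleaner; if you want to salvage Route 1, you must state the basepoint/cofibrancy hypotheses and prove the $\Sigma^{m-1}$-comparison as a separate lemma, at which point you will find yourself essentially reproducing the diagrammatic argument of the paper.
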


\begin{proof}
Let $\cat(K)$ be a small category associated to simplicial complex
$K$. The objects of $\cat(K)$ are the simplices of $K$ and
morphisms are inclusions.

Define a functor $\Psi\colon\cat(K)\to\Top$. For each $I\in
\Ob\cat(K)$ let $\Psi(I)$ be the space $U_I = B_1\ast
B_2\ast\ldots\ast B_m$, where $B_i = X_i$ if $i\in I$ and
$B_i=A_i$ otherwise. The morphism $\Psi(I\hookrightarrow I')$ is
given by the natural inclusion $U_I\hookrightarrow U_{I'}$.

Then $\colim \Psi\cong \Z^{\ast}_K(\underline{(X_i,A_i)})$. All
the maps in the diagram $\Psi$ are closed cofibrations. Therefore,
the projection lemma (see, e.g. \cite[proposition 3.1]{WZZ})
implies $\colim\Psi \simeq \hocolim\Psi$.

Consider the diagram $\Phi\colon\cat(K)\to \Top$ given by
$\Phi(\varnothing) = A_1\ast\ldots\ast A_m$, $\Phi(I)=\pt$ if
$I\neq\varnothing$. The values of $\Phi$ on the morphisms
$I\subseteq I'$ are defined in the unique way.

1) For each $I\in\Ob\cat(K)$ there is a homotopy equivalence
$h_I\colon\Psi(I)\to\Phi(I)$. Indeed, for $I=\varnothing$ we have
$\Psi(I) = A_1\ast\ldots\ast A_m = \Phi(I)$ so $h_{\varnothing}$
can be chosen to be an identity map. For $I\neq\varnothing$ we
have $\Psi(I) = B_1\ast\ldots\ast B_m$ where at least one set
$B_i$ is equal to $X_i$ thus contractible. Therefore the whole
join $B_1\ast\ldots\ast B_m$ is contractible. Then the unique map
$h_{I}\colon \Psi(I)\to\Phi(I) = \pt$ is a homotopy equivalence.

2) The maps $h_I\colon \Psi(I)\to\Phi(I)$ are coherent, therefore,
$\hocolim \Psi\simeq\hocolim \Phi$.

3) $\hocolim\Phi \simeq \Phi(\varnothing)\ast K \cong K\ast
A_1\ast\ldots\ast A_m$ (see \cite[lemma 3.4]{WZZ}). This fact can
be deduced from the constructive definition of a homotopy colimit.

4) The sequence of equivalences
$$\Z_K^{\ast}(\underline{(X_i,A_i)})\cong \colim\Psi\simeq\hocolim\Psi\simeq
\hocolim\Phi=K\ast A_1\ast\ldots\ast A_m$$
completes the proof.
\end{proof}

\begin{cor}\label{corComposIsJoin} For any simplicial complexes
$K,K_1,\ldots, K_m$ with nonempty sets of vertices we have a
homotopy equivalence $$K(\Ks)\simeq K\ast K_1\ast
K_2\ast\ldots\ast K_m.$$
\end{cor}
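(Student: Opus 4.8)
The plan is to derive Corollary \ref{corComposIsJoin} directly from the preceding Proposition by specializing the topological pairs. Recall that by Definition \ref{definComposedSimpComp} we have $K(\Ks) = \Z_K^{\ast}(\underline{(\Delta_{[l_i]},K_i)})$. So the first step is simply to apply the Proposition with $(X_i,A_i) = (\Delta_{[l_i]},K_i)$ for $i\in[m]$. The only hypothesis to check is that each $X_i = \Delta_{[l_i]}$ is contractible, which is immediate since a full simplex is a cone. The Proposition then yields a homotopy equivalence $K(\Ks) = \Z_K^{\ast}(\underline{(\Delta_{[l_i]},K_i)}) \simeq K_1\ast K_2\ast\ldots\ast K_m\ast K$, and commutativity (up to homeomorphism) of the join rearranges this as $K\ast K_1\ast\ldots\ast K_m$, which is exactly the claim.

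The one genuine point requiring care is the standing hypothesis ``nonempty sets of vertices.'' In the Proposition the pairs $(X_i,A_i)$ are arbitrary, and the join $A_1\ast\ldots\ast A_m\ast K$ is well-behaved; but when we specialize $A_i = K_i$ we must make sure the simplicial structure on $\Z_K^{\ast}(\underline{(\Delta_{[l_i]},K_i)})$ really does agree with the combinatorial complex $K(\Ks)$ from the explicit description, and here it matters that each $K_i$ lives on a nonempty vertex set $[l_i]$ (so that $\Delta_{[l_i]}$ makes sense) and that $K$ itself has vertices — an empty $K$ would make the polyhedral join empty. Note that $K_i$ itself is still allowed to be $o^{l_i}$ or even the void complex on $[l_i]$ (no simplices at all, not even $\varnothing$); in the latter degenerate case one reads $K_i\simeq\varnothing = S^{-1}$ in the join, consistently with the convention $S^{-1}=\varnothing$ used earlier. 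So the nonemptiness assumption is exactly what is needed to keep the identification $K(\Ks) = \Z_K^{\ast}(\underline{(\Delta_{[l_i]},K_i)})$ valid.

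There is essentially no obstacle here: the corollary is a one-line specialization, and all the work has already been done in the Proposition (the hocolim/colim comparison via the projection lemma, and the identification of $\hocolim\Phi$ with a join). The only thing a careful reader might want spelled out is the identification $\Z_K^{\ast}(\underline{(\Delta_{[l_i]},K_i)}) = K(\Ks)$, but that is precisely Definition \ref{definComposedSimpComp}, so it can be cited rather than reproved.

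\begin{proof}
By Definition \ref{definComposedSimpComp} the composed complex is the polyhedral join $K(\Ks) = \Z_K^{\ast}(\underline{(\Delta_{[l_i]},K_i)})$. Each full simplex $\Delta_{[l_i]}$ is contractible (it is a cone), so the hypotheses of the preceding proposition are met with $(X_i,A_i) = (\Delta_{[l_i]},K_i)$. We conclude that
$$K(\Ks) = \Z_K^{\ast}(\underline{(\Delta_{[l_i]},K_i)}) \simeq K_1\ast K_2\ast\ldots\ast K_m\ast K \cong K\ast K_1\ast K_2\ast\ldots\ast K_m,$$
the last identification being commutativity of the join up to homeomorphism. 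The assumption that all the vertex sets are nonempty guarantees that the polyhedral join $\Z_K^{\ast}(\underline{(\Delta_{[l_i]},K_i)})$ is nonempty and that its canonical simplicial structure is the one described combinatorially for $K(\Ks)$.
\end{proof}
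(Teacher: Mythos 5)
Your proof is correct and takes the same route the paper intends: the corollary is a direct specialization of the preceding proposition to the pairs $(X_i,A_i)=(\Delta_{[l_i]},K_i)$, where contractibility of each $\Delta_{[l_i]}$ is the only hypothesis to check, and the paper gives no separate argument for the corollary. A tiny caveat in your side remark: under the paper's conventions every simplicial complex contains the empty face, so $o^{l_i}$ (whose realization is already $\varnothing=S^{-1}$) is the extreme allowed case and there is no separate ``void complex'' to distinguish from it.
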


\begin{cor}\label{corComposSpheresIsSphere}
If $K\simeq S^{n-1}$, $K_i\simeq S^{n_i-1}$ then $K(\Ks)\simeq
S^{n+n_1+\ldots+n_m-1}$.
\end{cor}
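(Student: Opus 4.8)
The plan is to deduce this immediately from Corollary~\ref{corComposIsJoin} together with a standard fact about joins of spheres. First I would invoke Corollary~\ref{corComposIsJoin} to obtain the homotopy equivalence $K(\Ks)\simeq K\ast K_1\ast\ldots\ast K_m$. The hypotheses give $K\simeq S^{n-1}$ and $K_i\simeq S^{n_i-1}$, so by homotopy invariance of the join (applied to CW complexes, which all these simplicial complexes are) we get $K\ast K_1\ast\ldots\ast K_m\simeq S^{n-1}\ast S^{n_1-1}\ast\ldots\ast S^{n_m-1}$.

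The only remaining point is the identity $S^{a}\ast S^{b}\simeq S^{a+b+1}$, iterated: the join of the spheres $S^{n-1},S^{n_1-1},\ldots,S^{n_m-1}$ is homotopy equivalent (indeed homeomorphic) to $S^{(n-1)+(n_1-1)+\ldots+(n_m-1)+m}=S^{n+n_1+\ldots+n_m-1}$. This is classical; one short way to see it is that $S^{a}\ast S^{b}=\partial(D^{a+1}\times D^{b+1})\cong S^{a+b+1}$, and then one induces on the number of factors. Combining the two displayed equivalences yields $K(\Ks)\simeq S^{n+n_1+\ldots+n_m-1}$, as claimed.

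There is essentially no obstacle here: the statement is a formal consequence of the preceding corollary and the behaviour of joins under homotopy equivalence. The only mild care needed is that homotopy invariance of the join requires the spaces to be reasonably nice (e.g.\ CW complexes or, more than enough, well-pointed spaces), which is automatic for geometric realizations of simplicial complexes; and that one should note the degenerate cases are consistent with the convention $S^{-1}=\varnothing$ (if some $n_i=0$, i.e.\ $K_i\simeq S^{-1}=\varnothing$, then $K_i$ has empty vertex set and the corollary does not apply, matching the hypothesis ``nonempty sets of vertices'' inherited from Corollary~\ref{corComposIsJoin}).
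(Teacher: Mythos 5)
Your proof is correct and follows exactly the route the paper intends: the statement is presented as an immediate consequence of Corollary~\ref{corComposIsJoin} together with the classical homeomorphism $S^a\ast S^b\cong S^{a+b+1}$, and you have supplied precisely that argument (the paper itself gives no further proof). One small remark on your final aside: $K_i\simeq S^{-1}=\varnothing$ need not force an empty vertex set, since a complex such as $o^l$ has $l$ ghost vertices yet empty geometric realization; in that case $n_i=0$ and the join with $\varnothing$ contributes nothing, so the formula and Corollary~\ref{corComposIsJoin} still apply consistently.
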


\begin{cor}\label{corDimOfPcomposedHomol}
Let $P,P_1,\ldots,P_m$ be the polytopes of dimensions
$n,n_1,\ldots,n_m$. Then $\dim P(\Ps) = n+n_1+\ldots+n_m$.
\end{cor}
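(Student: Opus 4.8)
The statement to prove is Corollary \ref{corDimOfPcomposedHomol}: that $\dim P(P_1,\ldots,P_m) = n + n_1 + \cdots + n_m$ when $P, P_1, \ldots, P_m$ have dimensions $n, n_1, \ldots, n_m$. The plan is to reduce the claim entirely to the homotopy-theoretic statement already in hand, namely Corollary \ref{corComposSpheresIsSphere}, via the nerve-complex formalism of Section \ref{sectPrelim}. First I would recall that by the Observation of Section \ref{sectCompPolytopes} every polytope is affine equivalent to a natural stochastic polytope, and affine equivalence does not change the dimension, so without loss of generality I may assume $P, P_1, \ldots, P_m$ are natural stochastic. Then Proposition \ref{propFunctorOperad} applies and gives $K_{P(P_1,\ldots,P_m)} = K_P(K_{P_1},\ldots,K_{P_m})$.

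Next I would invoke the Statement in Section \ref{sectPrelim}: for an $n$-dimensional polytope $P$, the nerve-complex $K_P$ is a spherical nerve-complex of rank $n$, and in particular $K_P = \link_{K_P}\varnothing$ is homotopy equivalent to the sphere $S^{n-1}$ (this is the last bullet of the definition of spherical nerve-complex, applied to the simplex $\varnothing \in F(K_P)$). Likewise $K_{P_i} \simeq S^{n_i-1}$. Now apply Corollary \ref{corComposSpheresIsSphere} to conclude that $K_{P(P_1,\ldots,P_m)} = K_P(K_{P_1},\ldots,K_{P_m}) \simeq S^{n+n_1+\cdots+n_m-1}$.

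Finally I must pass from the homotopy type of the nerve-complex back to the dimension of the polytope. Here the cleanest route is again the Statement of Section \ref{sectPrelim}: $P(P_1,\ldots,P_m)$ is itself a polytope, hence its nerve-complex is a spherical nerve-complex of rank equal to $\dim P(P_1,\ldots,P_m)$, and that rank is detected by the homotopy type of the complex, $\link\varnothing \simeq S^{\operatorname{rank}-1}$. Comparing with the computation of the previous paragraph yields $\dim P(P_1,\ldots,P_m) - 1 = n+n_1+\cdots+n_m - 1$, which is the desired equality. The main obstacle is making this last step rigorous: one needs that the rank of a spherical nerve-complex is determined by homotopy type, i.e. that a finite complex cannot be homotopy equivalent to both $S^{k}$ and $S^{k'}$ with $k \neq k'$ — which is immediate from homology — together with the fact that for a polytope the rank of $F(K_P)$ genuinely equals $\dim P$ (part of the cited Statement). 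With those in place the argument is a short chain of citations; no direct geometric analysis of the composed polytope is required.
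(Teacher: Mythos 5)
Your proposal is correct and follows essentially the same route as the paper's proof: identify $K_{P(\Ps)}$ with $K_P(K_{P_1},\ldots,K_{P_m})$ via Proposition \ref{propFunctorOperad}, use $K_Q\simeq S^{\dim Q-1}$ for any polytope $Q$, apply Corollary \ref{corComposSpheresIsSphere} (equivalently Corollary \ref{corComposIsJoin} plus the join-of-spheres fact), and compare sphere dimensions. The only differences are that you spell out the WLOG reduction to natural stochastic polytopes and the final uniqueness-of-sphere-dimension step, which the paper leaves implicit.
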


\begin{proof}
$K_Q\simeq S^{n-1}$ if $\dim Q = n$ for any convex polytope $Q$
(see section \ref{sectPrelim}). Therefore, $S^{\dim P(\Ps)-1}
\simeq K_{P(\Ps)} = K_P(K_{P_1},\ldots,K_{P_m})\simeq K_P\ast
K_{P_1}\ast\ldots\ast K_{P_m}\simeq S^{n+n_1+\ldots+n_m-1}$. Then
$\dim P(\Ps) = n+n_1+\ldots+n_m$.
\end{proof}

\ex Let $K(o^{l_1},\ldots,o^{l_m})$ be the composition of $K$ with
``ghost complexes'' $o^{l}$. Then $K(o^{l_1},\ldots,o^{l_m})\simeq
K$ by corollary \ref{corComposIsJoin}. It can be seen from example
\ref{examplOl} as well.

\begin{thm}\label{thmSphereReduct}
Let $K(\Ks)$ be a simplicial sphere (resp. homological sphere) and
suppose $K$ does not have ghost vertices. Then $K$ is a simplicial
sphere (resp. homological sphere) and $K_i =
\partial\Delta_{[l_i]}$ for some $l_i>0$. If $K(\Ks)$ is the
boundary of a simplicial polytope (up to ghost vertices), then so
is $K$.
\end{thm}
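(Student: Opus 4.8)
The plan is to combine the homotopy-theoretic information from Corollary~\ref{corComposSpheresIsSphere} with a link-by-link analysis. The starting point: if $K(\underline{K_i})$ is a (homological) sphere, say of dimension $N-1$, then by Corollary~\ref{corComposIsJoin} we know $K(\underline{K_i})\simeq K\ast K_1\ast\ldots\ast K_m$, and since the join of a collection of spaces is a sphere iff each factor is a homology sphere of the appropriate dimension, we immediately get that $K$ and each $K_i$ is a homology sphere. (More precisely, $\tilde H_*(X\ast Y)\cong \tilde H_{*-1}(X\otimes Y)$ forces each $K_i$ to have the homology of a sphere $S^{n_i-1}$ and $K$ of $S^{n-1}$ with $n+n_1+\cdots+n_m=N$.) This is the easy half and disposes of the homological part of the "$K$ is a sphere" claim.

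The real content is showing $K_i=\partial\Delta_{[l_i]}$, i.e.\ that each $K_i$ is not merely a homology sphere but is the \emph{boundary of a simplex} on its vertex set $[l_i]$ — equivalently, $K_i$ contains every proper subset of $[l_i]$ but not $[l_i]$ itself. First I would dispose of ghost vertices in the $K_i$: if $K_i$ had a ghost vertex $v\in[l_i]$, then from the combinatorial description of $K(\underline{K_i})$ one checks that $v$ is also a ghost vertex of $K(\underline{K_i})$ (choosing $I_i\ni v$ forces $I_i\notin K_i$), and a simplicial sphere (of dimension $\geq 0$) has no ghost vertices; a $(-1)$-sphere is the empty complex, handled separately. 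So each $K_i$ has full vertex set. Next, the key local statement: for a simplicial sphere, every link $\link_{K(\underline K_i)} I$ is again a simplicial sphere. Using the combinatorial description of $K(\underline{K_i})$ I would compute links at well-chosen simplices. Take a vertex $v$ in $[l_j]$ with $\{v\}\in K_j$; the link of $v$ in $K(\underline K_i)$ decomposes, via the combinatorial rule, in terms of $\link_{K_j}\{v\}$, the other $K_i$'s, and $K$ — it should again be a composition (or a join with a composition). Since this link must be a sphere of one lower dimension, induction on the total number of vertices $\sum l_i$ applies: it forces $\link_{K_j}\{v\}=\partial\Delta$ on its vertex set, and similarly for all vertices, and a complex all of whose vertex links are boundaries of simplices on the remaining vertices and which is itself a homology sphere must be $\partial\Delta_{[l_j]}$ (each vertex is contained in a maximal simplex consisting of all other vertices, and the only such complex that is a sphere rather than having a free boundary is the full boundary). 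One must also check $l_j>0$, i.e.\ rule out $K_j$ being a point-free empty complex, which would make the relevant factor trivial and is incompatible with $K$ having no ghost vertices combined with $K(\underline K_i)$ being a genuine sphere.

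With each $K_i=\partial\Delta_{[l_i]}$ established, the identity $K(\underline K_i)=K(l_1,\ldots,l_m)$ holds by definition (Example~\ref{examplSimplicialWedge}), and now I would run the argument backwards using the step-by-step structure of Corollary~\ref{corIterSphereIsSphere}: $K(l_1,\ldots,l_m)$ is obtained from $K$ by iterated vertex doublings $K\mapsto K(l,1,\ldots,1)$, and we established earlier that $K(l,1,\ldots,1)\cong_{PL}\Sigma^{l-1}K$. Since an iterated (unreduced) suspension $\Sigma^{l-1}K$ is PL-homeomorphic to a sphere iff $K$ is PL-homeomorphic to a sphere (suspension coordinates can be collapsed; PL-ly this is the statement that a suspension is a PL sphere iff its base is), peeling off the blow-ups one at a time recovers that $K$ itself is a simplicial sphere; the same desuspension argument works at the level of homology spheres. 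For the polytopal refinement, if $K(\underline K_i)=\partial Q$ for a simplicial polytope $Q$, then $K=K_{P}(l_1,\ldots,l_m)$ is the iteration of a polytope and \cite[Th.2.3]{BBCGit} (quoted in the excerpt, together with the dual description $K(l_1,\ldots,l_m)=\partial(Q^*(l_1,\ldots,l_m))^*$) shows that the iteration of a polytope being polytopal forces the base to be polytopal — run inductively over the $l_i$ via Corollary~\ref{corIterSphereIsSphere}.

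I expect the main obstacle to be the link computation in the middle step: verifying precisely that $\link_{K(\underline K_i)}\{v\}$ is itself of the form "composition of something smaller" so that the induction on $\sum l_i$ closes, and handling the bookkeeping of ghost vertices that may appear in these links even though the original complexes have none. The desuspension argument at the end is standard but the PL (as opposed to merely homotopy-theoretic) statement needs the explicit $K(l,1,\ldots,1)\cong_{PL}\Sigma^{l-1}K$ that the excerpt already records, so that part should go through cleanly.
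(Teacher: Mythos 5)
Your plan is in the right neighborhood --- the paper also argues via links in $K(\underline{K_i})$, using Lemma~\ref{lemmaLinkStructMain} --- but it contains a concrete error and leaves the central induction under-specified in a way that matters.

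The error is the ghost-vertex claim. If $v\in[l_i]$ is a ghost vertex of $K_i$, take $I_i=\{v\}$ and $I_j=\varnothing$ for $j\neq i$; then $\{j : I_j\notin K_j\}=\{i\}$, and since $K$ has no ghost vertices, $\{i\}\in K$, hence $\{v\}\in K(\underline{K_i})$. So ghost vertices of the $K_i$ are \emph{never} ghost vertices of the composition (this is exactly why $K_i=o^1$ behaves as the operad identity). Your preliminary ``dispose of ghost vertices'' step therefore does not go through, and the case $K_j=o^{l_j}$ with $l_j\geq 2$ --- all vertices ghost --- survives both it and the homotopy reduction (since $o^{l_j}\simeq S^{-1}$ the join computation of Corollary~\ref{corComposIsJoin} does not exclude it), yet it has no non-ghost vertex at which to begin your inductive link computation.

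The paper sidesteps all of this with a single, uniform move. Suppose some $K_j$ has a nonsimplex $A_j\notin K_j$ with $A_j\neq[l_j]$ (this includes the case of a ghost vertex and the case $K_j=o^{l_j}$, $l_j\geq 2$). Then $A=A_j$ padded by $\varnothing$'s lies in $K(\underline{K_i})$ because $\{j\}\in K$, and Lemma~\ref{lemmaLinkStructMain} exhibits $\link_{K(\underline{K_i})}A$ as a join with the nonempty simplex $\Delta_{M_j}$, $M_j=[l_j]\setminus A_j\neq\varnothing$, hence contractible --- impossible inside a sphere. So every $K_i$ is $\partial\Delta_{[l_i]}$ with no induction and no case analysis. (The possibility $K_i=\Delta_{[l_i]}$ is separately excluded by contractibility of the join, as you noted.) For the remaining claim, the paper does not desuspend: it takes $A=I_1^{\max}\sqcup\cdots\sqcup I_m^{\max}$ with $I_i^{\max}$ a facet of $\partial\Delta_{[l_i]}$ and reads off from the lemma that $\link_{K(\underline{K_i})}A=K(o^1,\ldots,o^1)=K$, so $K$ inherits the sphere, homological-sphere, or polytopal property directly from a link. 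Your PL-desuspension route via $K(l,1,\ldots,1)\cong_{PL}\Sigma^{l-1}K$ would also establish the simplicial-sphere case, but the link-at-a-facet computation is shorter and handles all three conclusions at once, so I would recommend reorganizing around Lemma~\ref{lemmaLinkStructMain} rather than trying to patch the ghost-vertex step.
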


\begin{proof}
We need a technical lemma which describes the links of simplices
in the composed complex $K(\Ks)$.

\begin{lemma}\label{lemmaLinkStructMain}
Suppose $K,K_1,\ldots,K_m$ are simplicial complexes on sets
$[m],[l_1],\ldots,[l_m]$. Let $A\in K(\Ks)$,
$A=A_1\sqcup\ldots\sqcup A_m$, $A_i\subseteq[l_i]$ and $J = \{i\in
[m]\mid A_i\notin K_i\}\in K$. Also let $\{i_1,\ldots,i_k\} =
[m]\setminus J$. For each $i\in J$ consider a set
$M_i=[l_i]\setminus A_i$ and a simplex $\Delta_{M_i}$ spanned by
this set. Then
$$
\link_{K(\Ks)}A = \link_KJ(\link_{K_{i_1}}A_{i_1},
\ldots,\link_{K_{i_k}}A_{i_k})\ast\left(\bigast_{i\in
J}\Delta_{M_i}\right).
$$
\end{lemma}

\begin{proof}
Both simplicial complexes have the same set of vertices
$\bigsqcup\limits_{i=1}^m([l_i]\setminus A_i)$. Let
$I=I_1\sqcup\ldots\sqcup I_m\in \link_{K(\Ks)}A$. By definition
this means $I\sqcup A\in K(\Ks)$. Equivalently,
$$B' = \{i\in[m]\mid I_i\sqcup A_i\notin K_i\}\in K.$$
Equivalently,
$$
B=\{i\in[m]\setminus J\mid I_i\sqcup A_i\notin K_i\}\sqcup J\in K,
$$
because $A_i\notin K_i$ yields $A_i\sqcup I_i\notin K_i$ and,
therefore, $J\subseteq B'$. Equivalently,
$$
B=\{i\in[m]\setminus J\mid I_i\sqcup A_i\notin K_i\}\in \link_KJ.
$$
Equivalently,
\begin{equation}\label{equatLinkFinCond}
B=\{i\in[m]\setminus J\mid I_i\notin \link_{K_i}A_i\}\in \link_KJ.
\end{equation}
So far
$$
I=\bigsqcup\limits_{i\notin J}I_i\sqcup\bigsqcup\limits_{i\in
J}I_i,
$$
where $\bigsqcup\limits_{i\notin J}I_i$ satisfies
\eqref{equatLinkFinCond}, therefore
$$
\bigsqcup\limits_{i\notin J}I_i\in
\link_{K}J(\link_{K_{i_1}}A_{i_1},\ldots,\link_{K_{i_k}}A_{i_k}).
$$
Since no conditions on $\bigsqcup\limits_{i\in J}I_i$ are imposed,
we get the required formula.
\end{proof}

Now let $K(\Ks)$ be a simplicial (resp. homological) sphere. Then
for any $A\in K(\Ks)$ the complex $\link_{K(\Ks)}A$ is a
simplicial (resp. homological) sphere as well. First of all, note
that $K_i\neq\Delta_{[l_i]}$. Indeed, otherwise $K(\Ks)\simeq
K\ast K_1\ast\ldots\ast K_m$ is contractible which contradicts the
assumption.

In what follows we use the notation of lemma
\ref{lemmaLinkStructMain}. Suppose there exists a number $j\in[m]$
for which $K_j$ has a nonsimplex $A_j\notin K_j$ such that
$A_j\neq[l_j]$. Consider the subset
$$A=\varnothing\sqcup\ldots\sqcup A_j\sqcup\ldots\sqcup\varnothing
\subseteq [l_1]\sqcup\ldots\sqcup[l_m].$$ Since $J=\{i\in[m]\mid
A_i\notin K_i\}=\{j\}\in K$ by the assumption, we have $A\in
K(\Ks)$. By lemma \ref{lemmaLinkStructMain} $\link_{K(\Ks)}A =
X\ast \Delta_{M_j}$, where $X$ is some complex and $\Delta_{M_j}$
is a simplex spanned by $[l_j]\setminus A_j\neq\varnothing$.
Therefore $\link_{K(\Ks)}A$ is contractible which contradicts the
assumption that it is a sphere.

Thus for each $i$ the only nonsimplices of $K_i$ are $[l_i]$. This
argument shows that $K_i = \partial\Delta_{[l_i]}$.

Let $I_i^{max}\in K_i=\partial\Delta_{[l_i]}$ be any maximal
simplex (facet) for each $i\in[m]$. Then $l_i-|I_i^{max}|=1$ and
$\link_{K_i}I_i^{max}=o^1$, the complex on one ghost vertex.

Consider the simplex $A = I_1^{max}\sqcup\ldots\sqcup I_m^{max}\in
K(\Ks)$. Since $J=\{i\in[m]\mid I_i\notin K_i\}=\varnothing$ we
have $\link_KJ = \link_K\varnothing = K$. Applying lemma
\ref{lemmaLinkStructMain} to the simplex $A$ we get
$$
\link_{K(\Ks)}A = \link_KJ(\link_{K_1}I_1^{max},\ldots,
\link_{K_m}I_m^{max}) = K(o,\ldots,o)=K,
$$
Since $\link_{K(\Ks)}A$ is a combinatorial (resp. homological)
sphere, so is $K$.

If $K(\Ks)$ is the boundary of a simplicial polytope then all its
links are the boundaries of simplicial polytopes. This gives the
last part of the proposition.
\end{proof}

\rem More general result can be obtained by the same arguments.
Let $K(\Ks)$ be a combinatorial (homological) sphere. Then

1) $K$ is a combinatorial sphere;

2) If $i$ is not a ghost vertex of $K$, then $K_i =
\partial\Delta_{[l_i]}$ for some $l_i>0$.

3) If $i$ is a ghost vertex of $K$, then $K_i$ is a combinatorial
(homological) sphere.

\begin{prop}\label{propCompOfPisSimpleThenPisSimple}
Let $K(\Ks)=K_Q$ for some simple polytope $Q$. Then there exists a
simple polytope $P$ and numbers $l_i>0$ such that $K=K_P$ and
$K_i=\partial\Delta_{[l_i]}$. The polytopes $Q$ and
$P(l_1,\ldots,l_m)$ are combinatorially equivalent.
\end{prop}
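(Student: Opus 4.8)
The plan is to combine Theorem~\ref{thmSphereReduct} with the functoriality of nerve-complexes established in Proposition~\ref{propFunctorOperad}. The hypothesis is that $K(\Ks) = K_Q$ for a simple polytope $Q$. Since $Q$ is simple, $K_Q = \partial Q^*$ is a simplicial sphere (Example~\ref{examplPsimpleKPsphere}); in fact it is the boundary of the simplicial polytope $Q^*$. First I would observe that $K$ cannot have ghost vertices if we read it correctly as the ``outer'' complex: a ghost vertex $i$ of $K$ would force $K_i$ to be an entire factor, and the composed complex would depend on $K_i$ in a way incompatible with being a sphere unless $K_i$ is itself a sphere, but even then the vertex $i$ of $K$ never becomes a genuine vertex of $K(\Ks)$; to keep the statement clean one assumes (as in Theorem~\ref{thmSphereReduct}) that $K$ has no ghost vertices, or invokes the Remark after that theorem to handle ghost vertices separately. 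Granting this, Theorem~\ref{thmSphereReduct} applies directly and yields that $K$ is the boundary of a simplicial polytope and that $K_i = \partial\Delta_{[l_i]}$ for some $l_i > 0$.

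Next I would pass from $K$ being the boundary of a simplicial polytope to the existence of a \emph{simple} polytope $P$ with $K = K_P$. If $K = \partial R$ for a simplicial polytope $R$, set $P = R^*$, the dual simple polytope; then by Example~\ref{examplPsimpleKPsphere}, $K_P = \partial P^* = \partial R = K$. This gives the simple polytope $P$ and the numbers $l_i$ asked for in the statement.

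It remains to identify $Q$ combinatorially with $P(l_1,\ldots,l_m)$. Here I would invoke Proposition~\ref{propFunctorOperad} together with the computation recorded in Example~\ref{examplSimplicialWedge}: for a polytope $P$,
$$K_{P(l_1,\ldots,l_m)} = K_P(\partial\Delta_{[l_1]},\ldots,\partial\Delta_{[l_m]}) = K_P(\Ks) = K(\Ks) = K_Q.$$
Since both $P(l_1,\ldots,l_m)$ and $Q$ are (simple, hence combinatorial) polytopes and the nerve-complex is a complete combinatorial invariant of a polytope (Statement in Section~\ref{sectPrelim}, together with the Corollary that $K_P$ determines the face poset of $P$), the equality $K_{P(l_1,\ldots,l_m)} = K_Q$ forces $P(l_1,\ldots,l_m)$ and $Q$ to be combinatorially equivalent. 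One should also note $P(l_1,\ldots,l_m)$ is indeed simple because $P$ is (Example~\ref{examplIterJsPoly}), so comparing nerve-complexes of polytopes is legitimate.

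The main obstacle is bookkeeping around ghost vertices and the precise meaning of ``$K = K_P$'': one has to make sure that the $l_i$ produced by Theorem~\ref{thmSphereReduct} are the same $l_i$ appearing in the iteration $P(l_1,\ldots,l_m)$, and that no vertex-count mismatch arises (each $K_i = \partial\Delta_{[l_i]}$ contributes exactly $l_i$ vertices, matching the number of facets of $P$ that get ``blown up''). Once the dictionary $K \leftrightarrow P$, $K_i = \partial\Delta_{[l_i]} = K_{\triangle_{[l_i]}}$ is set up correctly, everything reduces to the already-proven Theorem~\ref{thmSphereReduct} and Proposition~\ref{propFunctorOperad}, and the proof is short.
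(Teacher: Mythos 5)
Your proof is correct and follows essentially the same route as the paper: apply Theorem~\ref{thmSphereReduct} to $K(\Ks) = K_Q = \partial Q^*$, read off that $K_i = \partial\Delta_{[l_i]}$ and that $K$ is the boundary of a simplicial polytope, take $P$ to be the dual simple polytope so $K = K_P$, and finish with Proposition~\ref{propFunctorOperad} together with the fact that the nerve-complex is a complete combinatorial invariant of a polytope. Your explicit flagging of the no-ghost-vertex hypothesis of Theorem~\ref{thmSphereReduct} (which the paper's proof passes over silently) is a careful touch, but the substance and structure of the argument are the same.
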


\begin{proof}
If $Q$ is simple $K_Q=\partial Q^*$. Therefore, if $K(\Ks)=K_Q$,
then $K(\Ks)$ is the boundary of a simplicial polytope and by
theorem \ref{thmSphereReduct} $K_i=\partial\Delta_{[l_1]}$ and $K$
is the boundary of a simplicial polytope. Then $K = K_P$ and $K_Q
= K(\Ks)=K_P(\partial\Delta_{[l_1]},\ldots,\partial\Delta_{[l_m]})
= K_{P(l_1,\ldots,l_m)}$ by proposition \ref{propFunctorOperad}.
This means that $Q$ and $P(l_1,\ldots,l_m)$ are combinatorially
equivalent (see section \ref{sectPrelim}).
\end{proof}

Proposition \ref{propFunctorOperad} motivated the assumption, that
the class of spherical nerve-complexes is closed under composition
unlike the class of simplicial spheres.

\begin{thm}\label{thmNerveCmpxCompos}
Let $K$ be the spherical nerve-complex of rank $n$ with $m$
vertices and $K_1,\ldots,K_m$ be spherical nerve-complexes on
$[l_1],\ldots,[l_m]$ of ranks $n_1,\ldots,n_m$ respectively. Then
$K(K_1,\ldots,K_m)$ is a spherical nerve-complex of rank
$n+n_1+\ldots+n_m$.
\end{thm}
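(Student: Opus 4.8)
\quad The plan is to verify, for $K(\Ks)$, the three defining conditions of a spherical nerve-complex, with $N:=n+n_1+\ldots+n_m$ as the candidate rank. The first step is to describe the maximal simplices and the poset $F(K(\Ks))$. Since each $K_i$ is a spherical nerve-complex we have $K_i\neq\Delta_{[l_i]}$ (otherwise the intersection of the unique maximal simplex $[l_i]$ of $K_i$ would be nonempty, contradicting $\varnothing\in F(K_i)$), so $[l_i]\notin K_i$. A short maximality argument then shows that the facets of $K(\Ks)$ are exactly the sets $\bigsqcup_i A_i$ with $J:=\{i\mid A_i=[l_i]\}\in M(K)$ and $A_i\in M(K_i)$ for $i\notin J$. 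Intersecting such facets coordinatewise, and using the elementary fact that $I\in F(L)$ if and only if $I=\bigcap\{J\in M(L)\mid J\supseteq I\}$, one obtains the key description
$$
F(K(\Ks))=\Bigl\{\textstyle\bigsqcup_i B_i\ \Big|\ J_0:=\{i\mid B_i=[l_i]\}\in F(K),\ B_i\in F(K_i)\text{ for }i\notin J_0\Bigr\},
$$
where for $i\notin J_0$ one has $B_i\in K_i$, so that $J_0=\{i\mid B_i\notin K_i\}$. In particular the first defining condition, $\varnothing\in F(K(\Ks))$, is immediate: $J_0=\varnothing\in F(K)$ and $\varnothing\in F(K_i)$ for all $i$.

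Next I would prove that $F(K(\Ks))$ is graded of rank $N$ by exhibiting its rank function
$$
\rho(B)=\rank_{F(K)}(J_0)+\sum_{i\notin J_0}\rank_{F(K_i)}(B_i)+\sum_{i\in J_0}n_i .
$$
From the description of $F(K(\Ks))$ one reads off that $B\subseteq B'$ iff $J_0\subseteq J_0'$ and $B_i\subseteq B_i'$ for $i\notin J_0'$. A routine ``factoring'' argument (if two coordinates can be moved independently, the step is not a cover) shows that every covering relation $B\lessdot B'$ is of one of two types: (a) $J_0'=J_0$ and exactly one $B_i$ is replaced by a cover of it in $F(K_i)$; or (b) $J_0\lessdot J_0'$ in $F(K)$, each coordinate $B_i$ with $i\in J_0'\setminus J_0$ is a facet of $K_i$ (hence of top rank $n_i$ in $F(K_i)$) and is replaced by $[l_i]$, while the remaining coordinates are unchanged. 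In case (a), $\rho$ increases by $1$ because $F(K_i)$ is graded; in case (b), the jump $\rank_{F(K)}(J_0')-\rank_{F(K)}(J_0)=1$ survives since the lost terms $\rank_{F(K_i)}(B_i)=n_i$ are compensated exactly by the gained terms $n_i$. As $\rho(\varnothing)=0$ and the maximal elements of $F(K(\Ks))$ are precisely the facets of $K(\Ks)$, all of which have $\rho=N$, the poset is graded of rank $N$ with rank function $\rho$.

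Finally, for the third condition I would compute the homotopy type of $\link_{K(\Ks)}B$ for $B\in F(K(\Ks))$. Writing $B=\bigsqcup_i B_i$ as above, one has $\{i\mid B_i\notin K_i\}=J_0$ and $[l_i]\setminus B_i=\varnothing$ for $i\in J_0$, so Lemma~\ref{lemmaLinkStructMain} gives
$$
\link_{K(\Ks)}B=\link_K J_0\bigl(\{\link_{K_i}B_i\}_{i\notin J_0}\bigr).
$$
By Corollary~\ref{corComposIsJoin} this is homotopy equivalent to $\link_K J_0\ast\bigast_{i\notin J_0}\link_{K_i}B_i$. Since $K$ and all $K_i$ are spherical nerve-complexes and $J_0\in F(K)$, $B_i\in F(K_i)$, these factors are homotopy spheres $S^{\,n-\rank_{F(K)}(J_0)-1}$ and $S^{\,n_i-\rank_{F(K_i)}(B_i)-1}$ respectively; the identity $S^a\ast S^b\simeq S^{a+b+1}$ (valid for $a,b\geqslant-1$, with $S^{-1}=\varnothing$) together with a short arithmetic check then yield $\link_{K(\Ks)}B\simeq S^{\,N-\rho(B)-1}$, which is exactly the third defining condition for the rank function found above.

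The combinatorial part is where the real work lies: establishing the description of $F(K(\Ks))$ (the coordinatewise intersection of facets, treating the coordinates $i\in J_0$ — where $B_i$ is forced to equal $[l_i]$ — separately from the rest, and checking that $J_0$ itself, not merely $J_0\in K$, lies in $F(K)$), and classifying the covering relations in order to pin down $\rho$. One must also keep track of the degenerate cases excluded along the way, in particular $K_i\neq\Delta_{[l_i]}$ and $K\neq\Delta_{[m]}$ (the latter ensuring $J_0\subsetneq[m]$, so that $\link_K J_0$ and Corollary~\ref{corComposIsJoin} apply). By contrast, once Lemma~\ref{lemmaLinkStructMain} and Corollary~\ref{corComposIsJoin} are available, the homotopy-sphere step is routine.
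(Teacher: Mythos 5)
Your proposal follows essentially the same route as the paper: describe $M(K(\Ks))$ and $F(K(\Ks))$ coordinatewise in terms of $M(K),F(K)$ and $M(K_i),F(K_i)$, exhibit the same rank function, then apply Lemma~\ref{lemmaLinkStructMain} together with Corollary~\ref{corComposIsJoin} to identify $\link_{K(\Ks)}B$ with a join of homotopy spheres of the right dimensions. The one place you go beyond the paper is in actually classifying the covering relations of $F(K(\Ks))$ to justify that $\rho$ is a rank function — the paper simply asserts the grading — so your write-up is a correct and slightly more careful version of the same argument.
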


\begin{proof}
We use the notation of section \ref{sectPrelim}. Let us describe
the set of maximal simplices $M(K(\Ks))$ and the set of their
intersections $F(K(\Ks))$. We have $I_1\sqcup\ldots\sqcup I_m \in
M(K(\Ks))$ iff there exist a simplex $I\in M(K)$, such that $I_j =
[l_j]$ for $j\in I$ and $I_j\in M(K_j)$ for $j\notin I$. Then
$I_1\sqcup\ldots\sqcup I_m \in F(K(\Ks))$ iff there exists $I\in
F(K)$, such that $I_j = [l_j]$ for $j\in I$ and $I_j\in F(K_j)$
for $j\notin I$. In this case we will say that $I$ is the support
of $I_1\sqcup\ldots\sqcup I_m$. Obviously $\varnothing\in
F(K(\Ks))$.

The poset $F(K(\Ks))$ is graded by the rank function
$$\rank_{F(K(\Ks))} (I_1\sqcup\ldots\sqcup I_m) =
\rank_{F(K_1)}'I_1+\ldots+\rank_{F(K_m)}'I_m+\rank_{F(K)}I,$$
where $\rank_{F(K_j)}'I_j = \rank_{F(K_j)}I_j$ if $I_j\in F(K_j)$
(that is $j\notin I$) and $\rank_{F(K_j)}'I_j = n_j$, the rank of
the nerve-complex $K_j$, if $I_j = V_j$ (in the case $j\in I$).

For a link of $I_1\sqcup\ldots\sqcup I_m$ with the support $I$ we
have
$$\link_{K(\Ks)}(I_1\sqcup\ldots\sqcup I_m) =
\link_KI\left(\underline{\left\{\link_{K_j}I_j\right\}}_{j\notin
I}\right).$$

By corollary \ref{corComposIsJoin}
\begin{multline}
\link_KI\left(\underline{\left\{\link_{K_j}I_j\right\}}_{j\notin
I}\right)\simeq \link_KI\ast \left(\bigast_{j\notin
I}\link_{K_j}I_j\right) \simeq\\ \simeq
S^{n-\rank_{F(K)}I-1}\ast\left(\bigast_{j\notin I}
S^{n_j-\rank_{F(K_j)}I_j-1}\right) \cong S^{n+\sum\limits_{j\notin
I}n_j - \rank_{F(K)}I - \sum\limits_{j\notin
I}\rank_{F(K_j)}I_j-1}.
\end{multline}
Since $\rank_{F(K_j)}'I_j = n_j$ in the case $j\in I$, by adding
and subtracting \linebreak$\sum_{j\in I}\rank_{F(K_j)}'I_j$ to the
dimension of a sphere in the last expression we get
\begin{multline}
n+\sum\limits_{j\notin I}n_j - \rank_{F(K)}I -
\sum\limits_{j\notin I}\rank_{F(K_j)}I_j-1 =\\=
n+\sum\limits_{j\in[m]}n_j-\sum\limits_{j\in[m]}\rank_{F(K_j)}'I_j-1
= n+\sum\limits_{j\in[m]}n_j -
\rank_{F(K(\Ks))}(I_1\sqcup\ldots\sqcup I_m)-1,
\end{multline}
and the statement follows.
\end{proof}

\section{Multigraded Betti numbers of the
compositions}\label{sectMultBettiNums}

In this section we review the definition of multigraded Betti
numbers of a simplicial complex $K$, and the Hochster formula
expressing multigraded Betti numbers as the ranks of cohomology
groups of full subcomplexes in $K$. Together with corollary
\ref{corComposIsJoin} Hochster formula will give an explicit
formula expressing multigraded Betti numbers of $K(\Ks)$ in terms
of multigraded Betti numbers of $K, K_1,\ldots,K_m$. In particular
cases this formula has very simple form and allows to find the
$h$-vectors of composed complexes.

Let $\ko$ be the ground field and $\ko[m] = \ko[v_1,\ldots,v_m]$
--- the ring of polynomials in $m$ indeterminates. The ring $k[m]$
has a $\Zo^m$-grading defined by $\deg v_i =
(0,\ldots,2,\ldots,0)$ with $2$ on the $i$-th place. The field
$\ko$ is given the $\ko[m]$-module structure via the epimorphism
$\ko[m]\to \ko$, $v_i\mapsto 0$.

Let $K$ be a simplicial complex on $m$ vertices. The
Stanley--Reisner algebra $\ko[K]$ is defined as a quotient algebra
$\ko[m]/I_{SR}$, where the Stanley--Reisner ideal $I_{SR}$ is
generated by square-free monomials $v_{\alpha_1}\ldots
v_{\alpha_k}$ corresponding to nonsimplices
$\{\alpha_1,\ldots,\alpha_m\}\notin K$. The algebra $\ko[K]$ has a
natural $\ko[m]$-module structure, given by quotient epimorphism
$\ko[m]\to \ko[m]/I_{SR}$. Since $I_{SR}$ is homogeneous ideal,
the module $\ko[m]$ is $\Zo^m$-graded.

Let $\ldots \to R^{-i}\to R^{-i+1}\to\ldots\to R^{-1}\to R^{0}\to
\ko[K]$ be a free resolution of the module $\ko[K]$ by
$\Zo^m$-graded $\ko[m]$-modules $R^{-i}$. We have $R^{-i} =
\bigoplus_{\bar{j}\in \Zo^m} R^{-i,\bar{j}}$. The Tor-module of a
complex $K$ therefore has a natural $\Zo^{m+1}$-grading:
$$
\Tor_{\ko[m]}(\ko[K],\ko) = \bigoplus\limits_{i\in
\Zo_{\geqslant}; \bar{j}\in\Zo^m}
\Tor^{-i,2\bar{j}}_{\ko[m]}(\ko[K],\ko).
$$
The multigraded Betti numbers of a complex $K$ are defined as the
dimensions of the graded components of the Tor-module:
$$
\beta_{\ko}^{-i,2\bar{j}}(K) = \dim_{\ko}
\Tor^{-i,2\bar{j}}_{\ko[m]}(\ko[K],\ko).
$$
These numbers depend on the ground field but we will omit $\ko$ to
avoid cumbersome notation. A combinatorial description of
multigraded Betti numbers is given by Hochster formula
\cite{Hoch,BP}.

\begin{thm}[{Hochster, \cite[Th.3.2.8]{BPnew}}]
For a simplicial complex $K$ on $m$ vertices and $\bar{j} =
(j_1,\ldots,j_m)\in \Zo^m$ there holds $\beta^{-i,2\bar{j}} = 0$
if $\bar{j}\notin \{0,1\}^m$. If $\bar{j}\in \{0,1\}^m$ and
$A=\{i\in[m]\mid j_i=1\}$, then
\begin{equation}\label{equatHochster}
\beta^{-i,2\bar{j}} = \Hr^{|A|-i-1}(K_{A};\ko),
\end{equation}
where $K_{A}$ is a full subcomplex of $K$ on the set of vertices
$A$. In this formula it is assumed that $\Hr^{-1}(\varnothing;\ko)
= \ko$.
\end{thm}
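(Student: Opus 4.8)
The plan is to prove Hochster's formula by computing $\Tor_{\ko[m]}(\ko[K],\ko)$ with the Koszul resolution of $\ko$ and then splitting the resulting complex along its $\Zo^m$-grading. Recall that the Koszul complex $\Lambda[u_1,\ldots,u_m]\otimes\ko[m]$ with differential $du_i=v_i$, $dv_i=0$ is a free $\Zo^m$-graded resolution of $\ko$, where $\deg u_i=\deg v_i=2\bar e_i$ and the exterior generator $u_i$ sits in homological degree $-1$. Hence $\Tor_{\ko[m]}^{-i,*}(\ko[K],\ko)$ is the $(-i)$-th cohomology of the $\Zo^m$-graded complex $(C,d)=(\ko[K]\otimes\Lambda[u_1,\ldots,u_m],d)$, where $d(v^{\bar a}u_\sigma)=\sum_{k\in\sigma}\pm\,v^{\bar a+\bar e_k}u_{\sigma\setminus k}$. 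A monomial $v^{\bar a}u_\sigma$, with $\bar a\in\Zo_{\geqslant}^m$, $\operatorname{supp}\bar a\in K$ and $\sigma\subseteq[m]$, lies in multidegree $2\bar j$ exactly when $a_k+[k\in\sigma]=j_k$ for all $k$, and $d$ preserves this multidegree; each graded piece is finite-dimensional, so $C=\bigoplus_{\bar j}C_{2\bar j}$ and it suffices to compute $H^*(C_{2\bar j})$ for each $\bar j\in\Zo^m$.

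First I would dispose of the multidegrees with $\bar j\notin\{0,1\}^m$. Fix such a $\bar j$ and choose $k$ with $j_k\geqslant 2$; then in every monomial $v^{\bar a}u_\sigma$ of $C_{2\bar j}$ we have $a_k=j_k-[k\in\sigma]\geqslant 1$, so $v_k$ divides $v^{\bar a}$. Define $h\colon C_{2\bar j}\to C_{2\bar j}$ by $h(v^{\bar a}u_\sigma)=\pm\,v^{\bar a-\bar e_k}u_{\sigma\cup k}$ when $k\notin\sigma$ and $h(v^{\bar a}u_\sigma)=0$ when $k\in\sigma$; this is well defined since $\operatorname{supp}(\bar a-\bar e_k)\subseteq\operatorname{supp}\bar a\in K$. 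A direct sign computation gives $dh+hd=\pm\operatorname{id}$, so $C_{2\bar j}$ is acyclic and $\beta^{-i,2\bar j}=0$. Arranging the signs so that $h$ is genuinely a null-homotopy is the one place that requires care, and it is the main technical point of the argument; everything else is essentially bookkeeping.

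It remains to treat $\bar j\in\{0,1\}^m$; put $A=\{k\mid j_k=1\}$. For such $\bar j$ the relation $a_k+[k\in\sigma]=j_k$ forces $\bar a$ to be square-free with $\operatorname{supp}\bar a=A\setminus\sigma$ and $\sigma\subseteq A$, so $C_{2\bar j}$ has $\ko$-basis $\{\,v_{A\setminus\sigma}u_\sigma\mid\sigma\subseteq A,\ A\setminus\sigma\in K\,\}$. Reindexing by the face $\omega=A\setminus\sigma\in K_A$, so that $\sigma=A\setminus\omega$ and the homological degree is $-|\sigma|=|\omega|-|A|$, the basis becomes $\{\,\omega\in K_A\,\}$ placed in homological degree $|\omega|-|A|$, and $d$ sends $\omega$ to $\sum\pm(\omega\cup\{k\})$ over $k\in A\setminus\omega$ with $\omega\cup\{k\}\in K$ — precisely the reduced simplicial cochain differential of $K_A$, up to the overall degree shift by $|A|$. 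Hence $H^{-i}(C_{2\bar j})\cong\Hr^{\,|A|-i-1}(K_A;\ko)$, which is the claimed formula; when $A=\varnothing$ one has $C_{2\bar 0}=\ko$ concentrated in homological degree $0$, in agreement with the convention $\Hr^{-1}(\varnothing;\ko)=\ko$. Since $\Tor$ is independent of the resolution used, these computations determine $\beta^{-i,2\bar j}(K)$ unambiguously, completing the proof.
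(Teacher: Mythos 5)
The paper does not actually prove Hochster's theorem; it cites the result to Buchstaber--Panov \cite[Th.3.2.8]{BPnew} and uses it as a black box. So there is no internal proof to compare against, and your argument has to be judged on its own. It is correct and is in fact the standard Koszul-complex proof (essentially Baskakov's formulation): tensor the Koszul resolution of $\ko$ over $\ko[m]$ with $\ko[K]$, observe that the resulting complex $\ko[K]\otimes\Lambda[u_1,\ldots,u_m]$ splits over the $\Zo^m$-grading, and compute each multigraded piece $C_{2\bar j}$ separately. Your treatment of the square-free case is exactly right: the reindexing $\omega = A\setminus\sigma$ turns $C_{2\bar j}$ into the reduced simplicial cochain complex of $K_A$ with homological degree $|\omega|-|A|$, giving the shift $H^{-i}(C_{2\bar j})\cong\Hr^{|A|-i-1}(K_A;\ko)$, and the $A=\varnothing$ case matches the convention $\Hr^{-1}(\varnothing;\ko)=\ko$.

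The one point you flagged as requiring care in the vanishing argument is worth spelling out, since it is where a naive version could go wrong. The contracting homotopy $h(v^{\bar a}u_\sigma)=\pm v^{\bar a-\bar e_k}u_{\sigma\cup k}$ produces cross terms in $dh$ of the form $v^{\bar a-\bar e_k+\bar e_l}u_{(\sigma\cup k)\setminus l}$ (for $l\neq k$), which must cancel against the terms $v^{\bar a+\bar e_l-\bar e_k}u_{(\sigma\setminus l)\cup k}$ of $hd$; in $\ko[K]$ some of these monomials may vanish, and cancellation requires that $v^{\bar a-\bar e_k+\bar e_l}$ and $v^{\bar a+\bar e_l}$ have the same support (i.e.\ vanish simultaneously). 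This is guaranteed precisely because $h$ is only applied when $k\notin\sigma$, whence $a_k=j_k\geqslant 2$, so subtracting $\bar e_k$ does not change the support of the exponent vector. Combined with the observation that in the case $k\in\sigma$ only the $l=k$ term of $d$ survives application of $h$ (and $k\in\operatorname{supp}\bar a$ so that term is genuinely nonzero), the identity $dh+hd=\operatorname{id}$ holds on $C_{2\bar j}$, giving acyclicity. With that detail in place, the argument is complete.
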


By this result the set of all multigraded Betti numbers is a
complete combinatorial invariant of a simplicial complex.

If $A\subseteq[m]$ we use the notation $\beta^{-i,2A}$ for the
number $\beta^{-i,2\bar{j}}$, where $\bar{j} = (j_1,\ldots,j_m)$,
$j_i=1$ if $i\in A$ and $j_i=0$ otherwise.

\rem The full subcomplex will be sometimes denoted by $K|_{A}$
instead of $K_{A}$, especially in the case when for $K$ stands a
complex with its own lower index.
\\
\\
Bigraded Betti numbers are defined by the formula
$\beta^{-i,2j}(K) = \sum_{|A|=j}\beta^{-i,2A}$. These numbers are
the dimensions of graded components of the Tor-algebra
$\Tor^{\ast,\ast}_{\ko[m]}(\ko[K],\ko)$ if we specialize the
$\Zo^m$-grading $(j_1,\ldots,j_m)$ to the $\Zo$-grading $\sum
j_i$.

To work with multigraded Betti numbers we construct their
generating functions, called beta-polynomials of $K$. Let
$$
\beta_K(s,\bar{t}) =
\beta_K(s,t_1,t_2,\ldots,t_m)=\sum\limits_{i\in \Zo,\bar{j}\in
\Zo^m} \beta^{-i,2\bar{j}}(K)s^{i}\bar{t}^{\bar{j}},
$$
where $\bar{t}^{\bar{j}}$ stands for the monomial
$t_1^{j_1}t_2^{j_2} \ldots t_m^{j_m}$.

By Hochster formula $$\beta_K(s,\bar{t}) = \sum\limits_{i\in
\Zo,A\subseteq [m]} \beta^{-i,2A}(K)s^{i}\bar{t}^{A},$$ where
$\bar{t}^{A} = \prod_{l\in A}t_l$. The free term, which
corresponds to $A=\varnothing, i=0$, equals $1$ for any complex
$K$. In what follows we need the reduced beta-polynomial
$$
\betar_K(s,t) = \beta_K(s,t)-1 = \sum\limits_{\substack{i\in
\Zo\\A\subseteq [m],A\neq\varnothing}}
\beta^{-i,2A}s^{i}\bar{t}^{A}.
$$
Two-parametric beta-polynomial (see \cite[sect.8]{AB}) is defined
as
$$
\bet_K(s,t) = \sum\limits_{i,j\in\Zo} \beta^{-i,2j}s^{-i}t^{2j} =
\beta_K(s^{-1},t^2,t^2,\ldots,t^2).
$$
and
$$
\bett_K(s,t) = \sum\limits_{i,j\in\Zo, j\neq 0}
\beta^{-i,2j}s^{-i}t^{2j} =
\tilde{\beta}_K(s^{-1},t^2,t^2,\ldots,t^2) = \bet_K(s,t)-1.
$$

\ex \label{examplBetaOfPartDelta} Let $K = \partial \Delta_{[m]}$.
Then by Hochster formula we have
$$\beta_{\partial\Delta_{[m]}}(s,\ts) = 1+st_1t_2\ldots t_m$$ since
the nonacyclic full subcomplexes of $K$ are only $K_{\varnothing}$
and $K_{[m]}=K$. These subcomplexes have nontrivial reduced
cohomology in dimensions $-1$ and $m-2$ respectively.

\ex \label{examplBetaOfO} Let $K=o^m$. Then
\begin{equation*}
\beta_{o^m}(s,\ts) = \sum\limits_{A\subseteq[m]} s^{|A|}\ts^{A},
\end{equation*}
since for any $A\subseteq[m]$ the full subcomplex $(o^m)_{A}$ is
empty and its $(-1)$-cohomology has rank $1$. Therefore,
\begin{equation}\label{equatBetaOfO}
\beta_{o^m}(s,\ts) = (1+st_1)\cdot\ldots\cdot(1+st_m).
\end{equation}
\\
\\
For the polytope $P$ we define the polynomials
$\beta,\betar,\bet,\bett$ as the polynomials of the corresponding
nerve-complex $K_P$:
\begin{align}
\beta_P(s,\ts) = \beta_{K_P}(s,\ts),\qquad& \betar_P(s,\ts) =
\betar_{K_P}(s,\ts),\\
\bet_P(s,t) = \bet_{K_P}(s,t),\qquad& \bett_P(s,t) =
\bett_{K_P}(s,t).
\end{align}

Now we generalize some results of \cite{AB} concerning
beta-polynomials. Our goal is to express
$\beta_{K(\Ks)}(s,\bar{t})$ in terms of $\beta_K(s,\bar{t})$ and
$\beta_{K_i}(s,\bar{t})$. To do this at first we investigate the
structure of full subcomplexes in $K(\Ks)$.

\begin{lemma}\label{lemmaIterFullSub}
Consider $K$ on $m$ vertices and $K_\alpha$ on $l_\alpha$ vertices
for $\alpha\in[m]$, so the set of vertices of $K(\Ks)$ is
$[l_1]\sqcup\ldots\sqcup[l_m]$. Let $A$ be the subset of
$[l_1]\sqcup\ldots\sqcup[l_m]$, $A = A_1\sqcup\ldots\sqcup A_m$
where $A_\alpha\subseteq[l_\alpha]$. Let
$\nu=\{\alpha_1,\ldots,\alpha_k\} = \{\alpha\in[m]\mid
A_\alpha\neq\varnothing\}$. Then
$$
K(K_1,\ldots,K_m)_{A} =
K|_{\nu}(K_{\alpha_1}|_{A_{\alpha_1}},K_{\alpha_2}|_{A_{\alpha_2}},\ldots,K_{\alpha_k}|_{A_{\alpha_k}}).
$$
\end{lemma}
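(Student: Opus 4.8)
The plan is to verify the claimed identity of simplicial complexes directly from the combinatorial description of the composition given right after Definition~\ref{definComposedSimpComp}: a set $I=I_1\sqcup\ldots\sqcup I_m$ with $I_\alpha\subseteq[l_\alpha]$ is a simplex of $K(\Ks)$ if and only if $\{\alpha\in[m]\mid I_\alpha\notin K_\alpha\}\in K$. Both sides of the asserted equality are simplicial complexes on the vertex set $A=A_1\sqcup\ldots\sqcup A_m$, so it suffices to show that a subset $B=B_1\sqcup\ldots\sqcup B_m$ with $B_\alpha\subseteq A_\alpha$ belongs to one side exactly when it belongs to the other.

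For the left-hand side, since a full subcomplex does not change which of its vertex subsets are simplices, $B\in K(\Ks)_{A}$ iff $B\in K(\Ks)$, i.e. iff the ``defect set'' $D(B):=\{\alpha\in[m]\mid B_\alpha\notin K_\alpha\}$ lies in $K$. The first bookkeeping observation is that $D(B)\subseteq\nu$: for $\alpha\notin\nu$ we have $A_\alpha=\varnothing$, hence $B_\alpha=\varnothing$, and $\varnothing$ is a simplex of every simplicial complex occurring here, so such $\alpha$ never contributes to $D(B)$. Thus $D(B)=\{\alpha\in\nu\mid B_\alpha\notin K_\alpha\}$.

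For the right-hand side, I unwind the definition of $K|_\nu(K_{\alpha_1}|_{A_{\alpha_1}},\ldots,K_{\alpha_k}|_{A_{\alpha_k}})$, whose outer complex is indexed by $\nu=\{\alpha_1,\ldots,\alpha_k\}$ (harmlessly identified with $\{1,\ldots,k\}$) and whose vertex set is $A_{\alpha_1}\sqcup\ldots\sqcup A_{\alpha_k}=A$. Here $B$ is a simplex iff $\{\alpha\in\nu\mid B_\alpha\notin K_\alpha|_{A_\alpha}\}\in K|_\nu$; and since $B_\alpha\subseteq A_\alpha$, the condition $B_\alpha\in K_\alpha|_{A_\alpha}$ is equivalent to $B_\alpha\in K_\alpha$ by definition of a full subcomplex. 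Hence this defect set equals $\{\alpha\in\nu\mid B_\alpha\notin K_\alpha\}=D(B)$. Finally, because $D(B)\subseteq\nu$, membership $D(B)\in K|_\nu$ is the same as $D(B)\in K$. Comparing the two sides, the simplex conditions coincide, proving the lemma.

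The whole argument is essentially a careful accounting of empty coordinates together with a trivial re-indexing; the only thing to be attentive about is the identification of the index set $\nu\subseteq[m]$ of the outer composition with $\{1,\ldots,k\}$ and the fact that the coordinates outside $\nu$ influence neither side. I do not anticipate any real obstacle here.
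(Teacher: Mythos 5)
Your proof is correct and does exactly what the paper intends when it says ``The proof follows from definitions'': unwind the combinatorial description of the composition, observe that coordinates outside $\nu$ never contribute to the defect set, and note that membership of the defect set in $K$ is equivalent to membership in $K|_\nu$. Nothing more is needed.
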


The proof follows from definitions.

\begin{thm}\label{thmBettiOfComposit}
Let $K$ be the complex on $m$ vertices and $K_1,\ldots,K_m$ be
simplicial complexes on $l_1,\ldots,l_m$ vertices. Let
$\ts_j=(t_{j1},\ldots,t_{jl_j})$ for $j\in[m]$ and $$\ts =
(t_{11},\ldots,t_{1l_1},\ldots,t_{m1},\ldots,t_{ml_m}) =
(\ts_1,\ldots,\ts_m).$$ Then
\begin{equation}\label{equatBetaOfCompMain}
\beta_{K(K_1,\ldots,K_m)}(s,\ts) =
\beta_{K}(s,s^{-1}\betar_{K_1}(s,\ts_1),s^{-1}\betar_{K_2}(s,\ts_2),\ldots,s^{-1}\betar_{K_m}(s,\ts_m)).
\end{equation}
\end{thm}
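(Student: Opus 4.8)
The plan is to combine the Hochster formula with the homotopy equivalence $K(\Ks)\simeq K\ast K_1\ast\ldots\ast K_m$ (Corollary~\ref{corComposIsJoin}) applied not to the whole complex but to full subcomplexes. The key structural input is Lemma~\ref{lemmaIterFullSub}, which says that a full subcomplex of $K(\Ks)$ on $A=A_1\sqcup\ldots\sqcup A_m$ is again a composition, namely $K|_\nu(K_{\alpha_1}|_{A_{\alpha_1}},\ldots,K_{\alpha_k}|_{A_{\alpha_k}})$ where $\nu=\{\alpha\mid A_\alpha\neq\varnothing\}$. So by Hochster, the coefficient of $\ts^A$ in $\beta_{K(\Ks)}(s,\ts)$ is controlled by $\Hr^*$ of this composed full subcomplex, and by Corollary~\ref{corComposIsJoin} that complex is homotopy equivalent to the join $K|_\nu\ast K_{\alpha_1}|_{A_{\alpha_1}}\ast\ldots\ast K_{\alpha_k}|_{A_{\alpha_k}}$.

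First I would fix $A=A_1\sqcup\ldots\sqcup A_m$ and $\nu$ as above, and use the reduced Künneth/join formula $\Hr^{d-1}(X\ast Y)\cong\bigoplus_{p+q=d}\Hr^{p-1}(X)\otimes\Hr^{q-1}(Y)$ (over a field, so no Tor terms) to expand $\Hr^{*}$ of the $(k+1)$-fold join $K|_\nu\ast\bigast_{\alpha\in\nu}K_\alpha|_{A_\alpha}$. Tracking the homological degree shift in Hochster's formula \eqref{equatHochster} — where $\beta^{-i,2A}(K(\Ks))=\dim\Hr^{|A|-i-1}$ — one sees that the join decomposition translates exactly into a product of generating functions, with the degree bookkeeping producing the powers of $s$. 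Concretely, a summand $\Hr^{|\nu|-i_0-1}(K|_\nu)\otimes\bigotimes_{\alpha\in\nu}\Hr^{|A_\alpha|-i_\alpha-1}(K_\alpha|_{A_\alpha})$ contributes to $\beta^{-i,2A}$ with $i=i_0+\sum_{\alpha\in\nu}i_\alpha-(|\nu|-1)$; the correction $-(|\nu|-1)$ is precisely what the factors $s^{-1}$ in front of each $\betar_{K_\alpha}$ in \eqref{equatBetaOfCompMain} account for (each of the $|\nu|$ nonempty blocks contributes one $s^{-1}$, and $\beta_K$ itself, evaluated at these arguments, supplies one extra $s^{|\nu|-1}$ via its monomial of degree $|\nu|$... wait, more carefully: the $|\nu|$ occurrences of $s^{-1}$ cancel against the $s$-degree coming from the monomial $\prod_{\alpha\in\nu}(\cdot)$ in $\beta_K$'s term of multidegree $\nu$).

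The cleanest way to carry this out is to compare coefficients of an arbitrary monomial $\ts^A$ on both sides of \eqref{equatBetaOfCompMain}. On the right-hand side, substituting $t_\alpha\mapsto s^{-1}\betar_{K_\alpha}(s,\ts_\alpha)$ into $\beta_K(s,t_1,\ldots,t_m)=\sum_{\nu\subseteq[m]}\beta^{-i_0,2\nu}(K)s^{i_0}\prod_{\alpha\in\nu}t_\alpha$ and expanding, the coefficient of $\ts^A$ is a sum over ways of writing each block's contribution, which by the definition of $\betar_{K_\alpha}$ (only nonempty $A_\alpha$ appear, matching $\nu=\{\alpha\mid A_\alpha\neq\varnothing\}$) matches the join decomposition above term by term. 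The use of the \emph{reduced} polynomial $\betar$ rather than $\beta$ is essential and must be justified: a block $\alpha\notin\nu$ has $A_\alpha=\varnothing$ and must contribute trivially (the factor $K_\alpha|_\varnothing=\varnothing$ in the join only shifts degree, which is absorbed into the normalization), so the constant term $1$ of $\beta_{K_\alpha}$ would overcount — stripping it off is what makes the substitution track $\nu$ correctly.

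The main obstacle I anticipate is the degree bookkeeping: getting the exponents of $s$ to match requires carefully reconciling three shifts — the $|A|-i-1$ in Hochster's formula, the $-1$ in the join formula for each of the $k+1$ pieces, and the $s^{-1}$ normalizations — and checking the edge cases $\nu=\varnothing$ (giving the constant $1$, i.e.\ $\beta^{0,2\varnothing}(K(\Ks))=1=\Hr^{-1}(\varnothing)$) and $A_\alpha=[l_\alpha]$ or $K_\alpha|_{A_\alpha}$ acyclic. A secondary, purely expository obstacle is stating the iterated reduced Künneth formula for an $(k+1)$-fold join cleanly enough that the induction on $k$ (or direct expansion) is transparent; this is routine over a field but the indexing is heavy, so I would prove it as a short sublemma on $\Hr_*$ of joins before the main computation.
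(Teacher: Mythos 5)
Your proposal follows exactly the paper's proof: Hochster's formula, Lemma~\ref{lemmaIterFullSub} for full subcomplexes, Corollary~\ref{corComposIsJoin} to pass to joins, the reduced K\"unneth formula for joins, and reassembly as a composition of generating functions. One correction: the index relation should be $i' = i_0 + \sum_{\alpha\in\nu} i_\alpha - |\nu|$, not $-(|\nu|-1)$; with $k=|\nu|$ the $(k+1)$-fold join shifts cohomological degree by $k$, and on the right-hand side the $k$ factors of $s^{-1}$ combine with $s^{i_0}\prod s^{i_\alpha}$ to give $s^{i_0+\sum i_\alpha - k}$, matching the paper's computation $i'=i-k+\sum i_s$.
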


\begin{proof}
Using Hochster formula \ref{equatHochster} we may write

\begin{multline}\label{equatBetti1}
\beta_{K(\Ks)}(s,\ts)=\sum\limits_{A\subseteq[l_1]\sqcup\ldots\sqcup[l_m]}
\sum\limits_{i'}\dim\Hr^{|A|-i'-1}(K(\Ks)_{A};\ko)s^{i'}\ts^{A}
=\\
\sum\limits_{A\subseteq[l_1]\sqcup\ldots\sqcup[l_m]}
\sum\limits_{i'}H_{i',A}s^{i'}\ts^{A},
\end{multline}
where $H_{i',A}$ denote the dimensions of cohomology groups. Any
subset $A\in[l_1]\sqcup\ldots\sqcup[l_m]$ is given by $A =
A_1\sqcup\ldots\sqcup A_k$ for some
$B=\{\alpha_1,\ldots,\alpha_k\}\subseteq[m]$ and $A_{1}\subseteq
[l_{\alpha_1}], \ldots, A_{k}\subseteq [l_{\alpha_k}]$ subject to
the condition $A_i\neq\varnothing$. The sum in \eqref{equatBetti1}
can be expanded

\begin{multline}\label{equatBetti2}
\sum\limits_{A\subseteq[l_1]\sqcup\ldots\sqcup[l_m]}
\sum\limits_{i'}H_{i',A}s^{i'}\ts^{A}
=\\
\sum\limits_{i'}\sum\limits_{B=\{\alpha_1,\ldots,\alpha_k\}\subseteq[m]}
\left(\sum\limits_{\substack{A_1\subseteq[l_{\alpha_1}]\\A_1\neq\varnothing}}
\ldots
\sum\limits_{\substack{A_k\subseteq[l_{\alpha_k}]\\A_1\neq\varnothing}}
H_{i',A} s^{i'}\ts_{\alpha_1}^{A_1}\ldots
\ts_{\alpha_k}^{A_k}\right).
\end{multline}

Quantities $H_{i',A}$ can be expressed using lemma
\ref{lemmaIterFullSub} and corollary \ref{corComposIsJoin}:

\begin{multline}\label{equatBetti3}
H_{i',A} = \dim\Hr^{|A|-i'-1}(K(\Ks)_{A};\ko)
=\\
\dim \Hr^{|A_1|+\ldots+|A_k|-i'-1}
(K|_B(K_{\alpha_1}|_{A_1},\ldots,K_{\alpha_k}|_{A_k});\ko)
=\\
\dim \Hr^{|A_1|+\ldots+|A_k|-i'-1} (K|_B\ast
K_{\alpha_1}|_{A_1}\ast\ldots\ast K_{\alpha_k}|_{A_k};\ko).
\end{multline}

The cohomology group of the join can be expanded

\begin{multline}\label{equatBetti4}
\dim \Hr^{|A_1|+\ldots+|A_k|-i'-1} (K|_B\ast
K_{\alpha_1}|_{A_1}\ast\ldots\ast K_{\alpha_k}|_{A_k};\ko)
= \\
\sum\limits_{\substack{r,r_1,\ldots,r_k\\r+r_1+\ldots+r_k=|A_1|+\ldots+|A_k|-i'-1-k}}
\dim\Hr^{r}(K|_B;\ko)\cdot\dim\Hr^{r_1}(K_{\alpha_1}|_{A_1};\ko)\cdot
\ldots\cdot \dim\Hr^{r_k}(K_{\alpha_k}|_{A_k};\ko).
\end{multline}

Consider indices $i,i_1,\ldots,i_k$ satisfying the identities
$r=k-i-1 = |B|-i-1$, $r_s=|A_s|-i_s-1$ for $s\in [k]$. Since
$r+\sum_{s\in[k]} r_s = \left(\sum_{s\in[k]} |A_s|\right)
-i'-1-k$, we get $i'=i-k+\sum_{s\in [k]} i_s$. Then

\begin{multline}\label{equatBetti5}
\sum\limits_{r,r_1,\ldots,r_k}
\dim\Hr^{r}(K|_B;\ko)\cdot\left(\prod\limits_{j=1}^k\dim\Hr^{r_j}(K_{\alpha_j}|_{A_j};\ko)\right)
s^{i'}\ts_{\alpha_1}^{A_1}\ldots \ts_{\alpha_k}^{A_k}
=\\
\sum\limits_{i_1,\ldots,i_k}\dim\Hr^{k-i-1}(K|_B;\ko)s^i
\prod\limits_{j=1}^k
\left(s^{-1}\dim\Hr^{|A_j|-i_j-1}(K_{\alpha_j}|_{A_j};\ko)
s^{i_j}\ts_{\alpha_j}^{A_j}\right).
\end{multline}

Therefore,

\begin{multline}\label{equatBetti6}
\sum\limits_{\substack{A_1\subseteq[l_{\alpha_1}]\\A_1\neq\varnothing}}
\ldots
\sum\limits_{\substack{A_k\subseteq[l_{\alpha_k}]\\A_k\neq\varnothing}}
\sum\limits_{i_1,\ldots,i_k}\prod\limits_{j=1}^k\left(s^{-1}\dim\Hr^{|A_j|-i_j-1}
(K_{\alpha_j}|_{A_j};\ko)s^{i_j}\ts_{\alpha_j}^{A_j}\right)
=\\
\prod\limits_{j=1}^k\left(\sum\limits_{\substack{A_j\subseteq[l_{\alpha_j}]\\A_j\neq\varnothing}}
\sum\limits_{i_j}s^{-1}\dim\Hr^{|A_j|-i_j-1}
(K_{\alpha_j}|_{A_j};\ko)s^{i_j}\ts_{\alpha_j}^{A_j}\right)
 = \\
\prod\limits_{j=1}^k\left(\sum\limits_{\substack{A_j\subseteq[l_{\alpha_j}]\\A_j\neq\varnothing}}
\sum\limits_{i_j}s^{-1}\beta^{-i_j,2A_j}(K_{\alpha_j})
s^{i_j}\ts_{\alpha_j}^{A_j}\right)
 = \prod\limits_{j=1}^k\left(s^{-1}\betar_{K_{\alpha_j}}(s,\ts_{\alpha_j})
\right).
\end{multline}

Substituting \eqref{equatBetti6} into \eqref{equatBetti2} we get

\begin{multline}
\beta_{K(\Ks)}(s,\ts)
=\\
\sum\limits_{i}\sum\limits_{B=\{\alpha_1,\cdots,\alpha_k\}\subseteq[m]}
\beta^{-i,2B}(K|_B)s^i
\prod\limits_{j=1}^k\left(s^{-1}\betar_{K_{\alpha_j}}(s,\ts_{\alpha_j})\right)
=\\
\beta_K(s,s^{-1}\betar_{K_1}(s,\ts_1),\ldots,s^{-1}\betar_{K_m}(s,\ts_m)).
\end{multline}
This finishes the proof.
\end{proof}

\begin{cor}\label{corBeta2ofKsGeneral}
$$\bet_{K(\Ks)}(s,t) = \beta(s^{-1},s\bett_{K_1}(s,t),\ldots,s\bett_{K_m}(s,t)).$$
\end{cor}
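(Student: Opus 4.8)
The plan is to derive Corollary \ref{corBeta2ofKsGeneral} as a direct specialization of Theorem \ref{thmBettiOfComposit}, namely by collapsing the multigraded variables $\ts = (\ts_1,\ldots,\ts_m)$ down to a single variable $t$ via the substitution $t_{ji}\mapsto t^2$ for all $i,j$, and simultaneously replacing $s$ by $s^{-1}$, exactly as dictated by the definitions $\bet_K(s,t) = \beta_K(s^{-1},t^2,\ldots,t^2)$ and $\bett_K(s,t) = \betar_K(s^{-1},t^2,\ldots,t^2)$ recorded just before Example \ref{examplBetaOfPartDelta}.

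First I would start from equation \eqref{equatBetaOfCompMain} and make the substitution $\ts_j \mapsto (t^2,\ldots,t^2)$ for each $j\in[m]$, which sends the inner polynomials $s^{-1}\betar_{K_j}(s,\ts_j)$ to $s^{-1}\betar_{K_j}(s,t^2,\ldots,t^2)$. Then I would replace the outer $s$ by $s^{-1}$; on the left-hand side this produces precisely $\beta_{K(\Ks)}(s^{-1},t^2,\ldots,t^2) = \bet_{K(\Ks)}(s,t)$ by definition. On the right-hand side, the outer $\beta_K$ is now evaluated at first argument $s^{-1}$, so it reads $\beta_K\!\bigl(s^{-1}, (s^{-1})^{-1}\betar_{K_1}(s^{-1},t^2,\ldots,t^2),\ldots\bigr) = \beta_K\!\bigl(s^{-1}, s\,\bett_{K_1}(s,t),\ldots,s\,\bett_{K_m}(s,t)\bigr)$, where I have used $(s^{-1})^{-1}=s$ for the prefactor and the definition $\betar_{K_j}(s^{-1},t^2,\ldots,t^2)=\bett_{K_j}(s,t)$ for each inner factor. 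This is exactly the claimed identity, writing $\beta$ for $\beta_K$ as in the statement.

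The only point requiring a moment of care — and the closest thing to an obstacle, though a mild one — is bookkeeping the interaction between the two substitutions $s\mapsto s^{-1}$ and $t_{ji}\mapsto t^2$, and in particular making sure the variable $s$ appearing inside $\betar_{K_j}$ in \eqref{equatBetaOfCompMain} gets the same treatment as the outer $s$ (it does, since \eqref{equatBetaOfCompMain} is a genuine polynomial identity in the single indeterminate $s$ shared by all terms), so that after the substitution every occurrence of $s$ has become $s^{-1}$ and the prefactors $s^{-1}$ correctly invert to $s$. Once that is checked, no further computation is needed: the corollary follows by substituting values into the established identity of Theorem \ref{thmBettiOfComposit}.

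\begin{proof}
Set $t_{ji}=t^2$ for all $j\in[m]$, $i\in[l_j]$ in \eqref{equatBetaOfCompMain}, and then replace $s$ by $s^{-1}$ throughout. By the definitions
$\bet_K(s,t)=\beta_K(s^{-1},t^2,\ldots,t^2)$ and $\bett_K(s,t)=\betar_K(s^{-1},t^2,\ldots,t^2)$, the left-hand side becomes $\bet_{K(\Ks)}(s,t)$, while each inner argument $s^{-1}\betar_{K_j}(s,\ts_j)$ of the outer $\beta_K$ becomes, after the two substitutions, $(s^{-1})^{-1}\betar_{K_j}(s^{-1},t^2,\ldots,t^2)=s\,\bett_{K_j}(s,t)$. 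Hence
$$
\bet_{K(\Ks)}(s,t)=\beta_K\bigl(s^{-1},s\,\bett_{K_1}(s,t),\ldots,s\,\bett_{K_m}(s,t)\bigr),
$$
which is the asserted formula.
\end{proof}
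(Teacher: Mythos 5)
Your proof is correct and follows exactly the approach the paper takes: substitute $s\mapsto s^{-1}$ and $t_{ji}\mapsto t^2$ in equation \eqref{equatBetaOfCompMain} and invoke the definitions of $\bet$ and $\bett$. You have merely spelled out the bookkeeping more explicitly than the paper's one-line proof.
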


\begin{proof}
Substitute $s^{-1}$ and $t^2$ for $s$ and $t_{ji_j}$ in
\eqref{equatBetaOfCompMain} and use the definition of a
two-parametric beta-polynomial.
\end{proof}

\begin{cor}
Let $P_1$ and $P_2$ be two convex polytopes and $P_1\ast P_2$ ---
their join. Let $\ts_i=(t_{i1},\ldots,t_{il_i})$ be formal
variables corresponding to facets of $P_i$ for $i=1,2$ and
$\ts=(\ts_1,\ts_2)$. Then $$\beta_{P_1\ast P_2}(s,\ts) =
1+s^{-1}\betar_{P_1}(s,\ts_1)\betar_{P_2}(s,\ts_2)$$
$$
\bet_{P_1\ast P_2}(s,t) = 1+s\bett_{P_1}(s,t)\bett_{P_2}(s,t)
$$
\end{cor}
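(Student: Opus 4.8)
The plan is to identify $P_1\ast P_2$ with a composition of polytopes and then invoke Theorem \ref{thmBettiOfComposit}. By Example \ref{examplDeltaPolyIsJoin} one has $P_1\ast P_2=\triangle_{[2]}(P_1,P_2)$. Since multigraded Betti numbers are a complete combinatorial invariant of $K_P$, and $K_P$ in turn determines the combinatorial type of $P$, I would first replace $P_1$ and $P_2$ by combinatorially equivalent natural stochastic polytopes; this changes neither the polynomials $\betar_{P_i},\bett_{P_i}$ nor the combinatorial type of $P_1\ast P_2$. Now $\triangle_{[2]}$ is itself a natural stochastic polytope, so Proposition \ref{propFunctorOperad} applies and yields $K_{P_1\ast P_2}=K_{\triangle_{[2]}}(K_{P_1},K_{P_2})$. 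As the segment $\triangle_{[2]}$ is simple, $K_{\triangle_{[2]}}=\partial\Delta_{[2]}$ (Example \ref{examplPsimpleKPsphere}), hence $K_{P_1\ast P_2}=\partial\Delta_{[2]}(K_{P_1},K_{P_2})$.

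Next I would substitute $K=\partial\Delta_{[2]}$, $m=2$, $K_1=K_{P_1}$, $K_2=K_{P_2}$ into Theorem \ref{thmBettiOfComposit}, which gives
$$\beta_{P_1\ast P_2}(s,\ts)=\beta_{\partial\Delta_{[2]}}\bigl(s,\;s^{-1}\betar_{P_1}(s,\ts_1),\;s^{-1}\betar_{P_2}(s,\ts_2)\bigr),$$
using $\betar_{P_i}=\betar_{K_{P_i}}$. By Example \ref{examplBetaOfPartDelta} one has $\beta_{\partial\Delta_{[2]}}(s,x_1,x_2)=1+sx_1x_2$, so plugging in the two arguments makes the prefactor $s$ cancel exactly one of the two factors $s^{-1}$, and this is precisely the first claimed formula $\beta_{P_1\ast P_2}(s,\ts)=1+s^{-1}\betar_{P_1}(s,\ts_1)\betar_{P_2}(s,\ts_2)$.

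For the two-parametric statement I would specialise the identity just obtained by putting $s^{-1}$ in place of $s$ and $t^2$ in place of every variable $t_{1i},t_{2j}$ — equivalently, this is Corollary \ref{corBeta2ofKsGeneral} applied with $K=\partial\Delta_{[2]}$. Unravelling the definitions $\bet_{P_i}(s,t)=\beta_{P_i}(s^{-1},t^2,\ldots,t^2)$ and $\bett_{P_i}(s,t)=\betar_{P_i}(s^{-1},t^2,\ldots,t^2)$, the first formula becomes $\bet_{P_1\ast P_2}(s,t)=1+s\,\bett_{P_1}(s,t)\,\bett_{P_2}(s,t)$.

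All of the arithmetic is routine once $K_{P_1\ast P_2}$ is correctly identified, so the only step asking for a moment of care is the reduction to natural stochastic representatives that legitimises the use of Proposition \ref{propFunctorOperad}. If one prefers to bypass it, the identification $K_{P_1\ast P_2}=\partial\Delta_{[2]}(K_{P_1},K_{P_2})$ can be verified directly: a subset $I_1\sqcup I_2$ of the facets of $P_1\ast P_2$ has nonempty intersection precisely when $\bigl(\bigcap_{i\in I_1}\F_i\bigr)\ast\bigl(\bigcap_{j\in I_2}\F_j\bigr)\neq\varnothing$, i.e.\ when $I_1\in K_{P_1}$ or $I_2\in K_{P_2}$, which is exactly the defining condition for membership in $\partial\Delta_{[2]}(K_{P_1},K_{P_2})$.
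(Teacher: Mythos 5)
Your argument matches the paper's proof step for step: reduce to $P_1\ast P_2=\triangle_{[2]}(P_1,P_2)$, apply Proposition \ref{propFunctorOperad} to get $K_{P_1\ast P_2}=\partial\Delta_{[2]}(K_{P_1},K_{P_2})$, then plug into Theorem \ref{thmBettiOfComposit} and specialize variables for the two-parametric version. Your additional care about passing to natural stochastic representatives before invoking Proposition \ref{propFunctorOperad}, and the direct verification of $K_{P_1\ast P_2}=\partial\Delta_{[2]}(K_{P_1},K_{P_2})$ via the nerve description, are sound refinements the paper leaves implicit.
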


\begin{proof}
By example \ref{examplDeltaPolyIsJoin} we have $P_1\ast P_2 =
\triangle_{[2]}(P_1,P_2)$. By proposition \ref{propFunctorOperad}
$K_{\triangle_{[2]}(P_1,P_2)} =
K_{\triangle_{[2]}}(K_{P_1},K_{P_2}) =
\partial\Delta_{[2]}(K_{P_1},K_{P_2})$. Then by definition
\begin{multline}
\beta_{P_1\ast P_2}(s,\ts) =
\beta_{\triangle_{[2]}(P_1,P_2)}(s,\ts) =
\beta_{\partial\Delta_{[2]}}(s,s^{-1}\betar_{P_1}(s,\ts_1),s^{-1},\betar_{P_2}(s,\ts_2))
=\\
1+s^{-1}\betar_{P_1}(s,\ts_1)\betar_{P_2}(s,\ts_2).\end{multline}
Substituting $s^{-1}$ for $s$ and $t^2$ for each $t_{rj}$ gives
the second expression of the corollary. See \cite{AB} for an
independent proof of this statement.
\end{proof}

\begin{cor}\label{corBeta2ofKls}
Let $K$ be a simplicial complex on $m$ vertices and
$(l_1,\ldots,l_m)$ --- an array of nonnegative integers. Then
$$
\bet_{K(l_1,\ldots,l_m)}(s,t) =
\beta_K(s^{-1},t^{2l_1},t^{2l_2},\ldots,t^{2l_m})
$$
In particular, if $l_1=l_2=\ldots=l_m=l$ we have
$$
\bet_{K(\underline{l})}(s,t) = \bet_K(s,t^l).
$$
\end{cor}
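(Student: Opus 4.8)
The plan is to derive Corollary \ref{corBeta2ofKls} directly from Theorem \ref{thmBettiOfComposit} by specializing the complexes $K_i$ to the boundaries of simplices $\partial\Delta_{[l_i]}$ and then passing to the two-parametric beta-polynomials. First I would recall that, by Example \ref{examplSimplicialWedge}, $K(l_1,\ldots,l_m) = K(\partial\Delta_{[l_1]},\ldots,\partial\Delta_{[l_m]})$, so formula \eqref{equatBetaOfCompMain} applies with $K_i=\partial\Delta_{[l_i]}$. Then I would invoke Example \ref{examplBetaOfPartDelta}, which gives $\beta_{\partial\Delta_{[l_i]}}(s,\ts_i)=1+s\,t_{i1}t_{i2}\cdots t_{il_i}$, hence $\betar_{\partial\Delta_{[l_i]}}(s,\ts_i)=s\,t_{i1}\cdots t_{il_i}$ and $s^{-1}\betar_{\partial\Delta_{[l_i]}}(s,\ts_i)=t_{i1}\cdots t_{il_i}$.

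Substituting these into \eqref{equatBetaOfCompMain} yields
\begin{equation*}
\beta_{K(l_1,\ldots,l_m)}(s,\ts) = \beta_K\bigl(s,\ t_{11}\cdots t_{1l_1},\ \ldots,\ t_{m1}\cdots t_{ml_m}\bigr).
\end{equation*}
Next I would pass to the two-parametric polynomial: by the definition $\bet_{K(\Ks)}(s,t) = \beta_{K(\Ks)}(s^{-1},t^2,\ldots,t^2)$, so I set every variable $t_{ij}$ equal to $t^2$ and replace $s$ by $s^{-1}$. Under this substitution the $i$-th argument $t_{i1}\cdots t_{il_i}$ of $\beta_K$ becomes $(t^2)^{l_i}=t^{2l_i}$, while the $s$-argument becomes $s^{-1}$. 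This gives exactly
\begin{equation*}
\bet_{K(l_1,\ldots,l_m)}(s,t) = \beta_K(s^{-1},t^{2l_1},t^{2l_2},\ldots,t^{2l_m}),
\end{equation*}
which is the first assertion. For the special case $l_1=\cdots=l_m=l$, all arguments equal $t^{2l}$, so $\beta_K(s^{-1},t^{2l},\ldots,t^{2l}) = \bet_K(s,t^l)$ by the definition of $\bet_K$ (namely $\bet_K(s,u)=\beta_K(s^{-1},u^2,\ldots,u^2)$ evaluated at $u=t^l$). This yields the ``in particular'' claim.

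There is essentially no serious obstacle here: the corollary is a formal consequence of the main theorem together with the explicit computation of $\beta_{\partial\Delta_{[l_i]}}$. The only points requiring a little care are bookkeeping ones: correctly tracking the $s$ versus $s^{-1}$ substitution in the passage from $\beta$ to $\bet$, and making sure the product of $l_i$ copies of $t^2$ collapses to $t^{2l_i}$ as claimed. I would also note explicitly that the formula is consistent with the degenerate cases — for $l_i=1$ we recover $\partial\Delta_{[1]}=o^1$ and $K(1,\ldots,1)=K$, and indeed the right-hand side becomes $\beta_K(s^{-1},t^2,\ldots,t^2)=\bet_K(s,t)$ — which serves as a sanity check rather than a step of the proof.
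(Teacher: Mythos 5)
Your proof is correct and follows essentially the same route as the paper's: specialize Theorem \ref{thmBettiOfComposit} to $K_i=\partial\Delta_{[l_i]}$, plug in $\betar_{\partial\Delta_{[l_i]}}(s,\ts_i)=st_{i1}\cdots t_{il_i}$ from Example \ref{examplBetaOfPartDelta}, and then substitute $s\mapsto s^{-1}$, $t_{ij}\mapsto t^2$ to obtain the two-parametric polynomial. The paper's proof is exactly this computation, so your argument matches it step for step.
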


\begin{proof}
By definition $K(l_1,\ldots,l_m) =
K(\partial\Delta_{[l_1]},\ldots,\partial\Delta_{[l_m]})$ and
$\betar_{\partial\Delta_{[l_r]}}(s,\ts_r) = st_{r1}\ldots
t_{rl_r}$ (example \ref{examplBetaOfPartDelta}). Then
\begin{multline*}
\beta_{K(l_1,\ldots,l_m)}(s,\ts) = \beta_K(s,s^{-1}st_{11}\ldots
t_{1l_1},\ldots,s^{-1}st_{m1}\ldots t_{ml_m}) =\\
\beta_K(s,t_{11}\cdot\ldots\cdot
t_{1l_1},\ldots,t_{m1}\cdot\ldots\cdot t_{ml_m}).
\end{multline*}
Substituting $s^{-1}$ for $s$ and $t^2$ for $t_{rj}$ gives the
required formula.
\end{proof}

\ex Consider the case $o^m(K_1,\ldots,K_m) = K_1\ast\ldots\ast
K_m$. Using theorem \ref{thmBettiOfComposit} and relation
\eqref{equatBetaOfO} we get
\begin{multline}
\beta_{K_1\ast\ldots\ast K_m}(s,\ts) = \beta_{K_1\ast\ldots\ast
K_m}(s,\ts) =\\
(1+s\cdot s^{-1}\betar_{K_1}(s,\ts_1))\cdot\ldots\cdot(1+s\cdot
s^{-1}\betar_{K_m}(s,\ts_m)) =
\beta_{K_1}(s,\ts_1)\cdot\ldots\cdot\beta_{K_m}(s,\ts_m).
\end{multline}
This result can be proved directly by the isomorphism
$$
\ko[K_1\ast\ldots\ast K_m] \cong
\ko[K_1]\otimes\ldots\otimes\ko[K_m].
$$
and the definition of multigraded Betti numbers.

\section{Enumerative polynomials}\label{sectEnumPolynomials}

Let $K$ be a simplicial complex. For each $i\geqslant 0$ define a
number $f_{i} = |\{I\in K\mid|I|=i\}|$. The polynomial
$$
f_K(t) = \sum\limits_if_it^i = \sum\limits_{I\in K}t^{|I|}
$$
is called an $f$-polynomial of $K$. If $\dim K = n-1$, then $\deg
f_K(t) = n$. The $h$-numbers $h_i$ are defined by the relation
$$
h_0t^n+\ldots+h_{n-1}t+h_n =
f_0(t-1)^n+f_1(t-1)^{n-1}+\ldots+f_{n}.
$$
The polynomial $h_K(t)=h_0+h_1t+\ldots+h_nt^n$ is called the
$h$-polynomial of the complex $K$. Writing the defining relations
for $h_i$ we have
\begin{equation}\label{equatFtoHpoly}
h_K(t) = (1-t)^nf_K\left(\dfrac{t}{1-t}\right).
\end{equation}
Since the relation \eqref{equatFtoHpoly} is invertible, $h$-and
$f$-polynomials carry the same combinatorial information. The
$h$-polynomial is connected to Hilbert--Poincare series of the
algebra $\ko[K]$ with $\Zo$-grading by the formula
\cite{St2},\cite{BP}:
\begin{equation}\label{equatHilbStanReisRing}
\Hilb(\ko[K];t) = \frac{h_K(t^2)}{(1-t^2)^n}.
\end{equation}

There is a formula which connects $h$-polynomial of $K$ with
bigraded Betti numbers. Let $\chi_j(K) =
\sum_{i=0}^m(-1)^i\beta^{-i,2j}(K)$ and
$\chi_K(t)=\sum_{j=0}^m\chi_j(K)t^{2j}$. Then by \cite[Theorem
7.15]{BP}
\begin{equation}\label{equatHtoChiConnection}
\chi_K(t) = (1-t^2)^{m-n}h_K(t^2) = (1-t^2)^m\Hilb(\ko[K];t).
\end{equation}

Since $\chi_K(t) = \bet_K(-1,t)$ we get a simple formula

\begin{equation}\label{equatHtoBetaConnection}
\bet(-1,t) = (1-t^2)^{m-n}h_K(t^2).
\end{equation}

Equation \ref{equatHtoBetaConnection} allows to express the
$h$-polynomial of $lK = K(l,\ldots,l)$ in terms of the
$h$-polynomial of $K$.

\begin{prop}\label{propHofKlll}
Let $K$ be $(n-1)$-dimensional complex on $m$ vertices, $l>0$ and
$lK = K(l,l,\ldots,l) =
K(\partial\Delta_{[l]},\ldots,\partial\Delta_{[l]})$. Then
$$
h_{lK}(t) = (1+t+\ldots+t^{l-1})^{m-n}h_K(t^l)
$$
\end{prop}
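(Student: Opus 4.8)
The plan is to derive the formula by combining Corollary \ref{corBeta2ofKls} with the relation \eqref{equatHtoBetaConnection} between the two-parametric beta-polynomial and the $h$-polynomial. Before doing so I would record the two pieces of bookkeeping about $lK = K(\underline{l}) = K(\partial\Delta_{[l]},\ldots,\partial\Delta_{[l]})$ that the argument needs: it has $ml$ vertices, and $\dim(lK) = (l-1)m + n - 1$. The vertex count is immediate from the definition of the composition. For the dimension, a simplex of $lK$ is of the form $I_1\sqcup\ldots\sqcup I_m$ with $\{i\mid I_i=[l]\}\in K$, so to maximize $\sum_i|I_i|$ one chooses a simplex $I\in K$ of maximal cardinality $n$, sets $I_i=[l]$ for $i\in I$ and takes an $(l-1)$-subset $I_i\subsetneq[l]$ for $i\notin I$; this gives $ln+(l-1)(m-n)=(l-1)m+n$, and no choice does better since any admissible $I$ has $|I|\leqslant n$. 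Writing $N=\dim(lK)+1$, it follows that the quantity ``number of vertices minus $N$'' for $lK$ equals $ml - ((l-1)m+n) = m-n$.

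Next I would specialize Corollary \ref{corBeta2ofKls} (the case $l_1=\ldots=l_m=l$) at $s=-1$, obtaining $\bet_{lK}(-1,t)=\bet_K(-1,t^l)$. Applying \eqref{equatHtoBetaConnection} to $K$ and then substituting $t\mapsto t^l$ gives $\bet_K(-1,t^l)=(1-t^{2l})^{m-n}h_K(t^{2l})$, while applying \eqref{equatHtoBetaConnection} to $lK$ and using the identity ``vertices minus $N$ equals $m-n$'' from the previous step gives $\bet_{lK}(-1,t)=(1-t^2)^{m-n}h_{lK}(t^2)$. Equating the two computations of $\bet_{lK}(-1,t)$ yields $(1-t^2)^{m-n}h_{lK}(t^2)=(1-t^{2l})^{m-n}h_K(t^{2l})$.

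Finally I would use the factorization $1-t^{2l}=(1-t^2)(1+t^2+\ldots+t^{2l-2})$ to rewrite the right-hand side as $(1-t^2)^{m-n}(1+t^2+\ldots+t^{2l-2})^{m-n}h_K(t^{2l})$, cancel the common factor $(1-t^2)^{m-n}$ (legitimate in the polynomial ring, which is an integral domain, and $m-n\geqslant 0$ since a complex of dimension $n-1$ has at least $n$ vertices), and substitute $t^2\mapsto t$ into the resulting identity $h_{lK}(t^2)=(1+t^2+\ldots+t^{2l-2})^{m-n}h_K(t^{2l})$ to obtain $h_{lK}(t)=(1+t+\ldots+t^{l-1})^{m-n}h_K(t^l)$. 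The only point genuinely requiring care is the dimension count for $lK$, which is what guarantees the exponents match and the cancellation is valid; the rest is a direct manipulation of already-established generating-function identities.
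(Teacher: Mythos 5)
Your proposal is correct and follows essentially the same route as the paper's proof: specialize Corollary \ref{corBeta2ofKls} at $s=-1$, use relation \eqref{equatHtoBetaConnection} for both $K$ and $lK$, verify $m'-n'=m-n$ via the dimension count $n'=nl+(m-n)(l-1)=(l-1)m+n$, and cancel the common factor. The only difference is presentational: you justify the dimension count and the cancellation more explicitly than the paper does.
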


\begin{proof}
The complex $lK$ has $m'=ml$ vertices. It can be seen that
$n'=\dim lK+1 = nl+(m-n)(l-1)$. Then $m'-n' = m-n$. By relation
\eqref{equatHtoBetaConnection} $\bet_{lK}(-1,t) =
(1-t^2)^{m'-n'}h_{lK}(t^2)$. On the other hand, by corollary
\ref{corBeta2ofKls} $\bet_{lK}(s,t) = \bet_K(s,t^l)$, therefore
$\bet_{lK}(-1,t)=\bet_K(-1,t^l)$. This gives a sequence of
equalities:
$$
(1-t^2)^{m-n}h_{lK}(t^2) = (1-t^2)^{m'-n'}h_{lK}(t^2) =
\bet_{lK}(-1,t) = \bet_K(-1,t^l) = (1-t^{2l})^{m-n}h_K(t^{2l}).
$$

Therefore, $h_{lK}(t^2) =
\left(\frac{1-t^{2l}}{1-t^2}\right)^{m-n}h_K(t^{2l}) =
(1+t^2+\ldots+t^{2(l-1)})^{m-n}h_K(t^{2l})$.
\end{proof}

In particular for $l=2$ this gives
$h_{2K}(t)=(1+t)^{m-n}h_K(t^2)$. This result is proved in
\cite{Ust} by another method.

\rem The result of proposition \ref{propHofKlll} can be proved
independently using formula \eqref{equatHilbStanReisRing} by
studying the structure of the ring $\ko[lK]$ (see \cite{BBCGit}
for details).
\\
\\
It is convenient to introduce another polynomial $q_K(t)$ while
working with the composition of simplicial complexes. For an
$(n-1)$-dimensional complex $K$ with $m$ vertices let $$q_K(t) =
1-(1-t)^{m-n}h_K(t).$$

\ex It is known that $h_{\partial\Delta_{[m]}}(t) =
1+t+\ldots+t^{m-1}$. Then $q_{\partial\Delta_{[m]}}(t) = t^m$.
\\
\\
We have a formula
\begin{equation}\label{equatQtoBeta}
\bett_K(-1,t) = \bet_K(-1,t)-1 = (1-t^2)^{m-n}h_K(t)-1 =
-q_K(t^2).
\end{equation}

Also we have
\begin{equation}\label{equatQtoMultuBeta}
\betar_{K}(-1,t,\ldots,t)=-q_K(t)
\end{equation}

\begin{prop}
Consider arbitrary simplicial complexes $K_1,\ldots,K_m$. Then
$$
q_{\partial\Delta_{[m]}(K_1,\ldots,K_m)} =
q_{K_1}(t)\cdot\ldots\cdot q_{K_m}(t)
$$
\end{prop}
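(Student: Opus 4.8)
The plan is to reduce the statement to an identity between reduced multigraded beta-polynomials, using the dictionary \eqref{equatQtoMultuBeta}, namely $\betar_L(-1,t,\ldots,t)=-q_L(t)$, which is valid for any simplicial complex $L$. Thus it suffices to compute the reduced beta-polynomial of $\partial\Delta_{[m]}(K_1,\ldots,K_m)$, specialize $s=-1$ and set all the formal variables equal to a single variable $t$, and compare the result with the product of the corresponding specializations for the $K_i$.

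First I would recall from Example \ref{examplBetaOfPartDelta} that $\beta_{\partial\Delta_{[m]}}(s,\ts)=1+st_1\ldots t_m$, so as a function of $m$ arguments $u_1,\ldots,u_m$ one has $\beta_{\partial\Delta_{[m]}}(s;u_1,\ldots,u_m)=1+su_1\ldots u_m$. Substituting $u_i=s^{-1}\betar_{K_i}(s,\ts_i)$ into the composition formula of Theorem \ref{thmBettiOfComposit} yields
$$\beta_{\partial\Delta_{[m]}(K_1,\ldots,K_m)}(s,\ts)=1+s\prod_{i=1}^m\bigl(s^{-1}\betar_{K_i}(s,\ts_i)\bigr)=1+s^{1-m}\prod_{i=1}^m\betar_{K_i}(s,\ts_i),$$
and hence $\betar_{\partial\Delta_{[m]}(K_1,\ldots,K_m)}(s,\ts)=s^{1-m}\prod_{i=1}^m\betar_{K_i}(s,\ts_i)$.

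Next I would set $s=-1$ and put every variable $t_{ij}$ equal to a single variable $t$; applying \eqref{equatQtoMultuBeta} to the left-hand side and to each factor on the right gives $-q_{\partial\Delta_{[m]}(K_1,\ldots,K_m)}(t)=(-1)^{1-m}\prod_{i=1}^m\bigl(-q_{K_i}(t)\bigr)$. The only point requiring care is the sign (and power-of-$s$) bookkeeping: the factor $(-1)^{1-m}$ coming from $s^{1-m}$ multiplies the $m$ minus signs supplied by the factors $-q_{K_i}(t)$, producing $(-1)^{1-m}(-1)^m=-1$, which cancels the minus sign on the left. Dividing by $-1$ gives exactly $q_{\partial\Delta_{[m]}(K_1,\ldots,K_m)}(t)=q_{K_1}(t)\ldots q_{K_m}(t)$. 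I do not expect any genuine obstacle here: the substance is entirely contained in Theorem \ref{thmBettiOfComposit} together with the translation \eqref{equatQtoMultuBeta} between $q$-polynomials and reduced beta-polynomials, and what remains is the elementary sign count just described.
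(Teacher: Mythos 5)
Your proof is correct and follows essentially the same route as the paper's: both rest on the composition formula for beta-polynomials together with the $q$-polynomial dictionary at $s=-1$, plus Example \ref{examplBetaOfPartDelta}. The only cosmetic difference is that you invoke Theorem \ref{thmBettiOfComposit} and the multigraded identity \eqref{equatQtoMultuBeta} directly, while the paper cites the already-specialized Corollary \ref{corBeta2ofKsGeneral} and the bigraded identity \eqref{equatQtoBeta}; these are the same computation after the substitution $s\mapsto s^{-1}$, $t_{ij}\mapsto t^2$, and your sign bookkeeping is right.
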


\begin{proof}
%? Add this statement
By corollary \ref{corBeta2ofKsGeneral}
$\bett_{\partial\Delta_{[m]}(\Ks)}(s,t) =
\betar_{\partial\Delta_{[m]}}(s^{-1},s\bett_{K_1}(s,t),\ldots,s\bett_{K_m}(s,t))
=
s^{-1}\cdot(s\bett_{K_1}(s,t))\cdot\ldots\cdot(s\bett_{K_m}(s,t))$.
Substituting $s=-1$ and using formula \eqref{equatQtoBeta} gives
the required relation.
\end{proof}

\begin{prop}
For any nonempty complexes $K$ and $L$ there holds
$$
q_{K(L,\ldots,L)}(t) = q_{K}(q_L(t)).
$$
\end{prop}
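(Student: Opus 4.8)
The plan is to specialize the general composition formula for two‑parametric beta‑polynomials (Corollary \ref{corBeta2ofKsGeneral}) to the case in which all inner complexes coincide with $L$, and then evaluate at $s=-1$, where the two‑parametric beta‑polynomial records exactly the polynomial $q$ via \eqref{equatQtoBeta} and \eqref{equatQtoMultuBeta}.

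First I would apply Corollary \ref{corBeta2ofKsGeneral} with $K_1=\cdots=K_m=L$ (all nonempty, so the corollary applies). Since every argument becomes the same, this reads
$$\bet_{K(L,\ldots,L)}(s,t)=\beta_K\bigl(s^{-1},\,s\bett_L(s,t),\ldots,s\bett_L(s,t)\bigr).$$
Next I would set $s=-1$. Since $(-1)^{-1}=-1$, this substitution is legitimate, and by \eqref{equatQtoBeta} we have $\bett_L(-1,t)=-q_L(t^2)$, hence $-\bett_L(-1,t)=q_L(t^2)$; therefore the right‑hand side becomes $\beta_K\bigl(-1,q_L(t^2),\ldots,q_L(t^2)\bigr)$. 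By \eqref{equatQtoMultuBeta}, $\betar_K(-1,u,\ldots,u)=-q_K(u)$, i.e. $\beta_K(-1,u,\ldots,u)=1-q_K(u)$; applying this with $u=q_L(t^2)$ gives $\bet_{K(L,\ldots,L)}(-1,t)=1-q_K\bigl(q_L(t^2)\bigr)$.

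On the other hand, applying \eqref{equatQtoBeta} directly to the complex $K(L,\ldots,L)$ yields $\bet_{K(L,\ldots,L)}(-1,t)=1+\bett_{K(L,\ldots,L)}(-1,t)=1-q_{K(L,\ldots,L)}(t^2)$. Comparing the two expressions gives $q_{K(L,\ldots,L)}(t^2)=q_K\bigl(q_L(t^2)\bigr)$ as an identity of polynomials in $t$, and since it holds for all $t$ I may replace $t^2$ by $t$ to conclude. There is no genuine obstacle here beyond bookkeeping: the only points needing a moment's care are that $s=-1$ is a fixed point of $s\mapsto s^{-1}$, and that $K(L,\ldots,L)$ has a well‑defined $q$‑polynomial — but the latter is automatic, because \eqref{equatQtoBeta} expresses $q$ of an arbitrary complex through its (always defined) two‑parametric beta‑polynomial, so no separate computation of the vertex number or dimension of $K(L,\ldots,L)$ is required.
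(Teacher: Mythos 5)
Your proof is correct and follows essentially the same route as the paper: apply Corollary \ref{corBeta2ofKsGeneral} with every inner argument equal to $L$, evaluate at $s=-1$, and translate via \eqref{equatQtoBeta} and \eqref{equatQtoMultuBeta} to $q$-polynomials, concluding by replacing $t^2$ with $t$. The only cosmetic difference is that the paper works directly with the reduced polynomials $\bett$ and $\betar$, whereas you carry the constant term explicitly in $\bet$ and $\beta$; the bookkeeping is equivalent.
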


\begin{proof}
By corollary \ref{corBeta2ofKsGeneral} $\bett_{K(L,\ldots,L)}(s,t)
= \betar_K(s^{-1}, s\bett_L(s,t),\ldots, s\bett_L(s,t))$.
Substituting $s=-1$ gives
$$
-q_{K(L,\ldots,L)}(t^2) = \betar_K(-1,q_L(t^2),\ldots,q_L(t^2)) =
-q_K(q_L(t^2))
$$
which was to be proved.
\end{proof}

\end{document}